\let\oldtocsection=\tocsection
\let\oldtocsubsection=\tocsubsection
\let\oldtocsubsubsection=\tocsubsubsection
\renewcommand{\tocsection}[2]{\hspace{0em}\oldtocsection{#1}{#2}}
\renewcommand{\tocsubsection}[2]{\hspace{1em}\oldtocsubsection{#1}{#2}}
\renewcommand{\tocsubsubsection}[2]{\hspace{2em}\oldtocsubsubsection{#1}{#2}}
\definecolor{pastelpink}{RGB}{233,175,221}
\definecolor{barneypurple}{RGB}{153,85,255}
\definecolor{oceanblue}{RGB}{0,0,255}
\theoremstyle{plain}
\newtheorem{theorem}{Theorem}[section]
\newtheorem{lemma}[theorem]{Lemma}
\newtheorem{corollary}[theorem]{Corollary}
\newtheorem{proposition}[theorem]{Proposition}
\newtheorem{claim}[theorem]{Claim}
\newtheorem*{claime}{Claim}
\newtheorem{MAINTHM}{Theorem}
\Crefname{claim}{Claim}{Claims}
\Crefname{lemma}{Lemma}{Lemmas}
\Crefname{winninglist}{List}{Lists}
\newtheorem*{claim*}{Claim}
\theoremstyle{definition}
\newtheorem{definition}[theorem]{Definition}
\crefname{convention}{Convention}{Conventions}
\newcommand{\G}{\Gamma}
\newcommand{\R}{\mathbb{R}}
\newcommand{\Z}{\mathbb{Z}}
\newcommand{\Q}{\mathbb{Q}}
\newcommand{\bbP}{\mathbb{P}}
\newcommand{\bbF}{\mathbb{F}}
\newcommand{\calC}{\mathcal{C}}
\newcommand{\cL}{\mathcal{L}}
\newcommand{\cR}{\mathcal{R}}
\newcommand{\cP}{\mathcal{P}}
\newcommand{\cQ}{\mathcal{Q}}
\newcommand{\calF}{\mathcal{F}}
\newcommand{\calG}{\mathcal{G}}
\newcommand{\calM}{\mathcal{M}}
\DeclareMathOperator{\diam}{diam}
\DeclareMathOperator{\diag}{diag}
\DeclareMathOperator{\rk}{rk}
\DeclareMathOperator{\Out}{Out}
\DeclareMathOperator{\Curr}{Curr}
\DeclareMathOperator{\vol}{vol}
\renewcommand{\epsilon}{\varepsilon}
\newcommand{\eps}{\epsilon}
\newcommand{\ov}[1]{\overline{#1}}
\newcommand{\wt}[1]{\widetilde{#1}}
\newcommand{\ph}[1]{\phantom{#1}}
\newcommand{\rvline}{\hspace*{-\arraycolsep}\vline\hspace*{-\arraycolsep}}
\title{Nonunique Ergodicity on the Boundary of Outer Space}
\author[Bestvina, Field, and Kwak]{Mladen Bestvina, Elizabeth Field and Sanghoon Kwak}
\date{\today}
\begin{document}

\begin{abstract}
  To an $\R$-tree in the boundary of Outer space, we associate two simplices:
  the simplex of projective length measures, and the simplex of projective dual
  currents. For both kinds of simplices, we estimate the dimension of maximal
  simplices for arational $\mathbb{R}$-trees in the boundary of Outer
  space.
\end{abstract}

\maketitle

\addtocontents{toc}{\protect\setcounter{tocdepth}{0}}

\section*{Introduction}

Given a complete hyperbolic surface $S$ of finite area and a fixed geodesic
lamination $\Lambda$ on $S$, one can  consider the space of measures transverse
to $\Lambda$. This space of measures naturally has the structure of a simplicial
cone, and so, when considered projectively, sits as a simplex inside the
boundary of the Teichm\"{u}ller space of the surface, $\mathcal{PML}(S)$. The
vertices of this simplex are exactly the projective classes of ergodic measures
which are transverse to $\Lambda$. The most interesting case to consider is when
$\Lambda$ is a \emph{filling} geodesic lamination. In \cite{gabai2009almost},
Gabai shows that given $k$ nonisotopic pairwise disjoint simple closed curves on $S$, one
can construct a filling geodesic lamination on $S$ which admits $k$ distinct
projective classes of ergodic measures.
On the other hand, Levitt~\cite{levitt1983} shows that the number of pants
curves on $S$ gives the upper bound of the number of ergodic measures supported on noncompact
leaves of a foliation on $S$ using a cohomological argument.
Moreover, Lenzhen and Masur
\cite{LM2010criteria} show that any filling geodesic lamination on $S$ which
admits $k$ projective classes of ergodic
measures can be approximated by a sequence of multicurves with $k$ simple closed
curve components.
It then follows that the maximal number of distinct projective classes of
ergodic measures transverse to a filling geodesic lamination on $S$ is precisely
the number of curves in a pants decomposition of $S$.

In this paper, we investigate the corresponding question for trees in the
boundary of Culler--Vogtmann's Outer space, $CV_n$ \cite{culler1986moduli}. The
boundary of $CV_n$ consists of the set of \textit{very small $F_n$-trees} which
are either not free or not discrete. 
For such $F_n$-trees, the most interesting case to
consider is when the tree is \textit{arational}, meaning the restriction of the
$F_n$ action to any proper free factor is free and discrete. On the other hand,
the $F_n$-trees come equipped with a natural length measure.
Projective classes of length measures on an $F_n$-tree form a simplex in
$\partial CV_n$, with vertices of this simplex corresponding to ergodic ones.
When the simplex is a point, we say $T$ is \textbf{uniquely ergometric}.
The first examples of nonuniquely ergometric arational trees, not dual to a
measured lamination on a surface, were constructed by Martin \cite{martin1997nonuniquely}.
The first systematic study on the nonunique ergometricity of $F_n$-trees
was done by Guirardel \cite[Corollary 5.4]{guirardel2000dynamics}.
We estimate the maximum nonunique ergometricity for arational trees.
\begin{MAINTHM}\label{thm:main}
  For $n \ge 5$, the maximal number of projective classes of ergodic length functions
  on an arational $F_n$-tree is in the interval $[2n-6,2n-1]$.
\end{MAINTHM}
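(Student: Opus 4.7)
The plan is to attack the two bounds by independent techniques: the upper bound by an index-type argument limiting the dimension of the simplex of projective length measures, and the lower bound by an explicit construction of a single arational tree carrying many ergodic length measures. Since the ergodic classes are the vertices of the simplex $\Delta(T)$ of projective length measures on $T$, the number of ergodic classes is at most $\dim\Delta(T)+1$, so the upper bound reduces to $\dim\Delta(T)\le 2n-2$, while the lower bound reduces to exhibiting an arational $T$ with $\dim\Delta(T)\ge 2n-7$.

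For the upper bound, I would split according to Reynolds' trichotomy for arational trees. If $T$ is of surface type, dual to a filling lamination on a surface $\Sigma$ with $\pi_1(\Sigma)=F_n$, then the ergodic count is bounded by the size of a pants decomposition of $\Sigma$ via the Lenzhen--Masur result quoted in the introduction; a quick case check on the topological type of $\Sigma$ shows this is at most $2n-1$. Otherwise $T$ has dense orbits and trivial arc stabilizers, and the Rips machine produces a system of bands (a band complex $X$ with $\pi_1(X)\cong F_n$) whose minimal components carry the transverse invariant measures on $T$. The Gaboriau--Levitt geometric index satisfies $i(T)\le 2n-2$, and arationality rules out the reductions to proper free factors that would otherwise enlarge the bound. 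The dimension of the space of invariant transverse measures on the band complex should then be bounded by a sum over minimal components that ultimately is controlled by $i(T)$, giving $\dim\Delta(T)\le 2n-2$.

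For the lower bound, I would build, for each $n\ge 5$, an arational $F_n$-tree $T$ with $\dim\Delta(T)\ge 2n-7$. The natural template is a Gabai-style construction: one assembles a band complex whose dual tree is visibly arational while supporting many projectively independent transverse measures. Concretely, I would either (i) start from a filling lamination on a surface with $\pi_1=F_n$ on which a Gabai construction produces many ergodic measures, and then deform/fold to promote the resulting surface-type tree to a non-surface arational tree while preserving the measures; or (ii) take a limit of a carefully chosen sequence of marked roses under an iwip automorphism (or a finite collection of such automorphisms) arranged so that the Rips decomposition of the limit tree has the prescribed number of ergodic components. The final step is to verify arationality of the constructed tree by ruling out proper free factor reductions, for which one can use the standard criterion that $T$ has no nontrivial subgroup fixing an arc and no invariant subtree of rank less than $n$.

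The main obstacle is the lower bound: the surface-based count only yields on the order of $3n/2$ ergodic measures, so the construction must genuinely exploit the free group setting, and the hardest step will be simultaneously arranging that (a) the tree is arational rather than merely mixing or indecomposable, and (b) each of the $2n-6$ measures one writes down is in fact ergodic and not a convex combination of finer ones. The upper bound also has a subtle point, namely eliminating the gap between the general Gaboriau--Levitt/Guirardel bound (of order $3n-4$) and $2n-1$; I expect this requires showing that dense-orbit arational band complexes for $F_n$ cannot simultaneously maximize both the number of minimal components and the branch-point complexity.
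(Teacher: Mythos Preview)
Your proposal is a sketch of plausible strategies rather than a proof, and in both directions it diverges from the paper's actual method in a way that leaves real gaps.

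\textbf{Upper bound.} The paper does not use the Rips machine or the Gaboriau--Levitt index at all. Instead it shows (\Cref{thm:arational_limiting}) that every arational tree is the limit of a reduced folding sequence, then proves the Transverse Decomposition theorem (\Cref{thm:NPRdecomp-folding}): the ergodic length measures induce a decomposition $G_0=H^0\sqcup H^1\sqcup\dots\sqcup H^k$ of the graph underlying the sequence. The bound $k\le 2n-1$ then comes from a purely combinatorial argument: one orders the edges and attaches the pieces $B^i_j$ one at a time, showing each attachment increases the complexity $\chi_-$ by at least one (\Cref{thm:Main_Upperbound}). Your Gaboriau--Levitt approach is not obviously wrong, but as you yourself note it yields $3n-4$, and you offer no mechanism to close the gap to $2n-1$; the vague hope that arationality rules out simultaneous maximization of components and branch-point complexity is not a proof.

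\textbf{Lower bound.} Here the gap is more serious. The paper's construction is entirely explicit and has nothing to do with band complexes, Gabai laminations, or iwip limits. It builds a folding sequence on a fixed graph $\Gamma_{a_0}$ with $2n-3$ edges, using compositions of ``rein movers,'' ``loopers,'' and a ``rotator'' so that the transposed transition matrices are $(2n-6,\epsilon)$-matrices with arbitrarily small $\epsilon$ (\Cref{def:klp_folding}). An Alice--Bob game (\Cref{thm:TheGame_folding}) shows one can choose the looper parameters so that all long products remain diagonal-dominant, which forces the first $2n-6$ normalized columns to converge to linearly independent vectors (\Cref{thm:klp_then_ergodic_folding}). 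Arationality is then established separately in \Cref{sec:arationality} by inserting bounded auxiliary folds to kill illegal turns in the Stallings cores of all proper free factors. None of your three suggestions---deforming a surface tree, a Gabai-style band complex, or an iwip limit---comes with a mechanism that produces $2n-6$ independent measures; the surface route, as you acknowledge, tops out near $\tfrac{3}{2}n$, and the other two are not developed enough to evaluate.

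In short, the paper's key ideas---realizing arational trees via folding sequences, the transverse decomposition from ergodic measures, the diagonal-dominant matrix game, and the Stallings-core argument for arationality---are absent from your proposal, and your alternative routes either give weaker bounds or are not yet proofs.
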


One can also consider the space of projective \emph{currents} $\nu$ that are dual to an
$F_n$-tree $T$, in the sense that the length pairing $\langle T,
\nu \rangle = 0$. This space again forms a simplex, with the vertices of this simplex
representing ergodic currents. If the simplex of currents dual to $T$ is a point, then we say the
tree $T$ is \textbf{uniquely ergodic}. Nonunique ergodicity of $F_n$-trees was
first studied by Coulbois--Hilion--Lustig \cite[Section 7]{CHL2008RtreeII}. We give a similar estimate for the
maximum nonunique ergodicity for arational trees.
\begin{MAINTHM}\label{thm:main_current}
  For $n \ge 5$,
  the maximal number of projective classes of ergodic
  currents dual to an arational $F_n$-tree is in the interval
  $[2n-6,2n-1]$. 
\end{MAINTHM}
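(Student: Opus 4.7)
The plan is to follow the template of the proof of Theorem A, replacing the simplex of projective length measures with the simplex of projective dual currents, and to exploit the duality between these two objects for ``geometric enough'' arational trees.

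For the upper bound of $2n-1$, I would first recall that if $T$ is an arational $F_n$-tree and $\nu$ is a current with $\langle T,\nu\rangle = 0$, then $\supp(\nu)$ is contained in the dual lamination $L(T)$ of $T$. Thus the simplex of projective dual currents embeds into the simplex of projective currents supported on $L(T)$. I would then seek an index-type inequality, in the spirit of Gaboriau--Levitt and Coulbois--Hilion--Lustig, bounding the dimension of the simplex of measures carried by a minimal lamination on a graph of rank $n$ by $2n-2$. This gives at most $2n-1$ projectively distinct ergodic dual currents. The bookkeeping should be essentially the same as in the proof of the $2n-1$ bound in Theorem A, since both the length-measure simplex and the dual-current simplex can be controlled by the combinatorics of branching of $L(T)$ on an approximating sequence in $CV_n$.

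For the lower bound of $2n-6$, my first attempt would be to reuse the arational trees $T$ constructed for Theorem A and produce dual currents from the length measures. When $T$ is the dual tree of a measured lamination $\Lambda$ on a surface $\Sigma$ with $\pi_1(\Sigma) \cong F_n$, Bonahon's duality between transverse measures on $\Lambda$ and geodesic currents supported on $\widehat{\Lambda}$ gives a bijection between the vertices of the two simplices, so each ergodic length measure yields an ergodic dual current and we inherit the bound $2n-6$ directly. If, instead, the extremal trees from Theorem A are non-geometric, I would aim for the dual construction: run the same folding / Rauzy--Veech-type combinatorial scheme but track it on the current side (e.g., via supports of periodic orbits or using the direction of limit currents), and appeal to the structure of the attracting/repelling currents of the iterates to exhibit $2n-6$ projectively independent ergodic dual currents.

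The step I expect to be the main obstacle is upgrading the length-measure construction to a current-side construction when the tree is non-geometric. In the non-geometric setting the two simplices need not have the same dimension, so one cannot simply quote Bonahon duality; one has to exhibit the required number of ergodic dual currents by hand, likely by arguing that the folding sequence producing $T$ also produces sufficiently many ``independent'' limit currents in $\Curr(F_n)$. A secondary obstacle is verifying that the candidate tree $T$ really is arational (so that every dual current is forced to live on $L(T)$); this needs a free-factor-support computation along the approximating sequence, analogous to the one used for Theorem A.
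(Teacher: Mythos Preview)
Your proposal has the right shape but misses the key structural asymmetry that drives the paper's actual argument, and this creates a genuine gap on the lower bound side.

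The Bonahon-duality shortcut you propose first cannot work here: the extremal trees constructed for Theorem~A are explicitly \emph{non-geometric} (this is arranged in \Cref{sec:arationality}), so there is no surface in sight and no off-the-shelf bijection between ergodic length measures and ergodic dual currents. Your fallback, ``run the same folding scheme but track it on the current side,'' is where the real content lies, but you have not identified what this means concretely. The point is that length measures \emph{pull back} along morphisms while currents \emph{push forward}; consequently, length measures on a limiting tree are naturally parametrized by a \emph{folding} sequence, whereas currents supported on the dual lamination are naturally parametrized by an \emph{unfolding} sequence (\Cref{thm:NPR-unfoldingsequence}). The paper therefore has to build, from scratch, an unfolding sequence of rank-$n$ graphs whose transition matrices satisfy a diagonal-dominance condition strong enough to force $2n-6$ independent currents. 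This is not a cosmetic change: the matrix game for unfolding (\Cref{def:klp}, \Cref{thm:TheGame}) is genuinely more delicate than the folding version, requiring four parameters $(m,p,\eps,\delta)$ instead of two, because one now multiplies matrices on the right rather than the left and must control below-diagonal growth separately. One then has to redo the arationality argument (\Cref{thm:arational_unfolding}) by inverting the unfolding sequence and equipping it with a suitable train track structure.

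For the upper bound, your appeal to a Gaboriau--Levitt or Coulbois--Hilion--Lustig index inequality is not how the paper proceeds and it is unclear that such an inequality directly yields $2n-1$ for currents. The paper instead shows (\Cref{thm:arational_limiting_unfolding}) that $L(T)$ is the diagonal closure of the legal lamination of some reduced unfolding sequence, applies the Transverse Decomposition theorem (\Cref{thm:NPRdecomp-unfolding}) to get subgraphs $H^0,\dots,H^k$ of $G_0$, and then runs a purely combinatorial complexity count (\Cref{def:complexity}, proof of \Cref{thm:Main_Upperbound}) to bound $k\le 2n-1$. You would need to either carry out this argument or make your index-inequality route precise.
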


For smaller cases of $n$, we make the following remarks. When $n=1$, Outer space $CV_1$ is a
point, so the boundary $\partial CV_1$ is empty. For $n=2$,
any arational $\R$-tree is geometric, in the sense that it arises as the dual
tree of a filling measured lamination on a punctured torus. Since the set of
minimal filling laminations is homeomorphic to $\R \setminus \Q$ for a punctured
torus (see e.g. \cite{gabai2014topology}), any measured filling lamination is
uniquely ergodic. Hence, when $n=2$, any arational tree is uniquely ergometric
and uniquely ergodic.  When $n=3$ or $4$, we can obtain a better lower bound of $n-1$ by using a rose graph with positive train track structure following an idea similar to one presented in \Cref{sec:lowerbound,sec:lowerbound_unfolding}.

Given a measured geodesic lamination on a surface, there is an $\mathbb{R}$-tree
which is dual to this lamination. When the fundamental group of the surface is
free of rank $n$, this tree represents a point in $\partial CV_n$. Trees in
$\partial CV_n$ which arise as the dual tree to a measured geodesic lamination on a
surface are said to be \textit{of surface type}. 
The space of ergodic currents dual to a surface-type tree coincides
precisely with the space of ergodic length measures on the tree, and the bounds
given by Gabai \cite{gabai2009almost}, Lenzhen--Masur~\cite{LM2010criteria}, and Levitt~\cite{levitt1983} hold. By Reynolds
\cite{reynolds2012reducing}, any \emph{geometric} arational $F_n$-tree $T$ is
dual to a filling measured geodesic lamination on a surface $S$ with one boundary
component. Say $S=S_{g,1}$ has genus $g$ and one boundary component, so that 
$T$ is an $F_{n}$-tree with $n=2g$.  As a pants decomposition of $S$ contains $3g - 2$
curves, we see that the maximum number of projectively distinct ergodic length
measures (or ergodic currents) is $3g-2$, which is on the order of
$\frac{3}{2}n$. Thus, our results show that nongeometric arational trees can
admit more ergodic length measures and ergodic currents, on the order of $2n$.

 It is still an open question whether or not the
simplex of projective classes of measured currents dual to an
$F_n$-tree $T$ always has the same dimension as
the simplex of projective classes of length functions on $T$ for
arational trees which are nongeometric.

Our approach is inspired by the work of Namazi--Pettet--Reynolds \cite{NPR2014}.
For the lower bound in \Cref{thm:main}, we consider a folding path in $CV_n$ so
that the transition matrices satisfy conditions where diagonal entries dominate
others. We phrase these conditions conveniently in terms of an ``Alice and Bob
game,'' see \Cref{thm:TheGame_folding}.
The difficulty here is in finding a folding path that realizes these conditions, and we suspect that our bound of $2n-6$ can be improved. For \Cref{thm:main_current}, the lower bound is obtained similarly, but instead using unfolding paths in $CV_n$. In both constructions, a technical difficulty lies in ensuring that the limiting trees are arational. We do this by enumerating free factors and guiding the (un)folding path to ensure that these factors act discretely in the limit, one at a time, while ensuring that the matrix conditions still hold. 

We remark that the matrices we construct have deep connections to the field of dynamics and have been used to build
examples of non-uniquely ergodic systems. For instance, in
\cite{keane1977nonergodic}, Keane constructed a minimal 4-interval
exchange with two ergodic measures, and
Yoccoz \cite[Section 8.3]{yoccoz2010interval} constructed minimal interval exchange transformations with the maximal
number of ergodic measures.

For the upper bound in both \Cref{thm:main} and \Cref{thm:main_current}, we use
Transverse Decomposition theorems, \Cref{thm:NPRdecomp-folding,thm:NPRdecomp-unfolding}, to obtain a nice transverse
decomposition of graphs induced from linearly independent ergodic measures.
Namazi--Pettet--Reynolds \cite{NPR2014} gave the first construction for such a
transverse decomposition, but we give a more direct
argument. We then put an order on the edges
with respect to the decomposition so that adding the edges in the increasing
order strictly raises the ``complexity'' of the resultant graph. Since the number of steps to obtain the
whole graph depends only on the number of different ergodic components from the
transverse decomposition, this gives an upper bound on the number of ergodic
measures.

In \Cref{sec:prelim}, we give the necessary background on Outer space, length
measures, currents,
folding/unfolding sequences, arationality and ergodicity. We also give our proofs
of the Transverse Decomposition theorems, \Cref{thm:NPRdecomp-folding,thm:NPRdecomp-unfolding}. Then, toward the proof
of \Cref{thm:main}, \Cref{sec:lowerbound}  establishes the lower bound $2n-6$
for the maximal number of projective classes of length measures on an $\R$-tree, $T$. This bound is obtained by constructing $T$ as a limiting tree of a folding sequence whose transition matrices have diagonal-dominant entries.
 In \Cref{sec:lowerbound_unfolding}, we begin our proof of \Cref{thm:main_current} by establishing the same lower bound for the maximal
number of projective classes of measured currents dual to an $\R$-tree in the
accumulation set of an unfolding sequence. We do this by constructing an unfolding sequence with
diagonal-dominant transition matrices, which is a bit more involved than the
folding sequence construction. In \Cref{sec:upperbound}, we provide the other
bound for \Cref{thm:main} and \Cref{thm:main_current} by establishing the upper bound
$2n-1$ for both simplices of nonuniquely ergodic and nonuniquely ergometric trees using the
ergodic transverse decomposition and the complexity of graphs. To finalize the proofs of
\Cref{thm:main} and \Cref{thm:main_current}, we prove in 
\Cref{sec:arationality} that we can arrange the folding and unfolding
sequences constructed in \Cref{sec:lowerbound} and
\Cref{sec:lowerbound_unfolding} to ensure that the limiting objects are
arational and non-geometric, while preserving the number of linearly independent
measures.

\subsection*{Acknowledgments}
We thank Jon Chaika for helpful conversations throughout this project,
pointing us to the papers \cite{keane1977nonergodic,yoccoz2010interval} and
providing novel perspectives.
The third author would like to thank Carlos Ospina for helpful conversations while building
the proof of \Cref{thm:arational_limiting_unfolding}, and thank Jing Tao for
helpful discussions on \cite{bestvina2024limit} and arational trees. We
thank Gilbert Levitt for directing us to the reference \cite{levitt1983}. The authors would also like to thank the referee for useful comments that improved the readability of this paper.

The authors gratefully acknowledge support: MB from NSF DMS-2304774, EF from NSF
DMS-1840190 and DMS-2103275, and SK from the University of Utah Graduate
Research Fellowship, from KIAS Individual Grant (HP098501) via the June E
Huh Center for Mathematical Challenges at Korea Institute for Advanced Study and from the New Faculty Startup Fund (700-20250069) at Seoul National University.

\tableofcontents

\addtocontents{toc}{\protect\setcounter{tocdepth}{3}}

\section{Preliminaries}
\label{sec:prelim}

\subsection{Graphs and Outer space} \label{ssec:graphsandCVn}
A \textbf{graph} is a CW complex of dimension at most 1. Denote by $R_n$ the
rose graph with $n$ petals, which is a graph with one 0-cell (called a
\textbf{vertex}) and $n$ 1-cells (called \textbf{edges}). For a graph $G$, we denote by $VG$ the
set of vertices of $G$ and $EG$ the set of edges of $G$. A \textbf{marking} of
a graph $G$ of rank $n$ is a homotopy equivalence $m : R_n \to G$. A \textbf{metric} on $G$ is a function
$\ell:EG \to \R_{>0}$, which assigns to each edge a positive number. Indeed,
$\ell$ endows a path metric on $G$: the distance between two points is defined
as the length of the shortest (weighted) path joining the two vertices. The
\textbf{volume}, $\vol(G,\ell)$, of a metric graph $(G,\ell)$ is the sum $\sum_{e \in EG}\ell(e)$.
Culler--Vogtmann \cite{culler1986moduli} introduced 
\textbf{Outer space}, $CV_n$, as the space of equivalence classes of
finite marked metric
graphs of rank $n$ and of unit volume:
\[
  CV_{n}:= \{(G, m, \ell)\ |\ \vol(G,\ell)=1\}/\sim,
\]
where $(G,m,\ell) \sim (G',m',\ell')$ if there is an isometry $i:G \to G'$ such
that $i\circ m$ is homotopic to $m'$. The collection of marked metric graphs of
rank $n$ without the unit volume condition is denoted by $cv_n$, the
\textbf{unprojectivized Outer space}. Culler--Vogtmann also showed $CV_n$ is contractible.
For a more detailed exposition on $CV_n$, see \cite{bestvina2014PCMI}.

Let $G$ be a finite graph of rank $n$ such that all vertices have valence at least 3,
and fix a marking $m: R_n\to G$. Given any other metric graph $G'$, a map $f:
G \to G'$ is said to be a \textbf{morphism} if it maps vertices to
vertices and it is locally injective on the
interior of every edge (and hence has no backtrackings). The \textbf{transition matrix}
of a morphism $f: G \to G'$ is the matrix which records how the morphism $f$ maps
edges of $G$ over edges of $G'$. In particular, if we label the (unoriented)
edges of $G$ by $e_1, e_2, \ldots, e_m$ and the edges of $G'$ by $e_1', e_2',
\ldots, e_n'$, then the transition matrix $M_f$ for the morphism $f: G \to G'$
is the $n \times m$ matrix where the $ij$-entry records the number of times the path
$f(e_j)$ crosses over the edge $e_i'$.
Per Stallings \cite{stallings1983topology}, we have the following definitions.
\begin{definition}
  Given a graph $G$ and two edges $e,e'$ of $G$, a \textbf{fold} is the natural
  quotient morphism $G \to G/(e \sim e')$. Moreover, if $e,e'$ only share one
  vertex, then the fold is said to be of \textbf{type I}. Otherwise, if $e,e'$
  share both vertices, then the fold is said to be of \textbf{type II}.
\end{definition}

Both types of folds induce surjective homomorphisms on fundamental groups, but only type I folds
induce injective ones. Stallings \cite[Algorithm 5.4]{stallings1983topology}
proved that any morphism which is a homotopy equivalence can be decomposed as a
sequence of type I folds. 

\subsection{Train tracks and the geometry of Outer space}
For a Lipschitz map $f$ between two metric spaces, we denote by $\sigma(f)$ its
Lipschitz constant. We can now define the distance between two marked
metric graphs $(G,m,\ell), (G',m',\ell')$ in $CV_n$ as follows.
\[
  d(G,G') := \inf \left\{\log \sigma(f)\ |\ f:G \to G' \text{ is Lipschitz and
    $f m \simeq m'$}\right\}.
\]
This metric is called the \textbf{Lipschitz metric} on $CV_n$, but it is asymmetric:
$d(G,G')$ may not be equal to $d(G',G)$.
By the Arz{\'e}la--Ascoli theorem, the infimum above is realized for every $G,G'
\in CV_n$. We will call such a witness $f: G \to G'$ an \textbf{optimal map}
if, in addition to realizing the infimum, it is linear on each edge. For an optimal
map $f: G \to G'$ with $\sigma(f)=\lambda$, 
we define the \textbf{tension graph} of $f$, denoted by $\Delta_f$, to be the subgraph of $G$ induced by those edges $e\in EG$ with slope $\lambda$
(Namely, the union of $e \in EG$ such that $\sigma(f|_e)=\lambda$.)

Now we define a train track structure on a graph.
For each vertex $v$ in a graph $G$, a \textbf{direction} at $v$ is an oriented edge
of $G$ with initial vertex $v$. Denote by $T_v(G)$ the set of all directions at
$v$, which can be regarded as the unit tangent space of $G$ at $v$. Then we give an
equivalence relation on $T_v(G)$. A \textbf{train track structure} on a graph
$G$ is a collection $\{(T_v(G), \sim_v)\}_{v \in VG}$, where $\sim_v$ is an equivalence
relation on $T_v(G)$ for each vertex $v$ of $G$. The equivalence classes are
called \textbf{gates}.
  A \textbf{turn} at $v$ is an unordered pair of directions at $v$, and it is
  called \textbf{legal} if the directions belong to different gates.
An immersed path in $G$ is called \textbf{legal} if it makes legal turns at
each vertex. 
Given a morphism $f: G \to G'$
and a vertex $v$ in $G$, we note that $f$ induces a map $Df: T_v(G) \to T_{f(v)}(G')$, as $f$ maps vertices to vertices. We then have a natural train track structure on $G$ induced by $f$ where for each vertex $v$ of $G$, we define an equivalence relation $\sim_v$ on $T_v(G)$ as:
\[
  d_1 \sim_v d_2 \quad \text{if and only if} \quad Df(d_1) = Df(d_2).
\]
Given a morphism $f:G \to G'$, we will always consider the induced train track structure on $G$, unless otherwise specified.
With this induced structure, we
notice that an immersed edge path $p$ in $G$ is legal if $f(p)$ is a reduced path, i.e., $f$ is
locally injective on the vertices of $p$ (as well as on the interior of the edges).

\subsection{Folding and unfolding sequences}
\label{ssec:sequences}

A \textbf{folding/unfolding} sequence is a sequence of graphs,
\[
\cdots \to G_{-2} \to G_{-1} \to G_0 \to G_1 \to G_2 \to \cdots,
\]
together with morphisms $\{\psi_s: G_s \to G_{s+1}\}$ that are homotopy
equivalences, and so that all finite compositions are morphisms.
We consider two special cases: \textbf{folding} sequences, which are
  enumerated by nonnegative integers and
denoted by
$(G_s)_{s \ge 0}$, and \textbf{unfolding} sequences, enumerated by nonpositive integers and denoted by $(G_s)_{s \le 0}$.

For each $r, s\in \mathbb{Z}$ with $r < s$,
we write the composition of maps as
\[
  \phi_{r,s}:= \psi_{s-1}  \circ \cdots \circ \psi_{r+1} \circ \psi_{r}: G_r \to G_s 
\]
For each morphism
$\phi_{r,s}:G_r \to G_s$, denote by $M=M^{r,s}$ its transition matrix. Note that
as we compose functions right to left, we have that $\phi_{r,t} = \phi_{s,t}\circ
\phi_{r,s}$ and $M^{r,t} = M^{s,t}\cdot M^{r,s}$.
We say that an unfolding sequence $(G_s)_{s \le 0}$ is \textbf{reduced} if 
there is no proper invariant subgraph in the sequence. That is, if there is a
sequence $\{H_s\}_{s \le 0}$ of subgraphs $H_t\subseteq G_t$ for $t\leq 0$
and there is $s_0 < 0$ such that $\phi_{r,s}(H_r)\subseteq H_s$ for $r<s\leq s_0$, then either
$H_s=G_s$ for all $s$ or $H_s$ contains no edges for all large $-s$. 
Likewise, a folding sequence $(G_s)_{s \ge 0}$ is \textbf{reduced} if for any sequence $\{H_s\}_{s \ge 0}$ of subgraphs $H_t\subseteq G_t$ for
$t\geq 0$, if there is $r_0>0$ such that
$\phi_{r,s}(H_r)\subseteq H_s$ for all $s>r\ge r_0$, then either $H_r = G_r$ for
all large $r$ or $H_r$ contains no edges for all $r$.

In a folding sequence, train track structures on any $G_r$ induced by
the morphisms $G_r\to G_s$ eventually stabilize as $s\to\infty$, and
in practice we construct folding sequences by fixing a train track
structure on $G_0$ and then folding compatibly.

A \textbf{length measure} on a folding/unfolding sequence $(G_s)_s$ is defined
to be a sequence of vectors $(\vec{\ell}_s)_{s}$ in $\R^{|EG_s|}$, where for each
$s$, the vectors satisfy the compatibility condition (`lengths pull
back'):
\[
  \vec{\ell}_s = (M^{s,s+1})^T\vec{\ell}_{s+1}.
\]
We consider the set $\cL((G_s)_{s\ge 0})$ of length measures on the folding sequence
$(G_s)_{s \ge 0}$ as the cone of non-negative vectors in the finite dimensional
vector space whose vectors are sequences $(\vec{v}_s)_{s \ge 0}$ with $\vec{v}_s
\in \R^{|EG_s|}$ for every $s \ge 0$ satisfying the compatibility condition
\[
  \vec{v}_{s} = (M^{s,s+1})^T\vec{v}_{s+1}
\]
for every $s \ge 0$. The dimension of this space is nonzero and bounded above by $\liminf_{s
  \to \infty} |EG_s|>0$.

Recall that a morphism that is a homotopy equivalence can be realized
as a composition of type I 
folds, after possibly
subdividing, (see \Cref{ssec:graphsandCVn}). Then the lengths
of conjugacy classes in $F_n$ 
are nonincreasing 
in the folding sequence. Hence, seeing
$\ov{CV_n} \subset \mathbb P\R_{\geq 0}^{F_n}$ (see \Cref{ssec:boundaryCVn}), it follows that 
a folding sequence $(G_s)_{s \ge 0}$ always projectively converges to an $\R$-tree
$T$, which we call the
\textbf{limit tree}. 
More precisely, to a reduced folding sequence $(G_s)_{s \ge 0}$, Namazi--Pettet--Reynolds associate a
{\it topological limit tree} $\hat{T}$, \cite[Section 5.1]{NPR2014}. This is a topological tree (a space that is uniquely path connected and locally path connected) admitting an $F_n$-action
obtained by considering the universal cover $T_0$ of $G_0$, identifying points
in $T_0$ whose images coincide in the universal cover $T_n$ of $G_n$, for some
$n > 0$, and finally ensuring that this quotient space is Hausdorff. They show
that the natural $F_n$-action on $\hat{T}$ has dense orbits.
Now, assigning a length measure to the folding sequence $(G_s)_{s \ge 0}$  yields a pseudo-metric on $\hat{T}$ as follows. For each $s \ge 0$, denote by $d_s$
the induced metric on the universal cover $T_s:=\wt{G_s}$. Then, $\hat{T}$ has a
pseudo-metric defined as
  \[
    d(x,y):= \inf_{s \ge 0}d_s(x_s,y_s),
  \]
  where $x_s,y_s \in T_s$ are points mapped to $x,y \in \hat{T}$, respectively,
  under the quotient map $T_s \to \hat{T}$. Finally, let $T$ be the $\R$-tree
  obtained from $\hat{T}$ by identifying pairs of points at pseudo-distance 0, and let $d$
  be its induced metric. Then,

\begin{lemma}[{\cite[Lemma 5.1]{NPR2014}}]\label{lem:folding_limiting_tree}
  A folding sequence $(G_s,\vec{\ell}_s)_{s \ge 0}$ of marked metric
  graphs, after projection to $CV_n$, converges to an $\R$-tree $(T,d)$, called the \textbf{limit tree},
  where $d$ is the induced metric obtained from the pseudo-metric on the topological limit
  tree $\hat{T}$ of the folding sequence.
\end{lemma}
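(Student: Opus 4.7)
The plan is to first establish projective convergence of the sequence in $\ov{CV_n}$ using monotonicity of translation lengths along folds, and then to identify the resulting limit with the $\R$-tree built from the pseudo-metric on $\hat{T}$.

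First, I will lift each morphism $f_s : G_s \to G_{s+1}$ to an $F_n$-equivariant map $\wt{f}_s : T_s \to T_{s+1}$ of universal covers. After subdividing so that each type I Stallings fold identifies a pair of edges of equal length, every such fold is a $1$-Lipschitz quotient, so each $\wt{f}_s$ is $1$-Lipschitz and equivariant. Consequently, for every $g \in F_n$, the translation length $\ell_s(g)$ of $g$ on $T_s$ is non-increasing in $s$, and the pointwise limit $\ell_\infty(g) := \lim_{s \to \infty} \ell_s(g)$ exists in $[0,\infty)$.

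Second, after rescaling each $(G_s, \vec{\ell}_s)$ to unit volume, the projective classes $[\ell_s]$ lie in the compact space $\ov{CV_n} \subset \mathbb P\R_{\geq 0}^{F_n}$. The pointwise convergence established above, together with a compactness argument and the reduced hypothesis used to rule out that $\ell_\infty$ degenerates to the zero class after rescaling, shows that the whole sequence $[\ell_s]$ converges to a single projective class $[\ell_\infty]$. By the Culler--Morgan correspondence, $[\ell_\infty]$ is realized as the translation length function of a minimal very small $\R$-tree with isometric $F_n$-action.

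Third, I identify this projective limit with the $\R$-tree $(T, d)$ produced from $\hat{T}$. Since each composition $\phi_{r,s}$ is a non-expanding $F_n$-equivariant morphism, for any $x, y \in \hat{T}$ with preimages $x_s, y_s \in T_s$ the sequence $d_s(x_s, y_s)$ is non-increasing, so the infimum in the definition of $d$ is actually a limit and descends to a well-defined pseudo-metric on $\hat{T}$. Collapsing pseudo-distance zero classes yields the metric $d$ on $T$; inspecting an axis of $g$ in each $T_s$ and passing to the limit gives $\ell_T(g) = \ell_\infty(g)$ for every $g \in F_n$. Since a minimal very small $F_n$-tree with dense orbits is determined up to equivariant isometry by its translation length function, $(T,d)$ coincides with the projective limit from the second step.

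The main obstacle is verifying that $(T, d)$ is a genuine $\R$-tree: that the pseudo-metric on $\hat{T}$ satisfies the four-point $0$-hyperbolicity condition and that the quotient is Hausdorff with dense $F_n$-orbits. These rely on the fact that the $\phi_{r,s}$ are morphisms --- locally injective on edge interiors with no backtracking --- which controls how far initially separated points can later be collapsed; the reduced hypothesis is then what rules out the pathological case in which $\ell_\infty$ becomes identically zero in the limit and the projective convergence fails.
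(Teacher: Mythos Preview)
The paper does not give its own proof of this lemma; it is cited directly from \cite[Lemma~5.1]{NPR2014}. The only argument the paper supplies is the one-sentence sketch in the paragraph preceding the statement: since a morphism that is a homotopy equivalence factors as a composition of type~I folds, edge lengths---and hence translation lengths---are nonincreasing along the sequence, and convergence in the compactification $\ov{CV_n}\subset\mathbb P\R_{\ge 0}^{F_n}$ follows. Your first step recovers exactly this monotonicity argument, and your third step (passing from the pseudo-metric on $\hat T$ to a genuine metric and matching translation length functions) is the content that the paper outsources to \cite{NPR2014}.

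Two comments on your write-up. First, you invoke ``the reduced hypothesis'' to rule out $\ell_\infty\equiv 0$, but the lemma as stated does not assume the sequence is reduced; in the paper, reducedness enters only in the surrounding discussion where $\hat T$ is constructed. If you want to use it, you should make explicit that the construction of $\hat T$ (and hence the statement) already presupposes it. Second, once you know $\ell_s(g)$ is nonincreasing and $\ell_\infty(g_0)>0$ for some $g_0$, projective convergence is immediate: $\ell_s(g)/\ell_s(g_0)\to\ell_\infty(g)/\ell_\infty(g_0)$ since the denominator is bounded below. No separate compactness argument is needed, and you should drop the appeal to one---as written, your second paragraph suggests you are only extracting a subsequential limit, which would not give convergence of the full sequence.
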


Recall a \textbf{length measure} on an $F_n$-$\R$-tree $T$ is a collection of finite Borel measures $\{\mu_I \mid \text{$I$ is a compact interval of $T$}\}$ such that $\mu_J = (\mu_I)|_J$ for all $J \subset I$. It is $F_n$-equivariant if $\mu_{g(I)}=(g|_I)_*\mu_I$ for all $I$ and $g \in F_n$.
The following theorem formalizes the relationship described above between the set of length
measures on the limiting tree $T$ to those on the folding sequence $(G_s)_{s
  \ge 0}$.

\begin{theorem}[{\cite[Proposition 5.4]{NPR2014}}] \label{thm:NPR-foldingsequence}
    Given a reduced folding sequence $(G_s)_{s \ge 0}$ with limiting tree $T$,
    there is a natural linear isomorphism between the space of length measures
    on $(G_s)_{s \ge 0}$ and the space of equivariant length measures on
    $T$.
    
\end{theorem}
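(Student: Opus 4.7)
The plan is to exhibit an explicit linear isomorphism between the two spaces by defining lengths of edges in one direction and reading back measures on arcs in the other. Let $\pi_s\colon T_s \to T$ denote the natural $F_n$-equivariant map from the universal cover of $G_s$ to the limit tree, as constructed in \cite[Section 5.1]{NPR2014}; the lift $\tilde f_s\colon T_s\to T_{s+1}$ of the fold $f_s$ satisfies $\pi_s = \pi_{s+1}\circ\tilde f_s$. For each edge $e\in EG_s$, fix an arbitrary lift $\tilde e\subset T_s$.

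First, I would define the forward map $\Phi$ from equivariant length measures on $T$ to length measures on $(G_s)_{s\ge 0}$ by
\[
  \Phi(\mu)_s(e) \;:=\; \mu\bigl(\pi_s(\tilde e)\bigr),
\]
which is independent of the chosen lift by $F_n$-equivariance of $\mu$, and is linear in $\mu$. To verify the compatibility $\vec\ell_s = (M^{s,s+1})^T\vec\ell_{s+1}$: because $f_s$ is a morphism, the image $\tilde f_s(\tilde e)$ is a legal edge-path in $T_{s+1}$ that crosses exactly $M^{s,s+1}_{e',e}$ translates of each lifted edge $\tilde e'$; applying additivity of $\mu$ along the arc $\pi_s(\tilde e) = \pi_{s+1}(\tilde f_s(\tilde e))$ then yields the identity.

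Next, I would construct the inverse $\Psi$. Given a compatible sequence $(\vec\ell_s)_{s\ge 0}$ and an arc $I = [a,b]\subset T$, I would choose $s$ large enough that $a$ and $b$ each lift to $T_s$ and the unique arc $J\subset T_s$ between those lifts is legal, so $\pi_s$ maps $J$ injectively onto $I$. I then declare
\[
  \Psi(\vec\ell)(I) \;:=\; \sum_{e\in EG_s}\ell_s(e)\cdot\#\{\text{crossings of translates of } \tilde e \text{ in } J\},
\]
with the natural partial contribution at the two end-edges of $J$. Independence of the choice of $s$ follows directly from compatibility: replacing $J$ by $\tilde f_s(J)$ replaces each lifted edge by a legal path in $T_{s+1}$ whose total $\vec\ell_{s+1}$-weight equals the original $\vec\ell_s$-weight, precisely because $\vec\ell_s = (M^{s,s+1})^T\vec\ell_{s+1}$. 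Countable additivity and $F_n$-equivariance are inherited from the maps $\pi_s$.

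Finally, the two compositions $\Psi\circ\Phi$ and $\Phi\circ\Psi$ agree with the identity tautologically from the defining formulas, completing the isomorphism. The main obstacle is the well-definedness of $\Psi$: one must show that every arc $I\subset T$ is, for sufficiently large $s$, the injective image of a legal arc in $T_s$, and that the assigned value is independent of $s$ and of the chosen lift. Both points depend crucially on the reduced hypothesis on $(G_s)_{s\ge 0}$, which prevents edges of $G_s$ from belonging to a subgraph that is eventually collapsed in $T$, and on the fact that type I folds preserve legality, so that legal arcs drawn from the trees $T_s$ indeed exhaust the arcs of $T$ as $s\to\infty$.
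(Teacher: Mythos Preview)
The paper does not prove this theorem: it is quoted verbatim from \cite[Theorem 5.4]{NPR2014} and used as a black box, so there is no argument in the paper to compare your proposal against.

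As for the proposal itself, the outline is in the right spirit but the inverse map $\Psi$ is not fully justified. Your key step is to take an arbitrary arc $I=[a,b]\subset T$, ``lift'' its endpoints to some $T_s$, and find a legal arc $J$ in $T_s$ with $\pi_s(J)=I$. But points of $T$ need not have preimages in any $T_s$ (the maps $\pi_s$ go the wrong way for lifting, and $T$ is a quotient of the topological limit tree $\hat T$, which is itself a Hausdorffification of a direct limit), and even when they do, there is no reason a priori that the arc in $T_s$ between chosen preimages is legal or maps injectively. The actual argument in \cite{NPR2014} proceeds differently: one first shows that a length measure on the sequence determines a pseudometric on $\hat T$ (via the infimum over $s$ of the $T_s$-distances, exactly as in the construction of the limit tree recalled just before \Cref{lem:folding_limiting_tree}), and then that the resulting metric quotient is $F_n$-equivariantly homeomorphic to $T$; the reduced hypothesis is used to guarantee dense orbits on $\hat T$, which is what forces all such metric quotients to agree topologically. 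Your sketch would need this density/mixing input to make the exhaustion-by-legal-arcs claim precise.
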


A nonzero length measure on the limit tree $\hat{T}$ may well assign 0 to a non-degenerate
interval, so the associated $\R$-tree is obtained from $\hat{T}$ by 
collapsing subtrees.  
However, if $\hat T$ is arational (or, indeed \textit{mixing}; see e.g.\  \cite{morgan1988ergodic,morgan1992trees})  
then every nontrivial pseudometric will be a metric.

    Dual to any length measure is a \emph{width measure}. A \textbf{width measure}
    (or a \textbf{current}) on a folding/unfolding sequence $(G_s)_s$ is a
    sequence of non-negative vectors $(\vec{\mu}_s)_s$ in $\R^{|EG_s|}$ where for each $s$, the vectors satisfy the compatibility condition (`currents push forward'):
    \[
        \vec{\mu}_s = M^{s-1,s}\vec{\mu}_{s-1}.
    \]
    We consider the set $\calC((G_s)_{s\le 0})$ of width
    measures on the unfolding sequence $(G_s)_{s \le 0}$ as the cone
    of non-negative vectors in the finite-dimensional vector space
    whose vectors are sequences $(\vec{v}_s)_{s \le 0}$ with
    $\vec{v}_s \in \R_{\geq 0}^{|EG_s|}$  for every $s \le 0$ satisfying the compatibility condition
    \[
        \vec{v}_{s} = M^{s-1,s}\vec{v}_{s-1}
    \]
    for every $s \le 0$. The dimension of this space is nonzero and bounded above by $\liminf_{s \to -\infty} |EG_s|>0$.

    Unlike the case of folding sequences, the lengths of graphs in an unfolding
    sequence increase. Hence, an unfolding sequence does not necessarily converge
    to a tree, but after rescaling, it may have a nontrivial accumulation set.
    Instead of this accumulation set for an unfolding sequence, we consider a \emph{legal lamination} defined as follows.
        In an unfolding sequence of graphs, $(G_s)_{s \le 0}$, we say a path $p$ in
    $G_s$ is \textbf{legal} if it is legal with respect to $\phi_{s,0}:G_s\to
    G_0$ (equivalently, $\phi_{s,0}$ immerses $p$ in $G_0$). Denote by $\Omega_{\infty}^L(G_s)$ the set of bi-infinite legal
    paths in $G_s$. Now define the \textbf{legal lamination} $\Lambda$ of an unfolding
    sequence $(G_s)_{s \le 0}$ as:
    \[
      \Lambda:= \bigcap_{s \le 0}\phi_{s,0}(\Omega_{\infty}^L(G_s)),
    \]
    which is a subset of $\Omega_{\infty}^L(G_0)$. 
    Similar to the limiting tree
    of a folding sequence, the set of \emph{currents} (see \Cref{ssec:arationalprelim}) on the legal lamination characterizes
    the set of width measures on the unfolding sequence:

\begin{theorem}[{\cite[Theorem 4.4]{NPR2014}}] \label{thm:NPR-unfoldingsequence}
    Given a reduced unfolding sequence $(G_s)_{s \le 0}$ with legal lamination
    $\Lambda$, there is a natural linear isomorphism between the space of
    width measures supported on $(G_s)_{s \le 0}$ and the space of currents supported on $\Lambda$.
  \end{theorem}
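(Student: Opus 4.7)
The plan is to construct explicit linear maps in both directions between the space of width measures on $(G_s)_{s \le 0}$ and the space of currents supported on $\Lambda$, and then verify they are inverse to each other. The guiding principle is that an edge $e \in EG_s$ should correspond to the cylinder set of bi-infinite legal paths in $G_0$ that pass through some lift of the legal path $\phi_{s,0}(e)$.

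First I would fix conventions. For each $s \le 0$ and each edge $e \in EG_s$, let $\mathrm{Cyl}_s(e) \subset \Omega_\infty^L(G_0)$ denote the set of bi-infinite legal paths in $G_0$ that are images under $\phi_{s,0}$ of bi-infinite legal paths in $G_s$ crossing $e$. The sets $\{\mathrm{Cyl}_s(e)\}_{e \in EG_s}$ form a countable semi-algebra on $\Lambda$, and as $s \to -\infty$ they generate the Borel $\sigma$-algebra on $\Lambda$ by definition of the legal lamination.

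Given a current $\nu$ on $\Lambda$, I would define $\vec{\mu}_s \in \R_{\ge 0}^{|EG_s|}$ by $\vec{\mu}_s(e) := \nu(\mathrm{Cyl}_s(e))$. The compatibility condition $\vec{\mu}_s = M^{s-1,s}\vec{\mu}_{s-1}$ then follows from countable additivity of $\nu$: a bi-infinite legal path in $G_s$ crossing $e$ lifts (uniquely per preimage) to a path in $G_{s-1}$ crossing a unique edge $e'$ of $G_{s-1}$, and the number of edges $e' \in EG_{s-1}$ mapping to a path that crosses $e$ (counted with multiplicity of crossings) is exactly the $(e,e')$-entry of $M^{s-1,s}$, so $\mathrm{Cyl}_s(e)$ partitions into the images of $\mathrm{Cyl}_{s-1}(e')$ with appropriate multiplicity.

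Conversely, given a width measure $(\vec{\mu}_s)_{s \le 0}$, I would prescribe $\nu(\mathrm{Cyl}_s(e)) := \vec{\mu}_s(e)$ on the semi-algebra generated by all such cylinders and invoke the Carathéodory extension theorem to obtain a Borel measure $\nu$ on $\Lambda$. The compatibility condition ensures the prescription is consistent across scales (as the same check as above runs in reverse), so the pre-measure is well-defined and countably additive on the semi-algebra. The $F_n$-invariance is automatic since our cylinders are defined by edges of $G_s = \wt{G_s}/F_n$, and we can pull back/push forward $\F_n$-equivariantly. Flip-invariance follows by the fact that legal paths are considered unoriented in the definition. Support on $\Lambda$ is built in, since every cylinder we measured is by construction contained in $\phi_{s,0}(\Omega_\infty^L(G_s))$, and hence $\nu$ is supported on $\bigcap_{s\le 0}\phi_{s,0}(\Omega_\infty^L(G_s)) = \Lambda$.

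Finally, linearity is immediate from both constructions, and the two maps are inverse since they agree on the generating semi-algebra $\{\mathrm{Cyl}_s(e)\}$. The main obstacle I anticipate is technical care in Step 2: one must show that the set function on cylinders is countably additive (not merely finitely additive) so that Carathéodory extension applies, and that the resulting $\nu$ is a genuine current in the \emph{measure-theoretic} sense used elsewhere in the paper (Borel, locally finite, $F_n$-invariant, flip-invariant). Here the reduced hypothesis plays its role by ruling out the pathological possibility of the cylinder sets $\mathrm{Cyl}_s(e)$ collapsing to measure-zero remnants that fail to support a nontrivial measure on $\Lambda$, or conversely admitting "phantom" mass supported outside $\Lambda$ on leaves that eventually become non-legal. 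Once reducedness is used to exclude these, the correspondence falls out of the cylinder-set formalism.
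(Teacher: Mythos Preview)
The paper does not supply its own proof of this statement: it is quoted verbatim from \cite[Theorem 4.4]{NPR2014} and used as a black box. So there is no in-paper argument to compare against.

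That said, your outline is the standard cylinder-set approach and is close in spirit to what Namazi--Pettet--Reynolds actually do. A few technical points deserve more care before this becomes a proof. First, your cylinder sets $\mathrm{Cyl}_s(e)$ are indexed by single edges, but a bi-infinite legal path can cross $e_s$ many times, so the ``partition'' you describe in the compatibility check is not a partition of sets but a counting identity; to make the Carath\'eodory argument go through cleanly one typically indexes cylinders by finite legal \emph{paths} (not single edges) in $G_s$, where the measure of the cylinder for a path $\gamma$ records the $\nu$-mass of leaves containing a lift of $\phi_{s,0}(\gamma)$. Second, you should be explicit about the identification between $\Omega_\infty^L(G_0)$ and a subset of $\partial^2 F_n$, since currents are by definition measures on $\partial^2 F_n$; this is where $F_n$- and flip-invariance are checked. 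Third, the reduced hypothesis is used not quite where you place it: it guarantees that the cylinder sets for finite legal paths in $G_s$, as $s\to -\infty$, generate the Borel $\sigma$-algebra on $\Lambda$ (equivalently, that legal segments get arbitrarily long under pullback), which is what makes the extension unique and the two maps genuinely inverse. With these adjustments your sketch matches the architecture of the original proof.
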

  We sketch the proof of \Cref{thm:NPR-unfoldingsequence} at the end of \Cref{ssec:arationalprelim} as it is in unpublished work.

\subsection{Boundary of Outer space and laminations}
\label{ssec:boundaryCVn}

Let $\partial F_n$ be the Gromov boundary of the Cayley graph of $F_n$, which is
well-defined up to homeomorphism. Then define the \textbf{double boundary}
$\partial^2F_n$ as $\partial F_n \times \partial F_n$ minus its diagonal. It
is equipped with a natural \textbf{flip} action denoted by $i : \partial^2F_n \to
\partial^2F_n$, which swaps the two factors. We further observe that for a
finitely-generated subgroup $H \le F_n$, we can embed $\partial^2H \subset
\partial^2 F_n$ as $H$ is quasiconvex in $F_n$.
Moreover, the action of $F_n$ on the Cayley graph of $F_n$ by homeomorphisms
extends to $\partial F_n$ and hence to $\partial^2 F_n$. We define a
\textbf{lamination} $L$ as a nonempty, closed, $F_n$-invariant, flip-invariant
subset of $\partial^2F_n$.

We now want to change our perspective on Outer space. Recall
each element of $CV_n$ is a marked metric graph $(G, m, \ell)$ of unit volume. Then the
universal cover of $G$ is a tree, $T$. Denoting the covering map by $p:T \to G$,
we can define the length function $\wt{\ell}$ on the set of edges of $T$
by letting each edge $e$ of $T$ have length
$\wt{\ell}(e):=\ell(p(e))$. The length function $\wt{\ell}$ then extends to a path metric $d_T$ on $T$.
Note that by construction, $\wt{\ell}$ is equivariant under the $F_n$-covering space action. Because the covering space action is discrete and free, we can
regard each element in $CV_n$ as an $F_n$-tree with a discrete, free action by
isometries. Conversely, one has to restrict the set of $F_n$-trees to
\textbf{minimal} ones, namely those for which there is no $F_n$-invariant proper subtree. Then we can view $CV_n$ as:
\[
  CV_n = \{\text{$F_n$-tree } T\ |\ \text{discrete, free,
    minimal, and isometric $F_n$-action}\}/\sim,
\]
where $T \sim T'$ if and only if there is an $F_n$-equivariant homothety between
two $F_n$-trees
$T$ and $T'$. Weakening the equivalence condition to $F_n$-equivariant isometry
gives $cv_n$, the unprojectivized Outer space (See \Cref{ssec:graphsandCVn}.)

This new perspective on $CV_n$ helps us to discuss its boundary. Indeed,
Culler--Morgan \cite{culler1987groupactions} showed that minimal $F_n$-trees are
characterized by the associated \textbf{translation length function} $\ell_T: g \mapsto
\inf\left\{ d_t(x, g\cdot x)\ |\ x \in T\right\}$ on $F_n$. Therefore, with the
$F_n$-tree perspective on $CV_n$, we can embed $CV_n$ into $\R_{\geq 0}^{F_n}$, which
stays embedded in the projectivization $\bbP\R_{\geq 0}^{F_n}$. As
the image of $CV_n$ in $\bbP\R_{\geq 0}^{F_n}$
has a compact closure, we can define the boundary as
$\partial CV_n:= \ov{CV_n} \setminus CV_n$. The points on the
boundary also correspond to $F_n$-trees, but they are either not free or
not discrete. The $F_n$-trees in the compactification $\ov{CV_n}$ are collectively
characterized as \emph{very small} trees, by Bestvina--Feighn \cite{bestvina1994outerlimits},
Cohen--Lustig \cite{cohen1995verysmall}, and Horbez \cite{horbezverysmall}.

Like in the boundary of Teichm{\" u}ller space, we can associate a lamination
$L(T)$ to a point $T$ on the
boundary of Outer space. We construct $L(T)$ as
follows. From the $F_n$-action on the Cayley graph of $F_n$, each nontrivial
element $g \in F_n$ has a repelling fixed point $g^{-\infty}$ and an attracting
fixed point $g^\infty$ on the boundary $\partial F_n$. We define the
\textbf{dual lamination} $L(T)$ of $T$ as:
\[
  L(T):= \bigcap_{m=1}^\infty \overline{\left\{(g^{-\infty},g^\infty) \in \partial^2F_n\ |\ \ell_T(g)<\frac{1}{m}\right\}},
\]
which is indeed a lamination by construction. Intuitively, $L(T)$ records which
elements of $F_n$ act on $T$ with arbitrarily small translation length. 
Each element of $L(T)$ is called a \textbf{leaf}. We say a finitely-generated
subgroup $H$ of $F_n$ \textbf{carries a leaf of $L(T)$}, if there is a leaf
$\ell \in L(T)$ such that $\ell \in \partial^2H$. (Recall that $\partial^2H \subset
\partial^2F_n$.) This is equivalent to saying that either some element of $H$ fixes a point
in $T$ (thus having an element of translation length 0), or the
restricted action of $H$ to a minimal $H$-invariant subtree $T_H \subset
T$ is not discrete (thus having elements whose translation lengths converge to zero).

A train track structure on a graph is called \textbf{recurrent} if there is a legal loop
that crosses every edge. The following lemma will be used to prove \Cref{thm:arational_limiting}.

\begin{lemma}[{\cite[Lemma 7.1]{bestvina2015boundary}}]
  \label{lem:recurrentloop}
  Let $T \in CV_n$ and let $\Sigma$ be a fixed simplex in $CV_n$. Then there
  exists a point $T_0$ in $\Sigma$ such that every optimal map $f: T_0 \to T$
  induces a recurrent train track structure on $T_0/F_n$.
\end{lemma}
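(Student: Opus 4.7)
The plan is to choose $T_0 \in \Sigma$ to be a minimizer of the Lipschitz distortion to $T$, and then to argue that at such a minimizer the tension graph of every optimal map $f : T_0 \to T$ is all of $G_0 := T_0/F_n$, from which the recurrence of the induced train track structure will follow.

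First I would define $\sigma : \overline{\Sigma} \to \R_{>0}$ by $\sigma(S) := e^{d(S,T)}$, the Lipschitz constant of an optimal map $S \to T$. This function is continuous on the compact closed simplex and so attains a minimum. A standard perturbation argument (shortening an edge whose slope is already submaximal and feeding the length elsewhere) rules out minimizers on faces where some edge has length zero, placing the minimum $T_0$ in the open simplex $\Sigma$.

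Next, for any optimal map $f : T_0 \to T$, I would show $\Delta_f = G_0$. If some edge $e$ has slope $\lambda_e < \sigma(f)$ under $f$, then there is a one-parameter family of metrics in $\Sigma$ which shortens every edge of $\Delta_f$ by $\epsilon$ and lengthens $e$ by the volume-compensating amount; a first-order analysis shows that the new optimal map has Lipschitz constant strictly less than $\sigma(f)$, contradicting the minimality of $T_0$. Hence every edge of $G_0$ is stretched by $f$ by the factor $\sigma(f)$.

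With $\Delta_f = G_0$, the remaining task is to show that the induced train track structure admits a legal immersed loop crossing every edge. Suppose not, and let $G' \subset G_0$ be the subgraph consisting of edges lying on some legal immersed loop, so that either $G' \subsetneq G_0$ or no single legal loop spans all edges of $G_0$. In either case, I would construct a length perturbation of $T_0$ supported on the non-recurrent part that strictly decreases $\sigma$. The heuristic is that edges not crossed by any legal loop carrying the maximal stretch are decoupled from the Lipschitz constant on the recurrent part, so their contribution to $\sigma$ can be traded off against edges in $G'$ to produce a net decrease, again contradicting the minimality of $T_0$.

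The main obstacle will be executing the final perturbation argument rigorously. Naively shrinking the non-recurrent part while inflating $G'$ can raise slopes inside $G'$ and leave the maximum slope unchanged. To overcome this, I expect to need a Perron--Frobenius-style analysis of the transition data of the induced train track restricted to $G'$, combined with the minimality of the $F_n$-action on $T$, to identify a genuine direction of decrease. In particular, the argument should exploit that legal loops provide an invariant positive vector on the transition matrix of $G'$, which prevents slopes on $G'$ from rising under the chosen infinitesimal deformation.
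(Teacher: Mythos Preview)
The paper does not prove this lemma; it is quoted without proof from \cite[Lemma~7.1]{bestvina2015boundary}. Your outline is the standard argument (going back to Francaviglia--Martino and reproduced in the cited reference), and steps~1--3 are correct.

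The gap is in your last paragraph, and your diagnosis there is backwards. Inflating an edge of the \emph{domain} lowers its slope under a fixed map $f$, so inflating $G'$ cannot raise slopes inside $G'$; the actual obstruction is that \emph{shrinking} the non-recurrent edges raises their slopes above $\sigma(f)$, so $f$ itself no longer witnesses a decrease. This is not fixed by Perron--Frobenius or by minimality of the $F_n$-action, but simply by abandoning $f$ and using the identity
\[
e^{d(S,T)} \;=\; \max_{\alpha}\,\frac{\ell_T(\alpha)}{\ell_S(\alpha)},
\]
the maximum taken over the finitely many candidate loops (embedded circles, figure-eights, barbells) in $S/F_n$. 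At the minimizer $T_0$ the maximizing candidates are exactly the legal loops and hence lie in $G'$; your perturbation strictly increases their $T_0$-length and so strictly lowers their ratios, while the finitely many non-maximizing candidates were already bounded strictly below $\sigma$ and remain so under a small enough perturbation. Thus $d(\cdot,T)$ strictly drops, contradicting minimality, and every edge lies on a legal loop. Your case~(b) is then purely combinatorial: once every edge lies on a legal loop, every vertex has at least two gates, and any two legal loops sharing a vertex can be concatenated (after possibly reversing one) into a single legal loop; connectivity of $G_0$ then yields a legal loop crossing every edge.
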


\subsection{Arational limiting objects}
\label{ssec:arationalprelim}

An $\R$-tree $T \in \ov{CV_n}$ is \textbf{arational} if for every proper free factor $H \le F_n$,
the $F_n$ action restricted to $H$ is free and discrete. According to Reynolds
\cite[Theorem 1.1]{reynolds2012reducing}, there are two kinds of arational
trees. The \textbf{geometric} arational trees are dual to a filling measured
lamination on a punctured surface (so the conjugacy class of the puncture is
elliptic). The \textbf{non-geometric} arational trees are free. A lamination $L$
is called \textbf{arational} if no leaf of $L$ is carried by a proper free
factor of $F_n$. Hence, $T \in \partial CV_n$ is arational if $L(T)$ is
arational. Bestvina--Reynolds \cite[Theorem 1.1]{bestvina2015boundary} proved
the boundary of the free factor complex, $FF_n$, 
for $F_n$ is homeomorphic to the quotient
space of the space of
arational trees by equating trees with the same dual lamination.
For $n \ge 3$, the free factor complex is the simplicial complex with vertices corresponding to
the conjugacy classes of nontrivial proper free factors of $F_n$ and, for $k=1,\ldots,n-2$, with
$k$-simplices $[A_0,\ldots,A_k]$ corresponding to the filtration $A_0 < A_1
<\ldots <A_k$ of proper containment up to conjugation. When $n=2$, we modify the
edge relation so that $A,B$ are joined by an edge if and only if $A,B$ form a
basis for $F_2$.

We consider a coarsely well-defined projection $\pi : cv_n \to FF_n$, given by
$\pi(T)$ for $T \in cv_n$ as the collection of free factors represented by
subgraphs of $T/F_n$. Note that the map $\pi$ factors through $CV_n$.

The usefulness of this projection $\pi$ to $FF_n$ is that
it sends geodesics in $cv_n$ (so, in particular, folding paths) to
quasi-geodesics in $FF_n$, up to reparametrization.
A \textbf{folding path} (or \textbf{unfolding path}) is a collection of metric graphs $G_t$
parametrized by $t \in [0,\infty]$ ($t \in (-\infty, 0]$ respectively), such that
for $t<t'$ we have maps $f_{tt'}:G_t\to G_{t'}$ and for $t<t'<t''$, we have 
$f_{tt''}=f_{t't''}f_{tt'}$ such that $f_{tt'}$ are isometries on edges,
but do not necessarily send vertices to vertices. Then a folding sequence (or
unfolding sequence) we defined earlier in \Cref{ssec:sequences} is just a discrete subset of a folding path (unfolding path,
respectively) such that vertices are mapped to vertices.

More precisely, for $K >0$ and a metric space $X$, we say a function $f:[0,\infty) \to X$ is a
\textbf{reparametrized K-quasi-geodesic} if there is a (finite or infinite) sequence $0=t_0<t_1<\ldots<\infty$ such that $\diam(f([t_i,t_{i+1}]))
\le K$ for all $i$ and $|i-j| \le d(f(t_i),f(t_j))+2$, and either:
\begin{itemize}
\item the sequence $(t_i)$ is infinite and $t_i \to \infty$, or
\item the sequence $(t_i)$ is finite and $\diam(f([t_m,\infty))) \le K$ where
  $t_m$ is the largest term in the sequence.
\end{itemize}
In other words, $f$ is a $K$-quasi-geodesic after some reparametrization by
$t_i$'s. Then as stated above, we have 
\begin{proposition}[{\cite[Corollary 6.5 and Proposition 9.2]{bestvina2014hyperbolicity}}]
  \label{prop:projectionQG}
  Let $G_t$ be a folding path in $cv_n$. Then $\pi(G_t)$ in $FF_n$ is a reparametrized $K$-quasi-geodesic
  with $K$ depending only on $n$.
\end{proposition}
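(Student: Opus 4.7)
The plan is to verify directly the two conditions in the definition of a reparametrized $K$-quasi-geodesic applied to the curve $t \mapsto \pi(G_t)$ in $FF_n$. I would first choose the reparametrization $0=t_0<t_1<\cdots$ so that consecutive graphs satisfy $d(G_{t_i},G_{t_{i+1}})=1$ in the Lipschitz metric on $cv_n$, which is possible since a folding path is a continuous geodesic ray. Then I need to verify the bounded-diameter condition for each $\pi([t_i,t_{i+1}])$ and the linear-progress lower bound on $d(\pi(G_{t_i}),\pi(G_{t_j}))$.

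For the bounded-diameter condition, I would establish that $\pi$ is coarsely Lipschitz from the Lipschitz metric to $FF_n$. If $d(G,G')\le 1$, then an optimal map $f:G\to G'$ has bounded Lipschitz constant, and any proper subgraph of $G$ representing a free factor maps, after collapsing inessential pieces of its image, to a subgraph of $G'$ representing either the same or a nearby free factor. This forces $\pi(G)$ and $\pi(G')$ to have Hausdorff distance in $FF_n$ bounded in terms of $n$ only, which gives the first condition with $K$ depending only on $n$.

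For the progress condition, I would construct a coarse Lipschitz retraction $\rho : FF_n \to \pi(G_t)$. For each proper free factor $A \le F_n$, define $t_A$ to be a canonical parameter at which $A$ is first carried by a proper subgraph of $G_{t_A}/F_n$, and set $\rho(A)=\pi(G_{t_A})$. Adjacency (or proper containment up to conjugacy) in $FF_n$ translates into $|t_A - t_B|$ being bounded, so the retraction is coarsely Lipschitz. Using the hyperbolicity of $FF_n$, a standard argument then gives that the image path is a reparametrized quasi-geodesic: the existence of a coarsely Lipschitz retract from the ambient hyperbolic space onto the path simultaneously prevents the path from skipping ahead and from folding back on itself.

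The hard part will be the retraction step -- controlling precisely how a single fold alters the set of free factors represented by subgraphs of the quotient. A fold can merge or split several subgraph-factors at once, and I would need to show that $t_A$ is well-defined up to bounded ambiguity and that it varies coarsely monotonically with the inclusion lattice of proper free factors carried by subgraphs. This is where Bestvina--Feighn's detailed analysis of folding paths, legal laminations, and folding competitors does the real work in \cite{bestvina2014hyperbolicity}, and I would follow their approach rather than attempt a shortcut, since the combinatorics of subgraph-factors along a folding path does not seem to admit a more direct treatment.
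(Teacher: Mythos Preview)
The paper does not prove this proposition; it is quoted as a black box from \cite{bestvina2014hyperbolicity}, so there is no argument to compare your proposal against. Your sketch is, in broad outline, the strategy of that reference: coarse Lipschitzness of $\pi$ together with a coarse Lipschitz retraction onto the image, combined with hyperbolicity of $FF_n$, yields the reparametrized quasi-geodesic property.

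That said, two points in your plan would not work as written. First, choosing the $t_i$ so that $d(G_{t_i},G_{t_{i+1}})=1$ in the Lipschitz metric on $cv_n$ does not by itself yield the progress condition $|i-j|\le d_{FF_n}(\pi(G_{t_i}),\pi(G_{t_j}))+2$: a folding path can spend arbitrarily long Lipschitz time with $\pi$ coarsely constant (e.g.\ while a short loop stays short), so consecutive $t_i$ can have nearly equal $\pi$-images. The correct logic is that the retraction argument plus hyperbolicity already forces the image to be an unparametrized quasi-geodesic, and the reparametrization is then chosen \emph{a posteriori} in $FF_n$, not from the Lipschitz metric. Second, your proposed retraction $\rho(A)=\pi(G_{t_A})$ with $t_A$ the first time $A$ is carried by a proper subgraph is not well defined: a generic proper free factor $A$ is never carried by any proper subgraph of any $G_t$ along the path. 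The Bestvina--Feighn projection instead sends $A$ to the point on the path where the core of the $A$-cover first becomes a single legal segment (or an analogous transition), and verifying this is coarsely Lipschitz is exactly the delicate part you allude to. So your final paragraph is right that the real content lies there, but your candidate $\rho$ would need to be replaced by theirs before the argument can proceed.
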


To study arational trees further, we consider another `measure' on an
$\R$-tree other than length: a \emph{current}. 
Define a \textbf{current} as an $F_n$-invariant and flip-invariant Radon measure $\mu$ on
$\partial^2F_n$. See \cite{reiner:thesis,ilya:currents} for an
introduction to currents on free groups.
Denote by $\Curr(F_n)$ the set of currents, and by
$\bbP\Curr(F_n)$ the set of projective classes of currents.

For each element $g \in F_n$, we can construct a current $\eta_g \in \Curr(F_n)$
as follows. First, suppose $g$ is \textbf{primitive}, that is, there is no
element $h \in F_n$ and $m \ge 2$ such that $g=h^m$. Denote by $[g]$ the
conjugacy class of $g$. Then we define
\[
  \eta_g := \sum_{h \in [g]}\left( \delta_{(h^{-\infty},h^\infty)} +
    \delta_{(h^{\infty}, h^{-\infty})} \right),
\]
where $\delta_{(x,y)}$ denotes the Dirac measure concentrated at the point
$(x,y) \in \partial^2F_n$: namely, for $A \subset \partial^2F_n$:
\[
  \delta_{(x,y)}(A) =
  \begin{cases}
    1 & \text{ if $(x,y) \in A$,}\\
    0 & \text{ otherwise}.
  \end{cases}
\]
Now for a non-primitive element $g = h^m$ for some $h \in F_n$ and $m \ge 2$, we
define $\eta_g := m\eta_h$. Then we call $\eta_g$ the \textbf{counting current}
given by $g$.

Kapovich--Lustig \cite[Theorem A]{kapovich2009intersection} constructed the
$\Out(F_n)$-invariant continuous pairing, the \textbf{intersection form},
\[
  \langle  -, - \rangle: \ov{cv_n} \times \Curr(F_n) \to \R_{\ge 0},
\]
which is $\R_{\ge 0}$-homogeneous in the first coordinate, and $\R_{\ge 0}$-linear
in the second coordinate. Moreover, $\langle T, \eta_g \rangle =
\ell_T(g)$ for $T \in cv_n$ and all counting currents $\eta_g$. The intersection
form relates $\R$-trees and currents as dual objects.

Denote by $M_n$ the unique minimal closed invariant subset of
$\Curr(F_n)$ under the $\Out(F_n)$-action~\cite[Section 5.8]{martin1997nonuniquely}. 
Now, for each $F_n$-tree $T \in \ov{cv_n}$, we define $T^*$ as the set of
currents dual to $T$, namely:
\[
  T^*:= \{\mu \in  M_n\ |\ \langle T, \mu \rangle = 0\}.
\]

Here we collect some properties of $T^*$. 

\begin{lemma}[{\cite[Theorem 4.4 and Remark 4.6]{bestvina2015boundary}}]\label{lem:propsTstars}
  Let $T, T_1, T_2 \in \ov{cv_n}$. Then the following hold.
  \begin{enumerate}[label=(\roman*)]
  \item $T^* = \{0\}$ if and only if $T \not\in \partial cv_n$.
    \item $T$ is arational if and only if $T^* \neq \{0\}$ and any two
      nonzero $\mu_1,\mu_2 \in T^*$ have the same support.
    \item Suppose $T_1$ is arational and $T_1^* \subseteq T_2^*$. Then 
      $T_2$ is arational, $T_1^*=T_2^*$ and $L(T_1)=L(T_2)$.
    \item Suppose $T_2$ is arational and $\{0\} \neq T_1^* \subseteq T_2^*$. Then
      $T_1$ is arational, $T_1^*=T_2^*$ and $L(T_1)=L(T_2)$.
  \end{enumerate}
\end{lemma}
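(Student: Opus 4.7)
The plan is to derive all four parts from two structural facts: the \emph{support theorem}, asserting that for any nonzero $\mu \in T^*$ one has $\supp(\mu) \subseteq L(T)$ and conversely every leaf of $L(T)$ is accumulated by supports of currents in $T^*$; together with the characterization that $T$ is arational precisely when $L(T)$ is a nonempty \emph{minimal} closed $F_n$-invariant lamination.

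For (i), if $T \in CV_n$ then the free and discrete action gives $\ell_T(g) > 0$ for all nontrivial $g$, so $L(T) = \emptyset$ and the support theorem forces $T^* = \{0\}$. Conversely, for $T \in \partial CV_n$ the action is either non-free, producing an elliptic $g$ whose normalized counting current lies in $T^*$, or non-discrete, producing a sequence $g_n$ with $\ell_T(g_n) \to 0$ whose normalized counting currents converge, by weak-$*$ compactness of $\bbP M_n$ and continuity of $\langle \cdot,\cdot\rangle$, to a nonzero element of $T^*$. For (ii), if $T$ is arational the minimality of $L(T)$ forces $\supp(\mu) = L(T)$ for every nonzero $\mu \in T^*$; conversely, a common support among all nonzero currents in $T^*$ must equal $L(T)$ by the support theorem and rules out any proper $F_n$-invariant sublamination of $L(T)$, which would necessarily exist if $T$ were non-arational (being carried by a proper free factor) and would support a measure in $T^*$ of strictly smaller support.

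For (iii) and (iv), fix a nonzero $\mu \in T_1^* \subseteq T_2^*$. In case (iv), arationality of $T_2$ together with (ii) gives $\supp(\mu) = L(T_2)$, while $\mu \in T_1^*$ forces $\supp(\mu) \subseteq L(T_1)$; the reverse inclusion $L(T_1) \subseteq L(T_2)$ holds because every leaf of $L(T_1)$ is approximated by supports of currents in $T_1^* \subseteq T_2^*$, which lie in the closed set $L(T_2)$. Thus $L(T_1) = L(T_2)$, making $T_1$ arational. Case (iii) starts analogously, using $\supp(\mu) = L(T_1)$ to obtain $L(T_1) \subseteq L(T_2)$; the reverse inclusion is more delicate, since we do not yet know $T_2$ is arational, and relies on additional structural results from \cite{bestvina2015boundary}, in particular that the dual lamination characterizes an arational tree up to the natural equivalence within the very-small compactification. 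Once $L(T_1) = L(T_2)$ in either case, the equality $T_1^* = T_2^*$ follows because for arational trees $T^*$ is the full cone of $F_n$-invariant measures on the common dual lamination.

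The main technical obstacle is the `reverse inclusion' step in case (iii) and the full strength of the support theorem; both rely on delicate analysis of the Kapovich--Lustig intersection form and the classification of very-small $F_n$-trees in $\ov{CV_n}$ recorded in \cite{bestvina1994outerlimits,cohen1995verysmall,horbezverysmall}.
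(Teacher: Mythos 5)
The paper gives no proof of this lemma; it is cited verbatim from Bestvina--Reynolds \cite[Theorem 4.4 and Remark 4.6]{bestvina2015boundary}, so there is no internal argument to compare yours against. Judged on its own as a reconstruction, your sketch has a plausible overall shape but several substantive gaps.

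The most concrete problem is in (i): recall that $T^* = \{\mu \in \bbP M_n \mid \langle T, \mu\rangle = 0\}$, where $\bbP M_n$ is the minimal closed $\Out(F_n)$-invariant subset of $\bbP\Curr(F_n)$. Your argument for the converse produces a nonzero current with $\langle T,\mu\rangle = 0$ (a counting current of an elliptic element, or a weak-$*$ limit of normalized counting currents of a sequence $g_n$ with $\ell_T(g_n)\to 0$), but you never check that $\mu$ actually lies in $\bbP M_n$. Counting currents of arbitrary non-primitive elements, and limits of such, are not automatically in $\bbP M_n$; establishing that $T\in\partial CV_n$ forces $T^*\cap\bbP M_n\neq\{0\}$ is the real content of this direction, and your sketch skips it. Relatedly, the "converse" half of your support theorem --- that every leaf of $L(T)$ is accumulated by supports of currents in $T^*$ --- is not the Kapovich--Lustig statement, which only gives $\langle T,\mu\rangle = 0 \iff \supp\mu \subseteq L(T)$; you invoke the stronger claim to deduce $L(T_1)\subseteq L(T_2)$ in (iii)/(iv), and it is far from clear that it holds for arbitrary very small trees, precisely because of the $\bbP M_n$ restriction.

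Finally, your closing step asserts that "for arational trees $T^*$ is the full cone of $F_n$-invariant measures on the common dual lamination," which is essentially equivalent to the equality $T_1^*=T_2^*$ you are trying to prove, so as written the argument is circular at that point. You also acknowledge that the reverse inclusion in (iii) relies on unspecified structural results from \cite{bestvina2015boundary}; but that is exactly the delicate direction (one must rule out $T_2$ being non-arational while $T_1^*\subseteq T_2^*$), and a reconstruction needs to do actual work there rather than defer. For (ii), the claim that a proper free factor carrying a leaf of $L(T)$ yields a current in $T^*$ of strictly smaller support likewise needs the $\bbP M_n$ membership verified (for instance via a primitive element of the factor) before it can be used.
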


Note that for any trees $T_1, T_2\in \overline{cv_n}$ which are projectively equivalent, $T_1^* = T_2^*$. In particular, the property of a tree being arational is scale-invariant by \Cref{lem:propsTstars}(ii). Hence it is reasonable to consider the simplices of arational trees lying in $\overline{CV_n}$.

It is convenient to call two arational $\R$-trees $T_1,T_2$ {\bf
  equivalent} if $T_1^*=T_2^*$, or, equivalently, if $L(T_1)=L(T_2)$. An
equivalence class of arational trees forms a simplex in $\partial
CV_n$, and the subspace of arational trees with the equivalence
classes identified to a point is homeomorphic to the boundary of the
free factor complex, (see \cite{bestvina2015boundary} and
\cite{hamenstaedt2014boundary}).
Furthermore, equivalent arational trees are equivariantly homeomorphic
with respect to the observers' topology, (see \cite[Proposition
3.2]{bestvina2015boundary} and \cite[Proposition 7.1]{hamenstaedt2014boundary}).

In \Cref{thm:arational_limiting} below, we realize arational trees as limits of folding sequences. 
For the proof, we begin with producing a {\it folding path} $G_t$ converging to the desired
arational tree. 
 When a folding path $G_t$ in $cv_n$ converges to a tree $T
 \in \ov{CV_n}$, we mean the path given by the projection 
 $\hat{G}_t$ of $G_t$ to $CV_n$ converges to $T$.
Then, in \Cref{lem:arational_path_to_seq}, we will modify the folding path so that when
restricted to a discrete set of times we obtain a folding sequence.

The following two lemmas are the tools from Bestvina--Reynolds
\cite{bestvina2015boundary} that we need to produce the desired folding path.
\begin{lemma}[{\cite[Theorem 6.6]{bestvina2015boundary}}, U-turn lemma]
  \label{lem:U-turn}
  Let $\{[S_i, T_i]\}_{i \ge 0}$ be a sequence of folding paths in $CV_n$ beginning at $S_i$ and ending at $T_i$, and $U_i \in [S_i,T_i]$
  be a point on the path. Suppose $S_i \to S \in \ov{CV_n}$, $T_i \to T \in
  \ov{CV_n}$ and $U_i \to U \in \ov{CV_n}$. Then either $U^* \subset T^*$, or
  $S^* \cap U^* \neq \{0\}$.
\end{lemma}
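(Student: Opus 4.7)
The plan is to exploit monotonicity of the intersection form along folding paths: working in the unprojectivized Outer space $cv_n$, for a fixed current $\mu \in \Curr(F_n)$ and a folding path $G_t$, I expect $t \mapsto \langle G_t, \mu \rangle$ to be non-increasing, since folds only produce cancellations in cyclically reduced paths and this property extends from counting currents to all currents by density and continuity of the intersection form. Lifting each $[S_i, T_i]$ to $cv_n$, I would fix specific representatives $\tilde{S}_i, \tilde{U}_i, \tilde{T}_i$ on the same folding path, together with scalars $a_i, b_i, c_i > 0$ so that $a_i \tilde{S}_i \to S$, $b_i \tilde{U}_i \to U$, and $c_i \tilde{T}_i \to T$ in $\R_{\geq 0}^{F_n}$.

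One may assume $U^* \neq \{0\}$, since otherwise $U^* = \{0\} \subset T^*$ trivially. Take any nonzero $\mu \in U^*$, and construct currents $\mu_i \in \Curr(F_n)$ with $\mu_i \to \mu$ projectively and $b_i \langle \tilde U_i, \mu_i \rangle \to 0$ (using that $\langle U, \mu \rangle = 0$, continuity of the intersection form, and density of counting currents in $\bbP M_n$). Monotonicity along $\tilde S_i \to \tilde U_i \to \tilde T_i$ yields
\[
\langle \tilde{S}_i, \mu_i \rangle \;\geq\; \langle \tilde{U}_i, \mu_i \rangle \;\geq\; \langle \tilde{T}_i, \mu_i \rangle,
\]
and the proof then splits according to the asymptotics of the ratio $c_i / b_i$. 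If $c_i / b_i$ stays bounded along a subsequence, then $c_i \langle \tilde{T}_i, \mu_i \rangle \to 0$ as well, so $\langle T, \mu \rangle = 0$ by continuity, giving $\mu \in T^*$. Since $\mu \in U^*$ was arbitrary, this establishes the inclusion $U^* \subset T^*$.

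If instead $c_i / b_i \to \infty$ along every subsequence, then the folding between $\tilde U_i$ and $\tilde T_i$ has consumed most of the decreasing volume, which forces $a_i / b_i$ to remain bounded. I would then rescale the $\mu_i$ so that $a_i \langle \tilde S_i, \mu_i \rangle$ stays bounded but bounded away from zero, extract a projective subsequential limit $\mu' \in \bbP M_n$ by compactness, and use monotonicity to conclude that both $\langle U, \mu' \rangle = 0$ and $\langle S, \mu' \rangle = 0$. The result is the desired nonzero $\mu' \in S^* \cap U^*$.

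The main obstacle is managing three simultaneous projective normalizations while preventing the limiting currents from leaving the minimal set $\bbP M_n$. The key auxiliary inputs should be the reparametrized quasi-geodesic projection of folding paths to the free factor complex from \Cref{prop:projectionQG}, which rules out pathological escape to infinity, and the closed $\Out(F_n)$-invariance of $\bbP M_n$ together with \Cref{lem:propsTstars}, which pin down the supports of the limits.
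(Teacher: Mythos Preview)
The paper does not prove this lemma; it is quoted verbatim from \cite{bestvina2015boundary}. So there is no in-paper argument to compare against, and the question is simply whether your sketch stands on its own.

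Your Case~1 is sound. Monotonicity of $t\mapsto\langle G_t,\mu\rangle$ along an unprojectivized folding path is correct (it holds for counting currents since folds only create cancellation in immersed loops, and extends by density and continuity), and with $\mu_i\equiv\mu$ you get $c_i\langle\tilde T_i,\mu\rangle\le (c_i/b_i)\,b_i\langle\tilde U_i,\mu\rangle\to 0$ whenever $c_i/b_i$ stays bounded, hence $\mu\in T^*$. Since the subsequence does not depend on $\mu$, this gives $U^*\subset T^*$.

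Case~2 has a real gap. The assertion that $c_i/b_i\to\infty$ forces $a_i/b_i$ to remain bounded is not justified: volumes decrease along the folding path, but the normalizing constants $a_i,b_i,c_i$ are governed by how the \emph{projective} limits $S,U,T$ sit in $\overline{CV_n}$, not by raw volume, and there is no a priori coupling between the two ratios. Even granting that claim, your construction of $\mu'$ does not work as written. You propose to rescale the $\mu_i$ so that $a_i\langle\tilde S_i,\mu_i\rangle$ is bounded away from zero and then pass to a projective limit, but that normalization is exactly the one that makes $\langle S,\mu'\rangle\neq 0$, the opposite of what you need. Monotonicity goes the wrong way here: knowing $b_i\langle\tilde U_i,\mu\rangle\to 0$ gives you nothing about $a_i\langle\tilde S_i,\mu\rangle$ since $\langle\tilde S_i,\mu\rangle\ge\langle\tilde U_i,\mu\rangle$. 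Finally, you have not said how to keep the limit inside $\bbP M_n$; invoking \Cref{prop:projectionQG} and \Cref{lem:propsTstars} in the last paragraph is a gesture, not an argument. The actual proof in \cite{bestvina2015boundary} handles this alternative via a more careful analysis of how currents in $\bbP M_n$ interact with the folding path and its projection to $FF_n$; the missing idea is not a rescaling trick but a genuine use of the geometry of $FF_n$ to produce a current dual to both $S$ and $U$.
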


\begin{lemma}[{\cite[Lemma 7.3]{bestvina2015boundary}}, realizing folding path]
  \label{lem:realizing_folding}
  Let $S_i,T_i \in CV_n$ for $i \ge 0$, and assume that
  \begin{enumerate}[label=(\alph*)]
  \item all $S_i$ are in the same simplex as $S_0$, 
  \item there is a morphism $f_i : S_i \to T_i$ such that the induced train track
    structure on $S_i$ is recurrent, and
  \item $T_i \to T \in \partial CV_n$.
  \end{enumerate}
  If $T$ is arational, then $S_i \to S \in CV_n$ and certain initial segments of the folding path
  $\gamma_i$ induced  by $f_i$ converge uniformly on compact sets to a folding
  path $\gamma$ from $S$ that converges to $U \in \partial CV_n$ with $U^*
  \subseteq T^*$.
  \end{lemma}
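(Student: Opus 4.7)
The plan is to use compactness of the closed simplex containing the $S_i$ to extract a subsequential limit $S$, show that $S$ lies inside $CV_n$, extract the limit folding path $\gamma$ from $S$ via Arzel\`a--Ascoli, and apply the U-turn lemma (\Cref{lem:U-turn}) together with $S^* = \{0\}$ to identify the endpoint of $\gamma$.

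The main obstacle is showing $S_i \to S \in CV_n$. By compactness of the closed simplex $\overline\Sigma$ containing all the $S_i$, after passing to a subsequence we may assume $S_i \to S \in \overline\Sigma$, so the remaining task is to rule out edge length degenerations. I would argue by contradiction: if some edge $e$ has $\ell_{S_i}(e) \to 0$, then $S$ is supported on a proper subgraph of $\Sigma$'s underlying graph, picking out a proper free factor $A < F_n$. The recurrent legal loop $w$ provided by hypothesis (b) must pass through $e$ by recurrence, so pushing $w$ forward along $f_i$ into $T_i$ and then taking $T_i \to T$ should localize a leaf of $L(T)$ inside $\partial^2 A$. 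This contradicts arationality of $T$ via \Cref{lem:propsTstars}, so all edge lengths of $S$ remain positive and $S \in CV_n$. I expect this step to be the technical heart of the proof, since translating the combinatorial data of recurrence and the asymptotic data of $T_i \to T$ into an actual positive lower bound on $\ell_{S_i}(e)$ requires carefully tracking the morphisms $f_i$ on the collapsing edges.

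With $S \in CV_n$ in hand, for each $L > 0$ the initial segments $\gamma_i|_{[0,L]}$ are equi-Lipschitz paths in $cv_n$ whose starting points converge to $S$, so Arzel\`a--Ascoli combined with a diagonal argument over $L \to \infty$ produces a folding path $\gamma : [0,\infty) \to cv_n$ from $S$ to which certain initial segments of $\gamma_i$ converge uniformly on compact sets. Since $d(S_i, T_i) \to \infty$ forces $\gamma$ to have infinite length, its projection to $FF_n$ is an infinite reparametrized quasi-geodesic by \Cref{prop:projectionQG}, and hence $\gamma(t)$ exits every compact subset of $CV_n$; any subsequential limit $U$ of $\gamma(t_k)$ therefore lies in $\partial CV_n$. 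Choosing $U_i \in [S_i, T_i]$ diagonally with $U_i \to U$ and applying \Cref{lem:U-turn} yields $U^* \subseteq T^*$ or $S^* \cap U^* \neq \{0\}$. The second case is impossible since $S \in CV_n$ gives $S^* = \{0\}$ by \Cref{lem:propsTstars}(i), so $U^* \subseteq T^*$ as claimed.
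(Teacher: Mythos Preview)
This lemma is not proved in the paper; it is cited from \cite[Lemma~7.3]{bestvina2015boundary} and used as a black box. Your overall architecture---compactness of the closed simplex to extract $S$, Arzel\`a--Ascoli plus a diagonal argument for the limiting folding path, then \Cref{lem:U-turn} combined with $S^*=\{0\}$ to force $U^*\subseteq T^*$---matches both the cited reference and the way the present paper deploys the lemma in the proof of \Cref{thm:arational_limiting} (and the analogous Arzel\`a--Ascoli extraction in the Claim inside the proof of \Cref{thm:arational_limiting_unfolding}).

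Your argument for $S\in CV_n$ is confused, though. You take $A$ to be the free factor carried by the surviving subgraph and claim that pushing the recurrent legal loop $w$ forward into $T$ produces a leaf of $L(T)$ inside $\partial^2 A$. But $w$ is a single conjugacy class, not a lamination leaf, and since $w$ crosses the collapsing edge $e$ it is precisely \emph{not} carried by $A$, so there is no mechanism by which it lands in $\partial^2 A$. The actual role of recurrence is as a length normalizer: legality of $w$ gives the equality $\ell_{T_i}(w)=\sigma(f_i)\,\ell_{S_i}(w)$, whereas an arbitrary $g$ only satisfies $\ell_{T_i}(g)\le\sigma(f_i)\,\ell_{S_i}(g)$, so $\ell_{T_i}(g)/\ell_{T_i}(w)\le \ell_{S_i}(g)/\ell_{S_i}(w)$. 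Now take $g$ to be a loop in the \emph{collapsing} subgraph (this, not $A$, is the proper free factor you should be looking at), use $\ell_{S_i}(w)\ge \vol(S_i)=1$ since $w$ crosses every edge, and pass to the limit to get $\ell_T(g)=0$ for an element of a proper free factor---contradicting arationality of $T$. The paper carries out exactly this step in the rose case inside the proof of \Cref{thm:arational_limiting}: ``if a petal's length limits to zero, the element of $F_n$ represented by it would have length $0$ in $T$.''
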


Now we are ready to prove that every arational tree on
the boundary of Outer space is \emph{reachable} by a folding
sequence.

\begin{theorem}
  \label{thm:arational_limiting}
  For each arational tree $T$ on $\partial CV_n$, there exists a reduced folding
  sequence limiting to $T$. \end{theorem}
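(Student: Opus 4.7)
The plan is to build a folding path converging to an arational tree $U$ with $U^* = T^*$, then discretize it into a folding sequence and adjust edge lengths so that the limit is $T$ itself, and finally verify reducedness.

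First, I would fix a simplex $\Sigma$ in $CV_n$, for instance the open simplex of the rose $R_n$, and choose a sequence $T_i \in CV_n$ with $T_i \to T$. By Lemma~\ref{lem:recurrentloop}, for each $i$ there is a point $S_i \in \Sigma$ such that some optimal map $f_i : S_i \to T_i$ induces a recurrent train track structure on $S_i/F_n$. After passing to a subsequence (so that the $S_i$ remain in a common simplex), the hypotheses of Lemma~\ref{lem:realizing_folding} are met, yielding a folding path $\gamma$ from some $S \in CV_n$ converging to a tree $U \in \partial CV_n$ with $U^* \subseteq T^*$. Since $U \in \partial CV_n$, Lemma~\ref{lem:propsTstars}(i) gives $U^* \neq \{0\}$, whereupon Lemma~\ref{lem:propsTstars}(iv) forces $U$ to be arational with $U^* = T^*$ and $L(U) = L(T)$. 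Thus $U$ and $T$ are equivalent arational trees, and hence are $F_n$-equivariantly homeomorphic with respect to the observers' topology.

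Next, I would extract a folding sequence from $\gamma$ by selecting times $0 = t_0 < t_1 < \cdots \to \infty$ at which the transition maps $f_{t_s, t_{s+1}}$ send vertices to vertices (subdividing edges if necessary). Setting $G_s := \gamma(t_s)$ produces a folding sequence $(G_s)_{s \ge 0}$ whose topological limit tree $\hat{T}$ is equivariantly homeomorphic to $U$, and hence to $T$. Transferring the metric of $T$ along this equivariant homeomorphism gives an equivariant length measure on $\hat{T}$, which by Theorem~\ref{thm:NPR-foldingsequence} corresponds to a compatible length measure $(\vec{\ell}_s)_{s \ge 0}$ on $(G_s)$. Equipping the sequence with these edge lengths, Lemma~\ref{lem:folding_limiting_tree} ensures the metric folding sequence converges to $T$.

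Finally, I would check reducedness: any proper invariant subgraph sequence $\{H_s\}_{s \ge 0}$ persisting to arbitrarily large $s$ would descend to a nontrivial proper $F_n$-invariant subtree in the limit tree, contradicting the minimality of the arational tree $T$. Hence the folding sequence constructed above is automatically reduced (or becomes so after passing to a subsequence and discarding eventually-empty subgraph sequences). The main obstacle is the metric transfer in the middle step: one must verify that the observers'-topology equivalence of $U$ and $T$ is genuinely compatible with the topological limit tree $\hat{T}$ of the folding sequence, so that the pullback of $T$'s metric is a positive compatible length measure on $\hat{T}$ rather than degenerating on some subtree. This is precisely where the mixing property of arational limit trees (noted after Theorem~\ref{thm:NPR-foldingsequence}) and the linear isomorphism of Theorem~\ref{thm:NPR-foldingsequence} do the essential work, since together they rule out any collapsing in passing from $\hat{T}$ to the associated $\mathbb{R}$-tree.
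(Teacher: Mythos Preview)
Your overall strategy matches the paper's closely: produce a folding path to an arational $U$ equivalent to $T$ via Lemmas~\ref{lem:recurrentloop}, \ref{lem:realizing_folding}, and \ref{lem:propsTstars}, discretize to a sequence, then use Theorem~\ref{thm:NPR-foldingsequence} to adjust lengths so the limit is $T$ itself. Two of your steps, however, contain real gaps.

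The discretization step is where the work lies, and ``subdividing edges if necessary'' does not suffice. In a folding path the maps $f_{tt'}$ are only immersions on open edges; vertices need not map to vertices, and naively subdividing targets to receive vertex images cascades forward, producing graphs $G_s$ with unbounded edge counts and taking you outside the setting in which Theorem~\ref{thm:NPR-foldingsequence} and the notion of reducedness are useful. The paper devotes a separate lemma (Lemma~\ref{lem:arational_path_to_seq}) to this, and arationality is used essentially: because the projection to $FF_n$ is unbounded, any point of $G_t$ eventually lands on a vertex and every edge eventually surjects onto a later graph. One then factors $G_t\to G_{t'}$ through an intermediate $H$ obtained by folding illegal turns at non-rigid vertices until they hit preimages of a fixed rigid vertex (or a single tracked point, if no rigid vertices exist), so that all vertices of $H$ map to vertices of $G_{t'}$ without unbounded subdivision.

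Your reducedness argument is also incorrect. A persistent proper invariant subgraph sequence $\{H_s\}$ does not descend to a proper $F_n$-invariant subtree of $T$: the lifts $\tilde H_s\subset T_s$ are only invariant under conjugates of $\pi_1(H_s)$, not under all of $F_n$, so minimality of $T$ is not contradicted. Worse, the $H_s$ could be forests, which carry no subgroup at all yet still violate reducedness. The correct argument, again in Lemma~\ref{lem:arational_path_to_seq}, is that arationality forces every edge of $G_t$ to map onto all of $G_{t'}$ for some $t'>t$ (otherwise the $FF_n$-projection would be eventually constant), which immediately rules out any proper invariant subgraph sequence. Note also a logical ordering issue: Theorem~\ref{thm:NPR-foldingsequence} requires the sequence to be reduced, so reducedness must be established \emph{before} you invoke it to transfer the metric of $T$, not afterward.
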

\begin{proof}
  Let $T \in \partial CV_n$ be arational.
  Then there exists a sequence $\{T_i\}_{i \ge 0}$ in $CV_n$ converging to $T$. Fix a
  simplex $\Sigma$ in $CV_n$ represented by a marked rose $(R_n,m)$ with $n$
  petals. By \Cref{lem:recurrentloop}, for each $T_i$ we can find an $F_n$-tree $S_i \in
  \Sigma$ with an optimal map $f_i:S_i \to T_i$ inducing a recurrent train track
  structure on $S_i/F_n$. Let $c_i$ be a \emph{recurrent loop} in $S_i/F_n$; i.e. a legal loop in $S_i/F_n$ crossing every
  edge of $S_i/F_n$. As $S_i/F_n$ is a finite graph, so it can have at most finitely many train track
  structures. Hence, we may pass to a subsequence and assume $\{S_i/F_n\}_{i \ge 0}$
  have the same train track structure and the same recurrent
  loop. Taking a further subsequence, we may assume $S_i$
  converges to $S \in \ov{CV_n}$. We will show $S \in CV_n$ by proving that none of
  the petals in $S_i/F_n$ have lengths limiting to 0 or $\infty$. To that end, first note that 
  having a recurrent loop implies the length
  of each petal in $S_i/F_n$ does not blow up to infinity, before normalizing. On the other hand, if a petal's length limits to zero, the
  element of $F_n$ represented by it would have length 0 in $T$,
  which contradicts the assumption that $T$ is arational. All in all, $S
  \not\in\partial CV_n$, so $S \in CV_n$. In particular, by
  \Cref{lem:propsTstars}(i), it follows that $S^*=\{0\}$.

  Now, each optimal map $f_i:S_i \to T_i$ induces a folding path
  $[S_i,T_i]$. Then by \Cref{lem:realizing_folding}, we can find a folding
  path $\gamma$ from $S$ converging to some $U\in\partial CV_n$ and
  with $U^*\subseteq T^*$. By \Cref{lem:propsTstars}(iv), we have that
  $U$ is arational and equivalent to $T$. Then, \Cref{lem:arational_path_to_seq} promotes this
  folding path to a reduced folding sequence.
  Finally, by \Cref{thm:NPR-foldingsequence},
  modifying the length measures on the reduced folding sequence $(U_i)_{i \ge 0}$ converging to $U$, we obtain a reduced folding sequence converging to $T$.
\end{proof}

\begin{lemma}\label{lem:arational_path_to_seq}
  Suppose $G_t$ is a folding path converging to an arational tree
  $T$. Then there is a reduced folding sequence converging to $T$.
\end{lemma}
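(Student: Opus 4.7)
The plan has two stages. First, I promote the continuous folding path $G_t$ to a discrete folding sequence. Second, I modify the resulting sequence so it becomes reduced, using arationality of $T$ in an essential way.

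For the first stage, I choose an increasing sequence of times $t_0<t_1<t_2<\cdots$ with $t_i\to\infty$. The maps $f_{t_it_{i+1}}\colon G_{t_i}\to G_{t_{i+1}}$ along the folding path immerse each edge but need not send vertices to vertices. To repair this, I subdivide each $G_{t_i}$ by declaring every point in the preimage of a vertex of $G_{t_{i+1}}$ to be a vertex; after this subdivision the map $f_{t_it_{i+1}}$ becomes a morphism, and the collection $(G_{t_i})_{i\ge 0}$ is a folding sequence. Because $G_t$ converges projectively to $T$ and $t_i\to\infty$, \Cref{lem:folding_limiting_tree} ensures that the discretized folding sequence still converges to $T$.

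For the second stage, suppose that the folding sequence produced above fails to be reduced. Then there is $i_0\ge 0$ and a sequence of proper subgraphs $H_i\subsetneq G_{t_i}$ for $i\ge i_0$ that all contain edges and satisfy $\phi_{t_it_j}(H_i)\subseteq H_j$ for $i_0\le i<j$. Each $H_i$ lifts to an $F_n$-invariant subforest of $\widetilde{G_{t_i}}$, and so determines (up to conjugation) a proper free factor $H\le F_n$. Because folds within $H_i$ stay inside $H_i$, the restricted sequence $(H_i)_{i\ge i_0}$ is itself a folding sequence inside $H$, and in particular edge lengths within $H_i$ are non-increasing in the unprojectivized metric, so the translation length $\ell_{G_{t_i}}(h)$ of any $h\in H$ is uniformly bounded above by $\ell_{H_{i_0}}(h)$. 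On the other hand, $T$ is arational, so $H$ acts freely and discretely on its minimal invariant subtree $T_H\subset T$ and every nontrivial $h\in H$ has $\ell_T(h)>0$ in the projective class representing $T$. Tracking the ratios $\ell_{G_{t_i}}(h)/\ell_{G_{t_i}}(g)$ for a suitable element $g$ whose translation length survives projectively in $T$, the uniform bound on the numerator forces $\ell_T(h)=0$ projectively, contradicting arationality. Hence no persistent proper invariant subgraph exists, and the sequence is reduced. Alternatively, one may actively remedy the situation by collapsing the invariant subgraphs $H_i$ or by refining the times $t_i$ so that the folds occurring outside the invariant subgraphs are isolated; these procedures yield a reduced sequence that still converges to $T$.

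The main obstacle is the reducedness argument: one must carefully handle the projective normalization and confirm that the limiting translation length function of $T$ really does see $H$-elements as projectively positive, which is where arationality is used crucially. Once that is established, the discretization step is routine.
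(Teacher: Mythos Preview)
Your Stage~1 has a genuine gap. Subdividing $G_{t_i}$ at preimages of vertices of $G_{t_{i+1}}$ makes $f_{t_it_{i+1}}$ a morphism, but the definition of folding sequence requires that \emph{all finite compositions} be morphisms. Once you subdivide $G_{t_{i+1}}$ at preimages of vertices of $G_{t_{i+2}}$, those new vertices acquire preimages in $G_{t_i}$ that were not declared vertices, so $G_{t_i}\to G_{t_{i+2}}$ fails to send vertices to vertices. Fixing this by further subdividing $G_{t_i}$ cascades backwards indefinitely: $G_{t_0}$ would need to be subdivided at the preimages of all future vertices, a potentially infinite set. The paper avoids this by exploiting a forward-stable feature of the folding path: images of \emph{rigid} vertices (those with at least three gates) are again rigid. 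The paper produces intermediate graphs $H$ by folding illegal turns at non-rigid vertices of $G_t$ until they either hit a preimage of a chosen rigid vertex in $G_{t'}$ or become rigid themselves; then every vertex of $H$ maps to a rigid vertex of $G_{t'}$, which remains a vertex under all subsequent maps. (When no rigid vertex exists, the paper tracks a single point $v_t$ instead.) This is the key idea you are missing.

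Your Stage~2 is also incomplete. A persistent proper invariant subgraph $H_i$ need not carry any loop at all---it could be a forest---in which case no free factor $H$ arises and your translation-length argument does not apply, yet the sequence is still not reduced. Even when $H_i$ is noncontractible, the free factor it represents can shrink with $i$, so speaking of a single $H$ requires justification. The paper sidesteps all of this: it first argues, using the projection to $FF_n$ and \Cref{prop:projectionQG}, that for every $t$ there is $t'>t$ such that every edge of $G_t$ maps onto all of $G_{t'}$ (otherwise the path would be coarsely constant in $FF_n$, contradicting convergence to an arational tree). Reducedness of the constructed sequence is then immediate.
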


\begin{proof}
  Let $T\in \overline{CV_n}$ be arational and let $G_t$ be a folding path
  converging to $T$.
  First note that for any point $v_t\in G_t$, there is some $t'>t$ such
  that $f_{tt'}(v_t)\in G_{t'}$ is a vertex. Indeed, otherwise, the
  images of $v_t$ would define a free splitting of $F_n$ which does not
  change along the folding path. In particular, the path is eventually
  constant in the free factor complex (and even in the free splitting
  complex) so the limiting tree cannot be arational (\cite[Corollary
  4.5]{bestvina2015boundary}.) 
   Similarly, for every $t$ there is $t'>t$ so that every edge of $G_t$ maps onto $G_{t'}$, for otherwise we would have
  proper subgraphs $H_t\subset G_t$ that are preserved by $f_{tt'}$
  and eventually contain loops, so the path would be coarsely constant
  in $FF_n$ and would not converge to an arational tree.  

  Call a vertex of $G_t$ \textbf{rigid} if it has at least 3
  gates. Images of rigid vertices along the folding path are rigid
  vertices. First, suppose that $G_t$ eventually contains at least one
  rigid vertex. For a given $t$, choose $t'\gg t$ so that $G_{t'}$ contains
  a rigid vertex $v$ and, further, that the preimage of $v$ intersects every open edge of
  $G_t$. Note that the second condition will hold for some such $t'$
  by the eventual surjectivity of the maps $f_{tt'}$ when restricted
  to each edge of $G_t$. 
  Now,
  fold each illegal turn 
  based at non-rigid vertices of $G_t$ until
  either the folding reaches a point in the preimage of $v$, or else the illegal turn
  becomes legal and thus creates another rigid vertex. We call this folded graph
  $H$. 
  This folding procedure gives a factorization of 
   $G_t\to G_{t'}$ through $G_t\to H\to G_{t'}$, where the
  second map is a morphism which maps every vertex to
  a rigid vertex. Iterating this process for $G_{t'} \to G_{t''}$ for some $t''>t'$, the sequence of graphs $H$ produced in
  this way is the desired folding sequence.

  Now, suppose no $G_t$ has any rigid vertex. Pick a point $v_0\in
  G_0$ and let $v_t\in G_t$ be its image in $G_t$. Again by the first
  paragraph there is a sequence $t_i\to\infty$ so that $v_{t_i}$ is a
  vertex. Moreover, after passing to a subsequence, the preimage of $v_{t_{i+1}}$
  in $G_{t_i}$ intersects every open edge. Fold every illegal turn of
  $G_{t_i}$ not
  based at $v_{t_i}$ until a point in the preimage of $v_{t_{i+1}}$ is reached. This time, an
  illegal turn cannot become legal as that would create a rigid
  vertex. We again get a factorization as above where now all vertices
  of $H$ map to $v_{t_{i+1}}$ and this gives the desired sequence.

  In either case, the folding sequence is necessarily reduced, again by the first
  paragraph. 
\end{proof}

Similarly, we prove that every lamination dual to an arational tree is
realizable as the \emph{diagonal closure} of the legal lamination of an unfolding sequence.
Here we first relate the legal lamination in terms of trees in the
accumulation set of the unfolding sequence.

\begin{lemma}[{\cite[Page 5]{NPR2014}}]\label{lem:arational_legal_lamination}
  Let $(G_s)_{s \le 0}$ be an unfolding sequence with the legal lamination
  $\Lambda$. Then for any tree $T$ in the accumulation set of $(G_s)_{s \le 0}$,
  \[
    \Lambda \subseteq L(T).
  \]
\end{lemma}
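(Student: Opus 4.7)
The plan is to associate to each leaf $\ell \in \Lambda$ a nonzero current $\nu \in \Curr(F_n)$ with $\ell \in \supp(\nu)$ that pairs trivially with $T$, and then to deduce $\ell \in L(T)$ from the standard fact that any current in $T^*$ has support inside $L(T)$.

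To build $\nu$, I would realize $\ell$ as the bi-infinite legal projection $\bar\ell_s$ of its lift $\tilde\ell_s \subset \tilde G_s$ in each finite graph $G_s/F_n$, and let $\vec\mu_s^{(N)} \in \R_{\ge 0}^{|EG_s|}$ record the edge crossings of a length-$N$ sub-segment of $\bar\ell_s$. Because the sub-segment is legal and $\phi_{s,s+1}$ is an immersion on edges, no reduction occurs upon pushing forward, so $\vec\mu_{s+1}^{(N)} = M^{s,s+1}\vec\mu_s^{(N)}$. After normalizing these vectors by their total length in $G_s$ (which is $s$-invariant by the isometry-on-edges property) and extracting a diagonal subsequential limit, one obtains a compatible sequence $(\vec\mu_s)_{s \le 0}$, that is, a width measure. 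By \Cref{thm:NPR-unfoldingsequence}, it corresponds to a nonzero current $\nu$ supported on $\Lambda$, and $\ell \in \supp(\nu)$ because the approximating vectors are built directly from $\bar\ell_s$.

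The key computation is the transpose invariance of the length-width pairing. From $\vec\ell_s = (M^{s,s+1})^T\vec\ell_{s+1}$ and $\vec\mu_{s+1} = M^{s,s+1}\vec\mu_s$,
\[
  \vec\ell_s \cdot \vec\mu_s \;=\; \left((M^{s,s+1})^T\vec\ell_{s+1}\right)\cdot \vec\mu_s \;=\; \vec\ell_{s+1} \cdot \vec\mu_{s+1},
\]
so $C := \vec\ell_s \cdot \vec\mu_s$ is independent of $s$ and equals the Kapovich--Lustig intersection $\langle G_s, \nu \rangle$. Writing $T = \lim_k G_{s_k}/L_{s_k}$ with $L_{s_k} = \vol(G_{s_k}) \to \infty$, continuity of the intersection form forces $\langle T, \nu \rangle = \lim_k C/L_{s_k} = 0$. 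Thus $\nu \in T^*$, and so $\ell \in \supp(\nu) \subseteq L(T)$.

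The main obstacle I foresee is in the construction of $\nu$: guaranteeing that the normalized crossing vectors do not degenerate to zero in the limit and that $\ell$ genuinely persists in the support of the resulting current. This uses the finiteness of $EG_s$ (so that at least one edge orbit of $G_s/F_n$ is crossed with positive density by $\bar\ell_s$) together with a diagonal compactness argument across the countably many indices $s \le 0$; the pairing normalization $\vec\ell_s \cdot \vec\mu_s = 1$ bookkeeps non-degeneration uniformly in $s$.
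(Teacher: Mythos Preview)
The paper does not supply a proof of this lemma; it simply cites \cite[Page~5]{NPR2014}. So there is no in-paper argument to compare against, and your proposal must stand on its own.

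Your pairing computation is correct and is the heart of a genuine argument: for \emph{any} width measure $(\vec\mu_s)$ on the unfolding sequence with associated current $\nu$, the quantity $\vec\ell_s\cdot\vec\mu_s = \langle G_s,\nu\rangle$ is independent of $s$, so dividing by $\vol(G_{s_k})\to\infty$ gives $\langle T,\nu\rangle=0$ and hence $\supp(\nu)\subseteq L(T)$ by Kapovich--Lustig. This already proves that the union of supports of all currents carried by $\Lambda$ lies in $L(T)$.

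The gap is the passage from this to $\ell\in L(T)$ for an \emph{arbitrary} leaf $\ell\in\Lambda$. You assert $\ell\in\supp(\nu)$ ``because the approximating vectors are built directly from $\bar\ell_s$'', but this does not follow. Having some edge crossed with positive density along $\ell$ only shows $\nu\neq 0$; for $\ell\in\supp(\nu)$ you would need \emph{every finite sub-path} of $\ell$ to recur in $\ell$ with positive frequency, and nothing in the hypotheses forces this. A leaf of the form $\ldots aaa\,b\,aaa\ldots$ (a single $b$ in a sea of $a$'s) has empirical current supported entirely on the periodic leaf $\ldots aaa\ldots$, and the original leaf is neither in that support nor in its diagonal closure. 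The lemma as stated places no minimality or recurrence hypothesis on $\Lambda$ that would exclude such leaves.

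The standard route avoids currents and works leafwise. Since $\ell_s$ is a bi-infinite immersed path in the \emph{finite} graph $G_s$, any window of $2|EG_s|+1$ consecutive edges contains two occurrences of the same oriented edge, yielding a \emph{legal} loop $\gamma_s\subset\ell_s$ of combinatorial length at most $2|EG_s|$ through any prescribed point of $\ell_s$. One then argues directly from the definition of $L(T)$ that the conjugacy classes $[\gamma_s]$ have translation length in $T$ tending to $0$ while their axes in $G_0$ fellow-travel $\ell$ on arbitrarily long segments. This produces the required approximating sequence in $\overline{\{(g^{-\infty},g^\infty):\ell_T(g)<1/m\}}$ without ever needing $\ell$ to lie in the support of a current.
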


For a subset $X \subset \partial^2F_n$, we say a leaf $\ell=(g,h) \in \partial^2F_n$ is
\textbf{diagonal over $X$} if there exist $p>1$ and $x_1,\ldots, x_{p-1} \in
\partial F_n$ such that
\[
  (g, x_1), (x_1, x_2), \ldots, (x_{p-1},h) \in X.
\]
We say $X$ is \textbf{diagonally closed} if every leaf in
$\partial^2F_n$ diagonal over $X$ is contained in $X$. We remark that dual
laminations of trees are diagonally closed \cite[Page 751]{CHL2008RtreeII}, but
legal laminations may not be.

A result of Coulbois--Hilion--Reynolds \cite[Theorem A]{CHR2015indecomposable},
shows that the dual lamination of an indecomposable $F_n$-tree is minimal up to
diagonal leaves. That is, if there is a sublamination $\Lambda \subseteq L(T)$,
then $\overline{\Lambda} = L(T)$, where $\overline{\Lambda}$ denotes the
diagonal closure of $\Lambda$. As every arational tree is indecomposable
\cite[Theorem 1.1]{reynolds2012reducing}, we have the following lemma which
states that the dual lamination of an arational tree is \emph{almost
  minimal}.

\begin{lemma}\label{lem:arational_minimal}
  Let $T$ be an arational tree in $\partial CV_n$. Then $L(T)$ is minimal up to
  diagonal leaves.
\end{lemma}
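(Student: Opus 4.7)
The plan is to combine the two external results cited in the paragraph immediately preceding the statement. The lemma asserts that for an arational $T \in \partial CV_n$, the dual lamination $L(T)$ is minimal up to diagonal leaves, meaning every nonempty $F_n$-invariant, flip-invariant closed sublamination $\Lambda \subseteq L(T)$ satisfies $\overline{\Lambda} = L(T)$, where $\overline{\Lambda}$ is the diagonal closure.

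First, I would invoke Reynolds' classification of arational trees, specifically \cite[Theorem 1.1]{reynolds2012reducing}, which tells us that any arational $F_n$-tree is indecomposable. (Recall indecomposability is the standard strengthening of mixing: any two non-degenerate arcs in $T$ admit finite coverings by translates of each other.) Since our $T$ is arational by hypothesis, $T$ is indecomposable.

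Next, I would apply Coulbois--Hilion--Reynolds \cite[Theorem A]{CHR2015indecomposable}, which asserts precisely that the dual lamination of an indecomposable very small $F_n$-tree is minimal up to diagonal leaves. This immediately gives the conclusion for $L(T)$.

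There is essentially no obstacle here beyond citing the right results; the lemma is a direct corollary. The only small caveat to address in the write-up is to confirm that the notion of ``minimal up to diagonal leaves'' used in \cite{CHR2015indecomposable} agrees with the definition given above the lemma, i.e., that every sublamination has diagonal closure equal to the whole dual lamination. This follows at once from diagonal closedness of dual laminations (noted in the paragraph above the lemma, citing \cite[Page 751]{CHL2008RtreeII}): if $\Lambda \subseteq L(T)$ is a nonempty sublamination, then $\overline{\Lambda} \subseteq L(T)$ since $L(T)$ is diagonally closed, and the reverse containment comes from the Coulbois--Hilion--Reynolds theorem.
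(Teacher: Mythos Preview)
Your proposal is correct and follows essentially the same argument as the paper: you invoke Reynolds to deduce that arational trees are indecomposable, and then Coulbois--Hilion--Reynolds to conclude that $L(T)$ is minimal up to diagonal leaves. The additional remark you make about reconciling definitions is fine but not strictly necessary.
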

\begin{proof}
  An arational tree is indecomposable by Reynolds \cite[Theorem
  1.1]{reynolds2012reducing}, and an indecomposable tree has dual lamination that
  is minimal up to diagonal leaves by \cite[Theorem A]{CHR2015indecomposable}.
\end{proof}

We are now ready to show that the dual lamination of an arational tree can be realized as the diagonal closure of the legal lamination for an unfolding sequence.

\begin{theorem}\label{thm:arational_limiting_unfolding}
  Let $\Lambda$ be a lamination that is dual to an arational tree $T$.
  Then there exists a reduced unfolding sequence whose legal lamination's diagonal
  closure is $\Lambda$.
\end{theorem}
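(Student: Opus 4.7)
The plan is to reduce the problem using \Cref{lem:arational_minimal}: since $L(T)$ is minimal up to diagonal leaves, it suffices to exhibit an unfolding sequence $(H_s)_{s\le 0}$ whose legal lamination $\Lambda_{\mathrm{leg}}$ is a \emph{nonempty} sublamination of $\Lambda = L(T)$. The containment $\Lambda_{\mathrm{leg}} \subseteq \Lambda$ will follow from \Cref{lem:arational_legal_lamination} once we arrange for some tree in the accumulation set of $(H_s)$ to be arational and equivalent to $T$; the nonemptiness is the part that will require care.

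I would mirror the strategy behind the proof of \Cref{thm:arational_limiting}, in the unfolding direction. Start with a sequence $T_i \to T$ in $CV_n$, and for each $i$ use \Cref{lem:recurrentloop} to produce an $S_i \in \Sigma$ (a fixed simplex, say a marked rose) together with an optimal map $f_i : S_i \to T_i$ whose induced train track structure on $S_i/F_n$ carries a recurrent loop $c_i$. After passing to subsequences, assume all the train track structures and recurrent loops agree with a fixed pair $(\tau, c)$, and that $S_i \to S \in CV_n$. Applying \Cref{lem:realizing_folding} yields a folding path $\gamma$ from $S$ converging to some $U \in \partial CV_n$ with $U^* \subseteq T^*$; by \Cref{lem:propsTstars}(iv), $U$ is arational and equivalent to $T$, so $L(U) = L(T) = \Lambda$.

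The key technical step is to convert this folding data into an \emph{unfolding} sequence whose accumulation set captures $U$. The idea is to reindex the folding path so that the natural sequence of type I folds along $\gamma$, together with subdivisions at chosen vertex images, is presented as a sequence of morphisms $H_s \to H_{s+1}$ with $s \le 0$ and $H_0$ a graph near $S$; for times $s_k \to -\infty$, the rescaled graphs $H_{s_k}$ will accumulate on $U$. A diagonal argument paralleling \Cref{lem:arational_path_to_seq} then promotes the construction to a genuine reduced unfolding sequence. For the nonemptiness of $\Lambda_{\mathrm{leg}}$, the recurrent loop $c$ provides a bi-infinite periodic legal path in every $H_s$: since $c$ is $\tau$-legal and $\tau$ is preserved throughout the construction, the image of $c^{\pm\infty}$ in $\partial^2 F_n$ gives an explicit leaf of $\Lambda_{\mathrm{leg}}$. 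Combining \Cref{lem:arational_legal_lamination} ($\Lambda_{\mathrm{leg}} \subseteq L(U) = \Lambda$) with \Cref{lem:arational_minimal} (any nonempty sublamination has diagonal closure $\Lambda$) then yields $\overline{\Lambda_{\mathrm{leg}}}^{\,\mathrm{diag}} = \Lambda$ as required.

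The main obstacle is the reindexing step in the third paragraph: unlike the folding case, where \Cref{lem:realizing_folding} directly produces the desired limit behavior, here the morphisms along $\gamma$ go in the ``wrong'' direction for an unfolding sequence, and we must carefully arrange the order of folds and the choice of vertex preimages so that (a) composing them gives an honest sequence of morphisms $H_s \to H_{s+1}$ that are homotopy equivalences, (b) the accumulation set contains an arational tree equivalent to $T$, and (c) the train track structure inherited from $\tau$ persists, so that the legal loop $c$ survives as a leaf of $\Lambda_{\mathrm{leg}}$.
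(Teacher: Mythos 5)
Your outline diverges from the paper's argument at the decisive step, and the divergence is a genuine gap that your proposal identifies but does not close. You apply \Cref{lem:recurrentloop} in the same configuration as \Cref{thm:arational_limiting}: fix a simplex $\Sigma$, find $S_i \in \Sigma$ with optimal maps $f_i \colon S_i \to T_i$, and invoke \Cref{lem:realizing_folding} to extract a folding path $\gamma$ from $S$ toward an arational tree $U$ with $U^* \subseteq T^*$. But the morphisms along $\gamma$ run from $S$ forward to $U$, which is the \emph{opposite} of what an unfolding sequence requires: an unfolding sequence $(H_s)_{s\le 0}$ needs maps $H_s \to H_{s+1}$, so that graphs far back in time (near the accumulation point) fold toward $H_0$. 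You cannot ``reindex'' or ``arrange the order of folds'' to reverse this, because a fold morphism has no well-defined inverse morphism; calling this the ``main obstacle'' is accurate, but the sketch offers no mechanism to overcome it, and there is none available along these lines.

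The paper resolves this by setting up the optimal maps in the \emph{opposite} orientation from the start. For each $T_i \to T$, it uses \Cref{lem:recurrentloop} to find $T_i'$ in the simplex \emph{of $T_i$} (not a fixed $\Sigma$) together with an optimal map $f_i \colon T_i' \to S$ into a \emph{fixed} target $S \in CV_n$. Each $\gamma_i = [T_i', S]$, reparametrized on $[t_i, 0]$ with $\gamma_i(0) = S$ and extended by a constant to $(-\infty, 0]$, is then already a finite unfolding ray ending at $S$ with the morphisms running in the correct direction. Pointwise boundedness comes from properness of $d(\cdot, S)$, equicontinuity from the arc-length parametrization, and Arz\'el\`a--Ascoli gives a limiting unfolding ray $\gamma$ whose accumulation set contains $T' = \lim T_i'$, which lies in the same simplex of $\partial CV_n$ as $T$ and is therefore arational. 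This construction makes no use of \Cref{lem:realizing_folding}. Your proposal and the paper agree on the finish (\Cref{lem:arational_legal_lamination} plus \Cref{lem:arational_minimal} to upgrade $\Lambda' \subseteq L(T')$ to $\ov{\Lambda'} = L(T)$), but the middle of the argument needs to be rebuilt around the reversed orientation.

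A secondary issue: your claim that the recurrent loop $c$ furnishes an explicit leaf of $\Lambda_{\mathrm{leg}}$ is unsupported and appears false. By \Cref{lem:arational_legal_lamination}, $\Lambda_{\mathrm{leg}} \subseteq L(T')$, so if $(c^{-\infty}, c^{\infty})$ were a leaf of $\Lambda_{\mathrm{leg}}$ one would have $(c^{-\infty}, c^{\infty}) \in L(T')$, which for an arational (and, generically, free) tree $T'$ forces $\ell_{T'}(c) = 0$ — contradicting freeness for any nontrivial $c$. The nonemptiness of the legal lamination of a reduced unfolding sequence is a standard compactness fact and does not need the recurrent-loop argument.
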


\begin{proof}
  Let $\Lambda = L(T)$ for some arational tree $T$. Then $T\in \partial CV_n$,
  so we can take a sequence of trees $(T_i)_{i \ge 0}$ in $CV_n$ converging to $T$.
  Now fix a tree $S \in CV_n$. Using \Cref{lem:recurrentloop}, for each $i \ge 0$ we can find
  $T_i'$ from the simplex of $CV_n$ containing $T_i$ such that there is an
  optimal map $f_i : T_i' \to S$ inducing a recurrent train track structure on $T_i'$. Then both $(T_i)_{i \ge
    0}$ and $(T_i')_{i \ge 0}$ project to the same sequence of points in the free factor
  complex converging to a point on the boundary. Hence, denoting by $T'$ the
  limit of $(T_i')_{i \ge 0}$, we have that $T$ and $T'$ lie in the same simplex
  in $\partial CV_n$, so $T'$ is arational. Now let $\gamma_i$ be the
  folding (also unfolding, as it is a finite path)
  path induced by $f_i:T_i' \to S$. We claim:
  \begin{claime}[Realizing  unfolding path]
    After taking a subsequence of $(\gamma_i)_{i \le 0}$, it converges uniformly on compact sets
    to an unfolding path (ray) $\gamma$ to $S$ with accumulation set containing
    $T'$.
  \end{claime}
  The claim will conclude our
  proof, as the legal lamination $\Lambda'$ of an induced unfolding sequence
  from $\gamma$ has to be contained in $L(T')$ by
  \Cref{lem:arational_legal_lamination}.
  Since $L(T')$ is diagonally closed, the
  diagonal closure $\ov{\Lambda'}$ is also contained in $L(T')$. Then we can conclude
  $\ov{\Lambda'} = L(T')=L(T)$ by \Cref{lem:arational_minimal}. 

  Hence, it suffices to prove the claim.
  Giving the natural parametrization on $(\gamma_i)_{i \ge 0}$, followed by the
  reparametrization so that the terminal points of $\gamma_i$'s are parametrized by 0, we can write 
  $\gamma_i:[t_i,0] \to \ov{CV_n}$.
  We then extend each folding path $\gamma_i$ by a constant, so that we can rewrite
  $\gamma_i:(-\infty,0] \to \ov{CV_n}$ as an unfolding ray. Note that $(\gamma_i)_{i \ge 0}$ is
  pointwise bounded by the fact that the function $d_S: \tau\in CV_n \mapsto d(\tau,
  S)$ is proper. Indeed, for each $u \in (-\infty, 0]$, we have
  $K=d_S^{-1}([0,-u])$ is compact, so for each $i \le 0$,  $\gamma_i(u) \in K$
  is bounded. Also, the $\gamma_i$'s are equicontinuous by the same natural
  parametrization. Therefore, by the Arz{\'e}la--Ascoli theorem (See e.g.  \cite{collins2007asymmetric}), our rewritten sequence of unfolding rays 
  $(\gamma_i)_{i \ge 0}$ has a
  subsequence converging to a ray $\gamma:(-\infty, 0] \to \ov{CV_n}$ with $\gamma(0)=S$. We note
  that as $\gamma_i(t_i)=T_i' \to T'$ as $i \to -\infty$, we have that the accumulation
  set of $\gamma$ contains $T'$. The following lemma promotes this
  unfolding path to an unfolding sequence. \end{proof}

\begin{lemma}
  Let $G_t$ for $t\in (-\infty,0]$  be an unfolding path whose legal
    lamination, after taking its diagonal closure, is dual to an arational tree (or,
    equivalently, the projection of $G_t$ to $FF_n$ converges to a point in $\partial FF_n$). Then,
    there is a reduced unfolding sequence whose legal lamination has the same diagonal closure as that of $G_t$.
\end{lemma}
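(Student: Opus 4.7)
The plan is to adapt the proof of \Cref{lem:arational_path_to_seq} to the time-reversed unfolding setting. The hypothesis on $G_t$ ensures, via \Cref{prop:projectionQG}, that the projection $\pi(G_t)$ to $FF_n$ is a reparametrized quasi-geodesic converging as $t \to -\infty$ to a point in $\partial FF_n$; this plays the role that convergence to an arational tree played in the folding case.

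I would first verify two structural claims mirroring the first paragraph of the folding proof: (i) for every $t$, there is $s < t$ so that every vertex of $G_t$ has a vertex preimage in $G_s$ under the morphism $f_{s,t}\colon G_s \to G_t$; and (ii) for every $t$, there is $s < t$ so that every edge of $G_t$ has an edge of $G_s$ mapping onto it. Failure of (i) would yield a free splitting of $F_n$ preserved throughout the unfolding path, forcing the $FF_n$-projection to be eventually constant and contradicting convergence to a boundary point. Failure of (ii) would yield a proper invariant subgraph that similarly makes the projection coarsely constant.

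I would then split into two cases. In the case where some $G_t$ contains a rigid vertex (valence at least three), pick a sequence $0 = s_0 > s_1 > s_2 > \cdots \to -\infty$ so that each $G_{s_i}$ contains a rigid vertex whose preimage meets every open edge of $G_{s_{i+1}}$. For each consecutive pair, partially fold $G_{s_{i+1}}$ at illegal turns based at non-rigid vertices, stopping when either a preimage of a rigid vertex of $G_{s_i}$ is reached or a new rigid vertex is created. This factors the morphism $G_{s_{i+1}} \to G_{s_i}$ through an intermediate graph $H_i$, with the second map $H_i \to G_{s_i}$ being a morphism sending vertices to rigid vertices. The composition $H_{i+1} \to G_{s_{i+1}} \to H_i$ is then a morphism sending vertices to vertices, yielding an unfolding sequence $\cdots \to H_2 \to H_1 \to H_0 \to G_0$. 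The no-rigid-vertex case is handled exactly as in the folding proof, by tracking preimages of a single point $v_0 \in G_0$ and folding illegal turns not based at those preimages.

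The reduced property follows from claim (ii) applied to the extracted sequence. For the legal lamination, observe that each intermediate graph $H_i$ sits on the original unfolding path between $G_{s_{i+1}}$ and $G_{s_i}$, obtained by partial folding along the path. Consequently, a bi-infinite path immerses in $G_0$ via the sequence-maps if and only if it immerses via the path-maps, so the legal laminations of the sequence and the path coincide; in particular, their diagonal closures agree, yielding the desired arational dual lamination. The main obstacle, as in the folding case, is the combinatorial bookkeeping in the partial folding step to ensure that the composed maps $H_{i+1} \to H_i$ are genuine morphisms sending vertices to vertices; this requires stopping the partial fold neither too early nor too late relative to the preimage of the chosen rigid vertex in $G_{s_i}$.
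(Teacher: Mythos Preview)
Your construction of the intermediate graphs $H_i$ via partial folding is essentially the same as the paper's, and the two-case split (rigid vertex present versus absent) mirrors the paper's argument. However, there is a genuine gap in your treatment of reducedness.

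You assert that ``the reduced property follows from claim (ii) applied to the extracted sequence,'' but this is precisely where the unfolding case differs from the folding case. In the folding proof, reducedness was automatic because any proper invariant subgraph would force the projection to $FF_n$ to be eventually constant. In the unfolding direction this argument breaks down: an unfolding sequence can carry a proper invariant family of subgraphs $H_s \subset G_s$ even when its projection to $FF_n$ converges to a boundary point. The point is that such invariant subgraphs need only be \emph{forests} for large $-s$ (if they contained loops, one would indeed get coarse constancy in $FF_n$), and forests are not ruled out by your claim (ii), which is essentially just surjectivity of the transition morphisms.

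The paper handles this with an additional step: after extracting the sequence, one collapses each component of the invariant forest $H_n$ in $G_n$ to obtain graphs $G_n'$, checks (via \cite[Lemma 4.1]{handel-mosher:FSishyperbolic}) that the induced maps $G_{n-1}' \to G_n'$ form a reduced unfolding sequence, and then argues that the collapsed sequence fellow-travels the original in $FS_n$ and hence in $FF_n$, so it converges to the same point in $\partial FF_n$. This last fellow-traveling statement is what guarantees the diagonal closure of the legal lamination is unchanged. Your proposal would need to incorporate this collapsing-and-fellow-traveling argument; without it, the sequence you produce may simply fail to be reduced.
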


\begin{proof}
  This is a variant of the proof of
  \Cref{lem:arational_path_to_seq}. First note that the volume and the injectivity radius
  of $G_t$ go to $\infty$ as $t\to -\infty$ (or else the projection to
  $FF_n$ would eventually be constant). We again consider two
  cases.

  The first case is that for arbitrarily large $-t$, the graph $G_{t}$
  contains a rigid vertex. Fix some $t$. Then there is $t'<t$ so that
  every noncontractible subgraph of $G_{t'}$ maps onto all of $G_t$
  (if this fails for all $t'$, then a truncation of the path would be coarsely constant in
  $FF_n$). Consider all points in $G_{t'}$ that map to a rigid vertex
  in $G_t$. By the choice of $t'$, all complementary components of this
  set are trees. Now fold each illegal turn in each component until
  it either becomes legal and turns into a rigid vertex, or it reaches
  the boundary of the component, i.e., one of the points that maps to a
  rigid vertex in $G_{t}$. Thus we factored $G_{t'}\to G_t$ as
  $G_{t'}\to H_t\to G_t$ with the second map sending all vertices to
  rigid vertices. The
  graphs $H_t$ obtained in this way when $t\to -\infty$ will form an
  unfolding sequence with the same legal lamination.

  The second case is that after truncation there are no rigid vertices
  in any $G_t$. We choose $v_t\in G_t$ for every $t\leq 0$ so that for $t'<t$,
  $G_{t'}\to G_t$ maps $v_{t'}$ to $v_t$.

  Then, we can pass to a subsequence $t_i$ with $t_i\to-\infty$ such that each $v_{t_i}$ is a vertex of $G_{t_i}$ 
  (otherwise a truncation
  is coarsely constant in $FF_n$). As above, after a further
  subsequence, we may assume that the preimage of $v_{t_i}$ in
  $G_{t_{i-1}}$ has contractible complementary components. Folding
  illegal turns within each component produces a factorization
  $G_{t_{i-1}}\to H\to G_{t_i}$ with all vertices of $H$ mapping to
  $v_{t_i}$, and we get an unfolding sequence as above.

  It remains to arrange that the unfolding sequence, which we rename $G_n$, is reduced. Here this is not
  automatic as in the folding sequence case. However, any proper invariant
  subgraphs $H_n\subset G_n$ must be forests for large $-n$, since
  otherwise a truncated sequence would be coarsely constant in
  $FF_n$. After truncating we may assume all $H_n$ are forests. Let
  $G_n'$ be obtained from $G_n$ by collapsing each component of
  $H_n$. There are induced maps $G_{n-1}'\to G_n'$. The new sequence
  $G_n'$ is a reduced unfolding sequence, see \cite[Lemma
    4.1]{handel-mosher:FSishyperbolic}. Moreover, the new sequence
  fellow travels the old in $FS_n$ and hence in $FF_n$, so it
  converges to the same point in $\partial FF_n$ per \Cref{prop:projectionQG}.
\end{proof}

We also note that it is possible that an unfolding sequence
accumulates on the entire simplex of equivalent arational trees, see
\cite{bestvina2024limit}.

Before we move on the next section, we give a sketch of the proof of \Cref{thm:NPR-unfoldingsequence}.
\begin{proof}[Proof of \Cref{thm:NPR-unfoldingsequence}]
    Given a compatible collection of width measures $\{\mu_n\}_{n=0}^{-\infty}$ on a reduced unfolding sequence, we construct a current $\mu$ on the legal lamination $\Lambda$ of the sequence as follows. For finite directed paths $\gamma$ and $\eta$ in $G_0$, denote by $\langle\gamma,\eta\rangle$ the number of occurrences of $\gamma$ in $\eta$ as subwords, where letters are the edges of $G_0$. Then for each $n \in \Z_{\le 0}$ and a finite path $\gamma$ in $G_0$, we define
  \[
    \alpha_n(\gamma):= \sum_{e_n \in G_n} \mu_n(e_n) \langle \gamma, \phi_{n,0}(e_n)\rangle.
  \]
  It can be shown that the $\alpha_n(\gamma)$ is increasing as $n \to -\infty$, and $\{\alpha_n(\gamma)\}_{n \in \Z_{\le 0}}$ is bounded above. Hence, for each finite path $\gamma$ in $G_0$, the quantity $\mu(\gamma):= \lim_{n \to -\infty} \alpha_n(\gamma)$ is well defined. Then the set of numbers 
  \[
  \{\mu(\gamma) \mid \text{$\gamma$ is a finite path in $G_0$}\}
  \]
  satisfies the following: For each finite path $\gamma$ in $G_0$ and $\gamma_1\ldots \gamma_k$ are all possible extensions of $\gamma$ by an edge at the end of $\gamma$,
  \[
    \mu(\gamma) = \sum_{i=1}^k \mu(\gamma_i).
  \]
  By \cite[Lemma 7]{reiner:thesis} (called ``Kolmogorov extension property'' in \cite[Section 2.2]{NPR2014}), it follows $\mu$ is a current on $\pi_1(G_0)=F_n$. Finally, to show $\mu$ is supported on $\Lambda$, let $\eta$ be a leaf in the support of $\mu$. This means for every finite subpath $\gamma$ of $\eta$, we have $\mu(\gamma)>0$. By definition of $\mu$, this implies for every $m$ with sufficiently large $|m|$, there exists an edge $e'_m$ of $G_m$ such that $\langle \gamma, \phi_{m,0}(e'_m) \rangle>0$. Since $\phi_{m,0}(e'_m)=\phi_{n,0}(\phi_{m,n}(e'_m))$ is a concatenation of the $\phi_{n,0}$-image of an edge of $G_n$, taking $\gamma$ to be longer than the twice of the simplicial length of $\phi_{n,0}(e_n)$ for every $e_n \in G_n$, it follows that $\phi_{n,0}(e_n)$ must be a subpath of $\gamma$ for some $e_n$ of $G_n$, so it is a subpath of $\eta$. This implies that $\eta$ is a limit of $\phi_{n,0}(e_n)$ for a sequence of edges $e_n$ in $G_n$, so $\eta$ is contained in $\Lambda$, concluding the proof.
\end{proof}

\subsection{Transverse decomposition from ergodic measures}
\label{ssec:ergodic}
In this section, we present the Transverse Decomposition theorem for
(un)folding sequences. This decomposition of a graph into subgraphs reflects 
the simplex of measures on the corresponding limiting objects. It was first
proved by
Namazi--Pettet--Reynolds \cite{NPR2014}, and it was motivated by
the corresponding McMullen's theorem \cite{mcmullen2013diophantine} in the setting of Teichm\"uller
geodesics. We give an alternative proof of the Transverse
Decomposition theorem 
using only linear algebraic arguments. This is a vital ingredient toward
establishing the upper bound in \Cref{thm:Main_Upperbound}.

Recall that for the limiting tree $T$ of a reduced folding sequence and the
legal lamination $\Lambda$ of a reduced unfolding sequence, we denote by
$\cL(T)$ and by $\calC(\Lambda)$ the set of length measures on $T$ and the set
of currents on $\Lambda$, respectively. We then denote by $\bbP\cL(T)$ and
$\bbP\calC(\Lambda)$ the projectivizations of these spaces. These projectivized spaces can
be thought of concretely as spaces of length measures or currents,
normalized so that a particular conjugacy class has length 1 or
compact set has measure 1, respectively. In this way, $\bbP\cL(T)$ and $\bbP\calC(\Lambda)$ become compact, convex
sets in a locally convex topological vector space,
and we call their extremal points \textbf{ergodic}. If
$\bbP\cL(T)$ or $\bbP\calC(\Lambda)$ is a singleton, then we call $T$ or $\Lambda$, respectively,
\textbf{uniquely ergodic} and otherwise call $T$ or $\Lambda$ \textbf{nonuniquely ergodic}.

A \textbf{transverse decomposition} of a graph $G$ is a collection 
$H^0,H^1,\ldots,H^k$ of subgraphs of $G$ such that the edge set of $G$ is the \emph{disjoint union} of
the edge sets of $H^0,H^1,\ldots,H^k$. We denote the decomposition by $G =
H^0\sqcup H^1\sqcup\ldots \sqcup H^k$. Note that the vertices may overlap between the subgraphs
of the transverse decomposition. 

To state the transverse decomposition
results, we make some notations as follows.
Given a folding/unfolding sequence $(G_s)_s$, we
pass to a subsequence so that all $G_s$ are isomorphic graphs and we
take a fixed isomorphism from $G=G_0$ to $G_s$. Denote
by $e_s$ the edge of $G_s$ identified to $e$ in $G$ by the isomorphism. Then, for
a length measure $\ell$ on the limiting tree $T$ of a folding sequence $(G_s)_{s \ge
  0}$, we denote by $\ell_s$ the induced length measure on $G_s$ under the
isomorphism $\cL(T) \cong \cL((G_s)_{s \ge 0})$ given in
\Cref{thm:NPR-foldingsequence}. Also, for $e_s \in G_s$, we write
$\ell(e_s):=\ell_s(e_s)$.
Similarly, for a current $\mu$ on the legal lamination $\Lambda$ of an unfolding
sequence $(G_s)_{s \le 0}$, we denote by $\mu_s$ the induced current on $G_s$
under the isomorphism $\calC(\Lambda) \cong \calC((G_s)_{s \le 0})$ given in
\Cref{thm:NPR-unfoldingsequence}. We also write $\mu(e_s):=\mu_s(e_s)$ for $e_s \in
G_s$.

Now, we state the Transverse Decomposition theorems and give another proof of
the existence of such a decomposition. The original proofs are given in
\cite[Theorems 4.7, 5.6]{NPR2014}.

\begin{theorem}[Transverse Decomposition theorem for a Folding sequence] \label{thm:NPRdecomp-folding}
  Let $(G_s)_{s \ge 0}$ be a reduced folding sequence with limiting tree 
  $T$ and let $\ell^1, \ldots, \ell^k$ be the collection of all the distinct (up to scale) ergodic length measures supported on $T$. 
  Let $(\mu_s)_{s \ge 0}$ be the \emph{frequency current} on the folding
  sequence, namely $\mu_0(e)=1$ for every edge $e$ of $G_0$. Then, after passing
  to a subsequence, there is a transverse decomposition
  \[
    G_0 = H^0 \sqcup H^1 \sqcup \ldots \sqcup H^k,
  \]
  and 
  \begin{align*}
    H^i &:= \{e \in EG_0\ |\ \lim_{s \to \infty}\mu(e_s)\ell^i(e_s)>0\},\  \text{for $i=1,\ldots,k$}, \text{and}\\
    H^0 &:= \{e \in EG_0\ |\ \lim_{s \to \infty}\mu(e_s)\ell^i(e_s)=0,\ \text{for
          all $i=1,\ldots,k$}\}.
  \end{align*}
\end{theorem}

\begin{theorem}[Transverse Decomposition theorem for an Unfolding sequence] \label{thm:NPRdecomp-unfolding}
  Let $(G_s)_{s \le 0}$ be a reduced unfolding sequence with legal lamination
  $\Lambda$ and let $\mu^1,\ldots,\mu^k$ be the collection of all distinct (up to scale) ergodic probability currents supported on $\Lambda$. Let
  $(\ell_s)_{s \le 0}$ be the \emph{simplicial length measure} on the unfolding
  sequence, namely $\ell_0(e)=1$ for every edge $e$ of $G_0$. Then, after passing
  to a subsequence, there is a transverse decomposition
  \[
    G_0 = H^0 \sqcup H^1 \sqcup \ldots \sqcup H^k,
  \]
  where
  \begin{align*}
    H^i &:= \{e \in EG_0\ |\ \lim_{s \to -\infty}\ell(e_s)\mu^i(e_s)>0\}, \ \text{for $i=1,\ldots,k$}, \text{and} \\
    H^0 &:= \{e \in EG_0\ |\ \lim_{s \to -\infty}\ell(e_s)\mu^i(e_s)=0,\ \text{for
          all $i=1,\ldots,k$}\}.
  \end{align*}
\end{theorem}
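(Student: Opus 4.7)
My plan is to mirror the proof of the folding case \Cref{thm:NPRdecomp-folding} via three stages: a conservation law, extraction of a convergent subsequence, and a disjointness argument based on ergodicity.

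Using the compatibility conditions $\ell_s = (M^{s,s+1})^T \ell_{s+1}$ (lengths pull back) and $\mu^i_{s+1} = M^{s, s+1} \mu^i_s$ (currents push forward), I first compute
\[
\sum_{e \in EG_0} \ell(e_s)\mu^i(e_s) = \ell_s^T \mu^i_s = \ell_{s+1}^T M^{s, s+1} \mu^i_s = \ell_{s+1}^T \mu^i_{s+1}.
\]
By induction this sum is independent of $s$, equal to $C_i := \sum_{e \in EG_0} \mu^i_0(e)$, so each term $\ell(e_s)\mu^i(e_s)$ lies in $[0, C_i]$. Since $G_0$ has finitely many edges and there are only $k$ currents, a diagonal extraction yields a subsequence along which $\ell(e_s)\mu^i(e_s) \to a^i_e \in [0, C_i]$ for every $e$ and every $i \in \{1, \ldots, k\}$. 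I then define $H^i := \{e : a^i_e > 0\}$ for $i \ge 1$ and $H^0 := EG_0 \setminus \bigcup_{i \ge 1} H^i$, which together cover $EG_0$ by construction.

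The hard part will be proving that the $H^i$ for $i = 1, \ldots, k$ are pairwise disjoint. Suppose for contradiction some edge $e_0 \in H^i \cap H^j$ with $i \ne j$; along the subsequence, $\mu^i_s(e_0)/\mu^j_s(e_0) \to a^i_{e_0}/a^j_{e_0} > 0$. My plan is to find a constant $c > 0$ such that $\mu^i - c\mu^j$ remains a non-negative current in $\calC((G_s)_{s \le 0})$; then the decomposition $\mu^i = c\mu^j + (\mu^i - c\mu^j)$ as a sum of non-negative currents with $c\mu^j \ne 0$ would force $\mu^j \propto \mu^i$ by the extremality of the ray of $\mu^i$ in this cone, contradicting distinctness of these ergodic currents. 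Establishing $c > 0$ --- equivalently, a uniform lower bound $\inf_{s \le 0,\, e \in EG_s,\, \mu^j_s(e) > 0} \mu^i_s(e)/\mu^j_s(e) > 0$ --- is the delicate point: the positive limit ratio at $e_0$ alone does not control the infimum across all other pairs $(s, e)$. I expect this will require a Perron--Frobenius-style positivity argument exploiting the reducedness of the unfolding sequence, so that iterated application of the transition matrices $M^{r, s}$ propagates the positive lower bound at $e_0$ to a uniform positive lower bound over all edges (possibly after a further subsequence refinement). Carrying out this propagation of positivity is the main technical challenge.
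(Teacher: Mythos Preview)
Your conservation law and subsequence extraction are correct and correspond exactly to the paper's observation that the induced affine maps $S_{r,s}:\Delta\to\Delta$ are area-preserving on the positive cone. The divergence is at the disjointness step, and the gap you flag is real: I do not see how to close it along the Perron--Frobenius line you sketch. A positive limiting ratio $a^i_{e_0}/a^j_{e_0}$ at one edge controls $\mu^i_s(e_0)/\mu^j_s(e_0)$ along the chosen subsequence, but reducedness only says the forward transition matrices $M^{s,0}$ eventually have all entries large; it gives no mechanism to push a single-edge ratio bound \emph{backwards} to a uniform lower bound $\inf_{s,e}\mu^i_s(e)/\mu^j_s(e)>0$. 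Nothing you have written rules out $\mu^j_s$ being much larger than $\mu^i_s$ on some other edge $e'\in H^j\setminus H^i$, in which case your infimum is $0$ and $\mu^i-c\mu^j$ fails to be a current for every $c>0$. The inequality $\mu^i\ge c\mu^j$ is genuinely stronger than overlap at a single edge, and ergodicity by itself does not bridge the two.

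The paper avoids this difficulty by a structural argument rather than a ratio estimate. It takes a \emph{double} limit of the normalized transition maps: first $S_s=\lim_{r\to-\infty}S_{r,s}$, then $S=\lim_{s\to-\infty}S_s$ (Arzel\`a--Ascoli, after passing to a subsequence). The compatibility $S_{s,t}\circ S_{r,s}=S_{r,t}$ survives both limits and yields $S^2=S$, so $S$ is an affine retraction of the simplex $\Delta$ onto its image. Elementary convexity then shows $\mathrm{Im}(S)=\Delta\cap H$ for an affine subspace $H$; its vertices $q_1,\ldots,q_k$ are exactly the ergodic currents, each lying in the relative interior of a unique minimal face $\Delta^i$ of $\Delta$. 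Because $S$ fixes $q_i$ and is affine, it must collapse all of $\Delta^i$ to $q_i$; hence if $\Delta^i$ and $\Delta^j$ shared a vertex we would get $q_i=q_j$. This forces the supporting faces, and therefore the edge sets $H^i$, to be pairwise disjoint. No comparison of the form $\mu^i\ge c\mu^j$ is ever needed: disjointness drops out of the retraction geometry.
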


We say an edge $e \in H^i$ in the transverse decomposition of $G_0$ for a folding (or unfolding) sequence with $i>0$ \textbf{supports} the ergodic length measure $\ell^i$ (or the ergodic probability current $\mu^i$, respectively.) This term will be used in \Cref{sec:upperbound}.

We start by giving a proof for the unfolding sequence \Cref{thm:NPRdecomp-unfolding}, and
afterwards indicate the small changes needed in the proof for the folding sequence \Cref{thm:NPRdecomp-folding}.

The setup is as follows.
Suppose we are given an unfolding sequence, $(G_s)_{s \le 0}$, where each $G_i \cong G$ and
length functions are given by pulling back the simplicial length function
$\ell_0$ on $G_0$.
Let $V$ be a vector space with basis $\{e^1,\ldots,e^n\}=EG$.
To each $G_s$ we assign the vector space $V_s \cong V$, but we put a norm on $V_s$
with basis $\{e_s^1,\ldots,e_s^n\}=EG_s$, such that $\| e^i_s \|_{V_s}:= \ell_{s}(e^i_s)$ and extend it to an $\ell^1$-norm on
$V_s$.

A nonnegative linear combination $a_1e^{1}_s+\ldots + a_ne^{n}_s$ with $a_i \ge 0$
can be interpreted as an assignment of thicknesses to edges, which can
then be viewed as rectangles, and then
$a_1\|e^{1}_s\|_{V_s}+\ldots+a_n\|e^{n}_s\|_{V_s}$ is the total area of the graph $G_s$. In other words,
an element $\mu$ in the nonnegative cone $V_s^{\ge 0}$ of $V_s$ is an assignment of thicknesses
on $EG_s$,
and $\|\mu\|_{\ell^1}$ is the area, used to define the $H^i$'s in the transverse decomposition.

Transition maps on the unfolding sequence $(G_s)_{s\le 0}$ induce linear maps
\[
  \ldots \longrightarrow V_{-2} \longrightarrow V_{-1} \longrightarrow V_0
\]
that are norm nonincreasing, and on the positive cones they are
norm (and area) preserving.
Hence, we can restrict to the simplex $\Delta_s:=\{\mu \in V_s^{\ge 0}\ |\ \|\mu\|_{\ell^1}=1\} \subset V_s$ to obtain
affine maps
\[
  \ldots \longrightarrow \Delta_{-2} \longrightarrow \Delta_{-1} \longrightarrow \Delta_0.
\]
Denote these affine maps and their compositions by $S_{r,s}:\Delta_r \to \Delta_s$
for $r < s \le 0$,
so that $S_{s,t}\circ S_{r,s} = S_{r,t}$ for $r < s < t \le 0$. Since all $V_s$'s are isomorphic, it
follows that all $\Delta_s$'s are isomorphic. Identify all $V_s$'s with $V$ and $\Delta_s$ with a
fixed $\Delta \subset V$. Note that each $S_{r,s}$ is 1-Lipschitz. Hence by Arz{\'e}la--Ascoli, after taking a
subsequence, we have the limits
\begin{align*}
  S_s &:= \lim_{r \to -\infty} S_{r,s}:\Delta \to \Delta,\\
  S &:= \lim_{s \to -\infty} S_{s}: \Delta \to \Delta.
\end{align*}
\begin{lemma}\label{lem:S_Retraction}
  The limit $S:\Delta \to \Delta$ is a retraction. That is, $S^2=S$.
\end{lemma}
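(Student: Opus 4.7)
The plan is to exploit the cocycle identity $S_{s,t}\circ S_{r,s}=S_{r,t}$ for $r<s<t\leq 0$, together with the fact that each $S_{r,s}$ is $1$-Lipschitz. All maps are $1$-Lipschitz on the compact simplex $\Delta$, so by passing to further subsequences in the Arzelà--Ascoli argument already invoked in the setup, we may assume the relevant convergences are uniform, not merely pointwise. I will chase two limits in succession: first $r\to-\infty$ with $s,t$ fixed, and then $s\to-\infty$ with $t$ fixed, and finally $t\to-\infty$.

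First I would fix $s<t$ and let $r\to-\infty$ in the cocycle identity. Since $S_{r,s}\to S_s$ uniformly on $\Delta$ and $S_{s,t}$ is continuous (even $1$-Lipschitz), the identity passes to the limit to give
\[
S_{s,t}\circ S_s \;=\; S_t.
\]
Next I would fix $t$ and let $s\to-\infty$. On the one hand, by the definition of $S_t$ as the limit of $S_{r,t}$ over the first index, we have $S_{s,t}\to S_t$ uniformly on $\Delta$. On the other hand, $S_s\to S$ uniformly. Evaluating at $x\in\Delta$, the triangle inequality combined with $1$-Lipschitz continuity of $S_{s,t}$ gives
\[
\bigl|S_{s,t}(S_s(x))-S_t(S(x))\bigr|
\leq \bigl|S_{s,t}(S_s(x))-S_{s,t}(S(x))\bigr|+\bigl|S_{s,t}(S(x))-S_t(S(x))\bigr|
\leq \|S_s(x)-S(x)\|+\|S_{s,t}(S(x))-S_t(S(x))\|,
\]
and both summands tend to $0$. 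Since the left-hand side $S_{s,t}(S_s(x))=S_t(x)$ is independent of $s$, passing to the limit yields
\[
S_t \;=\; S_t\circ S.
\]
Finally, letting $t\to-\infty$ and using $S_t\to S$ uniformly gives $S=S\circ S$, which is the desired retraction identity $S^2=S$.

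The only subtlety is ensuring the two limits commute in the middle step, but this is precisely where the $1$-Lipschitz property of $S_{s,t}$ is used: it prevents the error from moving the basepoint $S_s(x)$ to $S(x)$ from blowing up, so that the pointwise convergence of $S_{s,t}(\,\cdot\,)$ to $S_t(\,\cdot\,)$ at the single point $S(x)$ is enough. No finer information about the dynamics of the sequence is needed beyond the cocycle identity and uniform equicontinuity guaranteed by $1$-Lipschitz maps on a compact set.
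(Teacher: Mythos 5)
Your proof is correct and follows exactly the paper's route: take the cocycle identity $S_{s,t}\circ S_{r,s}=S_{r,t}$, pass to the limit $r\to-\infty$, then $s\to-\infty$, then $t\to-\infty$. You simply supply the (correct) $1$-Lipschitz/equicontinuity justification for each passage to the limit that the paper leaves implicit.
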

\begin{proof}
  From the compatibility condition $S_{s,t}\circ S_{r,s} = S_{r,t}$, we pass to the
  limit $r \to -\infty$ to get
  \[
    S_{s,t} \circ S_{s} = S_{t}.
  \]
  Now take the limit $s \to -\infty$ to get
  \[
    S_t \circ S = S_t.
  \]
  Finally, taking $t \to -\infty$ gives the desired identity $S^2 = S$. 
\end{proof}

\begin{corollary}\label{cor:simplex_decomposition}
  The image of $S$ is a simplex. Moreover, $\Delta$ can be written as a join of
  disjoint faces $\Delta^0, \ldots, \Delta^k$ of $\Delta$:
  \[
    \Delta = \Delta^0 \ast \Delta^1 \ast \ldots \ast \Delta^k,
  \]
  such that the image of $S$ is the convex hull of a finite set that consists of
  exactly one point $q_i \in \mathrm{int}(\Delta^i)$ for $i=1,\ldots,k$.
\end{corollary}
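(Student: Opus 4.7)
The plan is to exploit the retraction identity $S^2=S$ together with the extremality of the vertices of $\mathrm{Im}(S)$. Let $v^1,\ldots,v^n$ denote the vertices of $\Delta$. Since $S$ is affine, $P:=S(\Delta)$ equals the convex hull of the finite set $\{S(v^1),\ldots,S(v^n)\}$, hence $P$ is a convex polytope. Let $q_1,\ldots,q_k$ denote its extreme points; each equals some $S(v^j)$.

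The key step, which I expect to be the main work in the proof, will be to show that whenever a vertex $v^j$ appears with positive coefficient in the barycentric expansion $q_i=\sum_j b^i_j v^j$ of an extreme point $q_i$ of $P$, one has $S(v^j)=q_i$. Since $q_i\in P=\mathrm{Fix}(S)$, we have $S(q_i)=q_i$, and by affineness $q_i=\sum_j b^i_j S(v^j)$. Writing each $S(v^j)=\sum_l a^j_l q_l$ as a convex combination of the extreme points of $P$ and substituting yields
\[
q_i \;=\; \sum_l\Bigl(\sum_j b^i_j a^j_l\Bigr)q_l,
\]
a convex combination of points in $P$. Since $q_i$ is extreme in $P$, this combination must be the trivial one: $\sum_j b^i_j a^j_l=0$ for every $l\neq i$. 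In particular, $b^i_j>0$ forces $a^j_l=0$ for all $l\neq i$, i.e.\ $S(v^j)=q_i$, as claimed.

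As an immediate consequence, the barycentric supports $\supp(q_i)$ and $\supp(q_{i'})$ are disjoint for $i\neq i'$: a common vertex $v^j$ would satisfy $S(v^j)=q_i=q_{i'}$. I can then partition the vertex set of $\Delta$ by
\[
V^i := \supp(q_i)\ \ (i=1,\ldots,k),\qquad V^0:=\{v^j:v^j\notin V^i\text{ for any }i\geq 1\},
\]
and set $\Delta^i:=\mathrm{conv}(V^i)$. By construction the $\Delta^i$ are pairwise vertex-disjoint faces of $\Delta$ with $\Delta=\Delta^0\ast\Delta^1\ast\cdots\ast\Delta^k$, and $q_i\in\Delta^i$ for each $i\geq 1$.

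To close the argument I will need that $P$ is a simplex with vertices $q_1,\ldots,q_k$, i.e.\ that these points are affinely independent. But the disjointness of the $V^i$ and the linear independence of $v^1,\ldots,v^n$ in the ambient vector space $V$ imply that any linear relation $\sum_i\lambda_i q_i=0$ becomes a linear relation $\sum_j \lambda_{i(j)} b^{i(j)}_j v^j=0$ (where $i(j)$ is the unique $i\geq 1$ with $v^j\in V^i$, when it exists), forcing every $\lambda_i=0$. Beyond the extremality trick in the key step, no further input is required; the rest is bookkeeping on barycentric coordinates.
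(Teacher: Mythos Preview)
Your proof is correct and follows essentially the same line as the paper's. Both hinge on the extremality argument: writing $q_i=S(q_i)=\sum_j b^i_j\,S(v^j)$ as a convex combination of points of $P=\mathrm{Im}(S)$ and using that $q_i$ is extreme in $P$ to force $S(v^j)=q_i$ whenever $b^i_j>0$. The paper precedes this with a geometric observation you omit (namely that $\mathrm{Im}(S)=H\cap\Delta$ for the affine span $H$, via ``lines through two fixed points are fixed''), while you add an explicit check of affine independence of the $q_i$ from the disjointness of their supports; neither difference is substantive.
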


\begin{proof}
  Since $S^2=S$, the points in the image of $S$ are fixed. Now take a segment
  $[x,y]\subset \Delta$ with $x \neq y \in \textrm{Im}(S)$. Then since $V$
  is a vector space, there is a unique line $l\subset V$
  containing $[x,y]$. As $S$ is an affine map, it then follows that $S$ fixes $l
  \cap \Delta$. Hence, $l \cap \Delta \subset \textrm{Im}(S)$. 
  Therefore, we can
  consider the smallest affine plane $H$ in $V$ that contains $\textrm{Im}(S)$.
  This gives $\textrm{Im}(S)=H \cap \Delta$.

  Since $\textrm{Im}(S)$ is an affine image of a simplex, it is the convex hull of finitely many extremal points. 
  Let $q_1,\ldots,q_k$ be the vertices of
    $\textrm{Im}(S) \subset \Delta$. Let $\Delta^i$ be the unique
  smallest face of $\Delta$ that contains $q_i$. By the previous
  paragraph, the intersection $\Delta^i\cap \textrm{Im}(S)$ is precisely $\{q_i\}$.
  From the fact that $q_i$ is fixed and that $S$ is affine, we can see that $S$ maps
  $\Delta^i$ to $q_i$. Indeed, if we write $q_i$ as the
  convex combination of the vertices $v_k$ of $\Delta^i$, $q_i=\sum a_kv_k$, then $a_k>0$ and
  $q_i=S(q_i)=\sum a_k S(v_k)$. This shows that $S(v_k)$ is a convex
  combination of vertices in $\Delta^i$, so we must have that $S(v_k)=q_i$ for each vertex $v_k$ of $\Delta^i$. 

  In particular, this implies that $\Delta^i$ and $\Delta^j$ for $i \neq j$ do not have
    common vertices. Now let $\Delta^0$ be the span of all vertices of $\Delta$ that are
    \emph{not} in any of $\Delta^1,\ldots,\Delta^k$. This finishes the
    proof.
    \end{proof}  
    We note that the vertices $q_i$ in the above proof represent ergodic measures.

It remains to relate this linear algebra with the statement of the Transverse
Decomposition theorem for an unfolding sequence.

\begin{proof}[Proof of \Cref{thm:NPRdecomp-unfolding}]
  Let $C_s \subset \Delta$ denote the image of $S_s$. The maps $S_{r,s}$
  restrict to surjective affine maps $C_r\to C_s$, so
  $\dim C_r \ge \dim C_s$ for $r
  \le s$. Hence, $\dim C_r\leq \dim \Delta$ stabilizes for sufficiently large $-r$.

  However, if $C_s \to C_{s+1}$ is affine, onto, and $\dim C_s = \dim C_{s+1}$,
  then the map $S_{s, s+1}$ is also one to one. Hence, eventually all $C_s$ are affinely
  isomorphic. By taking limits, we also get that $\textrm{Im}(S)$ is naturally
  affinely isomorphic to each $C_s$ for sufficiently large $-s$. Indeed, we have
  that $S_t\circ S = S_t$, so $S_t: \textrm{Im}(S) \to \textrm{Im}(S_{t})$ is
  surjective, implying for large $-t$, this map is an isomorphism.

  For $i=0,\ldots,k$ we will define $H^i$ to be the subgraph whose edges
  consist of the vertices of $\Delta^i$. See \Cref{fig:decomposition}.
  Since the image
of $S$ is contained in $\Delta^1*...*\Delta^k$ every column of $S$ has zero
$e$-coordinate for $e\in \Delta^0$. Let $e\in \Delta^0$. Since the
  $e$-coordinates of $S_s$ have the form $\ell(e_s)\mu^j(e_s)$ and
  $\lim S_s=S$, we deduce
  $\lim_{s \to -\infty}\ell(e_s)\mu^j(e_s)=0$ for each $j$, justifying
  $e \in H^0$. Similarly, $H^i$ for $i \neq 0$ is simply the set of
  vertices of $\Delta^i$. This means for $e \in H^i$, the $e$-coordinate of
  $S(e)$ is nonzero, and this similarly implies $\lim_{s \to
    -\infty}\ell(e_s)\mu^i(e_s)\neq 0$. Likewise, if $j>0$ and $j\neq
  i$, and if $e'\in H^j$, then the $e$-coordinate of $S(e')$ is 0, so
  $\lim_{s \to -\infty}\ell(e_S)\mu^j(e_s) = 0$.   This
  concludes the proof. 
  \begin{figure}[ht!]
    \centering
    \includegraphics[width=\textwidth]{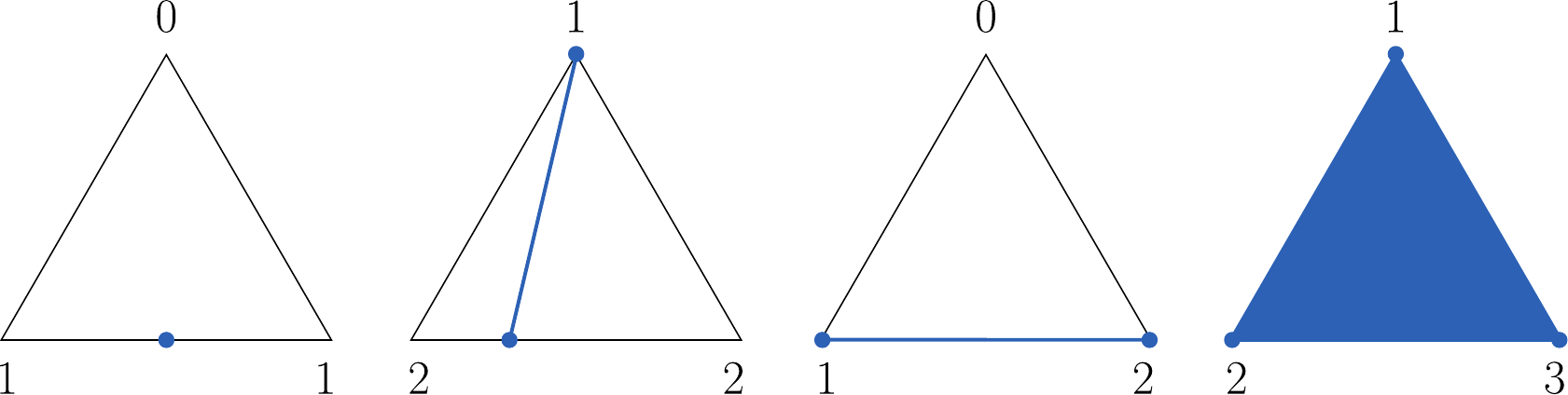}
    \caption{The image of $S$ (in blue) and how the vertices of $\Delta$, which
      correspond to the edges of $G$, are mapped via $S$. The
      vertices labeled by $i$ correspond to the edges of the component $H^i$ in the transverse
      decomposition. The map $S$ is the retraction that maps $\Delta$ onto the
      simplex $\textrm{Im}(S)$.}
    \label{fig:decomposition}
  \end{figure}
\end{proof}

\begin{proof}[Proof of \Cref{thm:NPRdecomp-folding}]
This time, we give a norm on the basis $EG$ for the vector space $V_s$
associated with $G_s$ using the frequency current, and interpret a nonnegative
linear combination in $V_s$ as assigning lengths to edges by their coefficients.
We then run the same argument by using that the \emph{transpose} of the transition
matrices induce the linear inverse system $\ldots \to V_2 \to V_1 \to V_0$, which
further reduces to an affine inverse system $\ldots \to \Delta_2 \to \Delta_1
\to \Delta_0$.
(Recall the compatibility condition for the length measures on the folding
sequence in \Cref{ssec:sequences} uses the \emph{transpose}; length measures
pull back.)
For $s > r$, let
$S_{s,r}:\Delta_s \to \Delta_r$ and identify $\Delta_r \cong \Delta$ for all $r
\ge 0$. Then define $S_{r}$ and $S$ as before; $S_r:=\lim_{s \to \infty}S_{s,r}$
and $S:=\lim_{r \to \infty}S_r$, after taking subsequences. Then $S$ is a
retraction. Since the folding sequence is reduced, the inverse
limit of the image $S_r$ exists, and it is exactly the space of length measures
on the folding sequence. We now follow the same proof of
\Cref{cor:simplex_decomposition} to establish the desired decomposition.
\end{proof}

For the proof of the upper bound given later, we mark here an observation that appeared in
the proofs of \Cref{thm:NPRdecomp-folding,thm:NPRdecomp-unfolding}.
For folding/unfolding sequences, for $r<s$, the transition map $G_r \to G_s$
induces a linear map between the associated vector spaces $V_r$ and $V_s$. For an
unfolding sequence, it is $V_r \to V_s$ and for a folding sequence, it is $V_s \to
V_r$. With normalized bases, denote by $\wt{M}^{r,s}$ the transition matrix
associated to the linear map induced from $G_r \to G_s$.
Denoting by $M^{r,s}$
the transition matrix for the transition map $G_r \to G_s$, the two
transition matrices are related as follows.
\begin{lemma}\label{lem:transition_relation}
  For the unfolding sequence, we have
  \begin{align*}
    \wt{M}^{r,s} &= \diag\left(\ell_s(e^1),\ \cdots, \ \ell_s(e^n)\right) M^{r,s} \diag\left(\frac{1}{\ell_r(e^1)},\ \cdots, \ \frac{1}{\ell_r(e^n)}\right).
  \end{align*}
  For the folding sequence,
  \begin{align*}
    \wt{M}^{r,s} &= \diag\left(\mu_r(e^1),\ \cdots, \ \mu_r(e^n)\right) (M^{r,s})^T \diag\left(\frac{1}{\mu_s(e^1)},\ \cdots, \ \frac{1}{\mu_s(e^n)}\right).
  \end{align*}
\end{lemma}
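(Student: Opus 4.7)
The plan is to perform a direct change-of-basis computation. The only substantive content is to identify, in each case, (i) the direction of the induced linear map, (ii) whether its matrix in the unnormalized basis is $M^{r,s}$ or $(M^{r,s})^T$, and (iii) the scaling needed to pass to the normalized basis.

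For the unfolding case, recall that currents push forward: $\vec{\mu}_s = M^{r,s}\vec{\mu}_r$. Interpreting coordinates in $V_s$ with respect to the basis $\{e_s^i\}$ as thicknesses on edges, this identifies the linear map $V_r\to V_s$ induced by $\phi_{r,s}$ as being represented by the matrix $M^{r,s}$. The ``normalized'' basis is obtained by replacing each $e_s^i$ with $e_s^i/\|e_s^i\|_{V_s} = e_s^i/\ell_s(e^i)$, so that basis vectors become unit-norm and coordinates on $\Delta_s$ become barycentric coordinates on the standard simplex. The standard change-of-basis formula $\wt{A}=D_W\, A\, D_V^{-1}$ for rescaling bases by positive diagonals $D_V, D_W$, applied with $A=M^{r,s}$, $D_V=\diag(\ell_r(e^i))$, $D_W=\diag(\ell_s(e^i))$, then produces the first displayed identity.

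For the folding case, lengths pull back: $\vec{\ell}_r=(M^{r,s})^T\vec{\ell}_s$. Now the coordinates in $V_s$ are interpreted as lengths on edges, and the relevant induced map goes the other way, $V_s\to V_r$, with matrix $(M^{r,s})^T$ in the basis $\{e_s^i\}$. The weights defining the $\ell^1$-norm are the frequencies $\mu_s(e^i)$, so the normalized basis here is $\{e_s^i/\mu_s(e^i)\}$. Applying the same change-of-basis formula, now with domain $V_s$ and codomain $V_r$, and with $A=(M^{r,s})^T$, $D_V=\diag(\mu_s(e^i))$, $D_W=\diag(\mu_r(e^i))$, yields the second displayed identity.

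The only obstacle is careful bookkeeping: one must keep straight which matrix transports which quantity, place the diagonal factors on the correct sides, and remember the swap in the direction of the map between the folding and unfolding cases. There is no deeper analytic content; once the setup is aligned the identities follow by inspection of entries.
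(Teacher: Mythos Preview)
Your argument is correct and is precisely the change-of-basis computation the paper has in mind; the paper's own proof is a single sentence (``the bases for $V_s$ are normalized by the simplicial length for the unfolding sequence, and by the frequency current for the folding sequence''), and you have simply spelled out the bookkeeping that sentence is pointing at. There is no gap and no meaningful difference in approach.
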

\begin{proof}
  This is because the bases for $V_s$ are normalized by the
  simplicial length for the unfolding sequence, and by the frequency current for
  the folding sequence.
\end{proof}
Now, the transverse decomposition results tell that up to relabeling the edges,
$\wt{M}^{r,s}$ is almost block diagonal with respect to the transverse decomposition.

\begin{corollary}\label{cor:normalized_diagonal}
  Relabel the edges $e^1,\ldots,e^n$ of $G$ so that for the transverse decomposition
  $EG=H^0\sqcup H^1 \sqcup \ldots \sqcup H^k$, each $H^i$ consists of edges labeled by
  consecutive integers. Then the transition matrix corresponding to $S$ from
  \Cref{lem:S_Retraction} equals
  \[
    \wt{M}:=\lim_{s \to -\infty}\lim_{\substack{r<s\\ r\to -\infty}}\wt{M}^{r,s}
  \]
  and it has a block diagonal submatrix with positive matrices corresponding to the
  edges in $G \setminus H^0$. A similar statement holds for folding sequences.
  \end{corollary}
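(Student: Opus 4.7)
The plan is to read the block structure of $\wt{M}$ directly from the join decomposition $\Delta = \Delta^0 * \Delta^1 * \ldots * \Delta^k$ produced in \Cref{cor:simplex_decomposition}, combined with \Cref{lem:S_Retraction}. Since $\wt{M}$ is the matrix of the retraction $S:\Delta \to \Delta$ in the basis $\{e^1,\ldots,e^n\}$, its $j$-th column records the coordinates of $S(e^j)$ in that basis. After relabeling so that the edges of each $H^i$ occupy a contiguous block of indices, it will suffice to compute $S(e^j)$ for each basis vector and identify where its nonzero coordinates lie.

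First, I would treat columns indexed by $H^i$ with $i \ge 1$. By construction in the proofs of \Cref{thm:NPRdecomp-unfolding} and \Cref{cor:simplex_decomposition}, the edges of $H^i$ are exactly the vertices of $\Delta^i$, and $S$ collapses every vertex of $\Delta^i$ to the single point $q_i$. Crucially, $\Delta^i$ was chosen as the \emph{smallest} face of $\Delta$ containing $q_i$, so writing $q_i$ as a convex combination of vertices of $\Delta^i$ forces all coefficients to be strictly positive. Thus each such column of $\wt{M}$ is supported exactly on rows indexed by $H^i$, with all strictly positive entries there. Next, for columns indexed by $H^0$: since $\textrm{Im}(S) \subseteq \Delta^1 * \ldots * \Delta^k$, the rows indexed by $H^0$ also vanish on these columns. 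Assembling these observations, the entire $H^0$-row band of $\wt{M}$ is zero, and the submatrix on rows and columns indexed by $H^1 \sqcup \ldots \sqcup H^k$ is block diagonal with blocks $A_1,\ldots,A_k$, each a strictly positive matrix.

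For the folding sequence version, I would simply dualize: the frequency current supplies the normalization on each $V_s$, the transposed transition matrices yield an affine inverse system $\ldots \to \Delta_2 \to \Delta_1 \to \Delta_0$ as in the proof of \Cref{thm:NPRdecomp-folding}, and one obtains an analogous retraction with a join decomposition of $\Delta$. The same column-by-column analysis then produces the identical block structure.

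The only point that does any real work here is the positivity of each block $A_i$, and that reduces to the observation that $q_i$ lies in the \emph{relative interior} of $\Delta^i$; everything else is bookkeeping of indices, so I do not anticipate a substantive obstacle.
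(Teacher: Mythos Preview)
Your proposal is correct and follows essentially the same approach as the paper: both arguments read the block structure of $\wt{M}$ directly from the join decomposition $\Delta = \Delta^0 * \Delta^1 * \ldots * \Delta^k$ of \Cref{cor:simplex_decomposition}, using that $S$ sends every vertex of $\Delta^i$ to $q_i$ and that $q_i$ lies in the relative interior of $\Delta^i$ (so its barycentric coordinates are all positive). Your write-up is somewhat more explicit in spelling out the column-by-column analysis and the vanishing of the $H^0$-row band, but the substance is identical.
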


  \begin{proof}
    This follows from the fact that $S$ is a retraction and $\textrm{Im}(S)$ is a simplex, whose
    vertices are linear combinations of a disjoint set of vertices of $\Delta$.
    Each face $\Delta^i$ of $\Delta$
    in \Cref{cor:simplex_decomposition} forms a diagonal block for
    $\wt{M}$. Since each vertex corresponding to $H^i$ with $i>0$ is a positive
    linear combination of vertices in $H^i$, the diagonal blocks for $H^1,\ldots,H^k$ are positive matrices.
    \end{proof}

\section{Lower Bound $2n-6$:  Folding sequence}  \label{sec:lowerbound}
In this section, we give a lower bound for the maximum number of linearly independent ergodic measures supported on the limiting tree of a given folding sequence of graphs of rank $n$.

\subsection{Diagonal-dominant matrices for folding sequences}
\label{ssec:ddmatrix_folding}

Let $n$ be a positive integer. Here we consider a sequence of $n \times n$
matrices with diagonal entries being dominant over other off-diagonal entries,
such that any consecutive product remains diagonal-dominant. 

\begin{definition} \label{def:klp_folding}
  Let $\eps>0$, and let $m$ be a positive integer such that $m \le n$. An $n \times n$ nonnegative matrix $A =
  (a_{ij})$ with positive diagonal entries is called an
  \textbf{$(m,\eps)$-matrix} if the following holds:
  \[
    \frac{a_{ij}}{a_{jj}} < \eps, \qquad \text{for $j \le m$ and $i \neq j$.}
  \]
\end{definition}
In \Cref{sec:lowerbound}, we
will primarily be considering $(m, \eps)$-matrices with small $\eps$'s
converging to zero, and will
refer to these matrices as \emph{diagonal-dominant} matrices.
We will refer to the first $m$ columns of $A$ as \textbf{Tier 1} columns and will call the remaining columns \textbf{Tier 2} columns.
Then for an $n \times n$ matrix $A=(a_{ij})$, denote by $K_A$ the upper
bound of the ratios of any entry to any diagonal entry in a Tier 1 column. Namely, 
\[
  K_A := \max_{\substack{i,j\le n\\k \le m}} \frac{a_{ij}}{a_{kk}}.
\]

 Given a diagonal entry in a Tier 1 column, the ratio of the other entries of $A$ by this entry is
bounded according to the following schematic in 
\Cref{fig:ratios_folding}.
\begin{figure}[ht!]
  \centering
  \includegraphics[width=.3\textwidth]{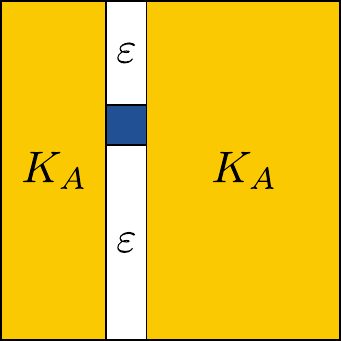}
  \caption{The blue cell represents a given diagonal entry in a Tier 1 column.
    Then each value in the other cells shows an upper bound of the ratio of the entry in
  that cell to the given diagonal entry.}
  \label{fig:ratios_folding}
\end{figure}

\begin{lemma}
  \label{lem:matrixproduct_folding}
  Let $n \ge 1$, and fix $1 \le m \le n$. Suppose $A$ is an $n \times n$, $(m,\eps_A)$-matrix, and let $K_A$ be as defined
  above. If $A'$ is an $n \times n$, $(m, \eps_{A'})$-matrix, then there exists $\eps_{AA'}>0$ such that $AA'$ is an
  $(m,\eps_{AA'})$-matrix, with
  \[
    \eps_{AA'} = \eps_A + n K_A \eps_{A'}.
  \]
\end{lemma}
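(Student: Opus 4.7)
The plan is to directly compute the ratios $b_{ij}/b_{jj}$ of the entries of the product matrix $B := AA'$ for $j \le m$ and $i \neq j$, and to bound them using the hypotheses in a single pass. The key observation is that since $j \le m$, the diagonal entry $a'_{jj}$ dominates the $j$-th column of $A'$, and the diagonal entry $a_{jj}$ sits in a Tier 1 column of $A$ (so it can serve as the ``large'' reference against which $K_A$ is defined).

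First, I would write $b_{ij} = \sum_{k} a_{ik} a'_{kj}$ and, using nonnegativity of all entries of $A, A'$, bound the denominator simply by $b_{jj} \ge a_{jj}\, a'_{jj}$. This is the cheap but crucial step: it avoids having to track the whole sum in the denominator. Then I would split the numerator into the $k = j$ term plus the rest:
\[
b_{ij} \;=\; a_{ij}\, a'_{jj} \;+\; \sum_{k \neq j} a_{ik}\, a'_{kj}.
\]

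Dividing by $a_{jj} a'_{jj}$, the first piece contributes $a_{ij}/a_{jj} < \eps_A$ directly from the hypothesis on $A$ (using $j \le m$, $i \neq j$). For the second piece I would write
\[
\frac{1}{a_{jj} a'_{jj}} \sum_{k \neq j} a_{ik}\, a'_{kj} \;=\; \sum_{k \neq j} \frac{a_{ik}}{a_{jj}} \cdot \frac{a'_{kj}}{a'_{jj}},
\]
and bound each factor separately: since $j \le m$, the definition of $K_A$ gives $a_{ik}/a_{jj} \le K_A$ for every $i, k$; and since $j \le m$ and $k \neq j$, the $(m,\eps_{A'})$-hypothesis on $A'$ gives $a'_{kj}/a'_{jj} < \eps_{A'}$. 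Summing over at most $n-1 < n$ indices yields the bound $n K_A \eps_{A'}$. Combining the two pieces gives
\[
\frac{b_{ij}}{b_{jj}} \;\le\; \frac{b_{ij}}{a_{jj} a'_{jj}} \;<\; \eps_A + n K_A \eps_{A'},
\]
which is exactly the required bound, so we may take $\eps_{AA'} = \eps_A + n K_A \eps_{A'}$.

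I do not expect any serious obstacle here: the estimate is a one-line Hölder-type split once one observes that replacing $b_{jj}$ by its single dominant summand $a_{jj}a'_{jj}$ is harmless. The only subtlety is to make sure the constant $K_A$ is applied legitimately, which requires checking that the column index $j$ of the denominator $a_{jj}$ is indeed in the Tier 1 range $\{1, \dots, m\}$ — and this is built into the hypothesis $j \le m$ we need to verify the conclusion for.
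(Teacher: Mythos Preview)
Your proof is correct and follows essentially the same approach as the paper: bound $(AA')_{jj}$ below by the single term $a_{jj}a'_{jj}$, split the numerator into the $k=j$ term (controlled by $\eps_A$) and the $k\neq j$ terms (each controlled by $K_A\eps_{A'}$), and sum. The only minor omission is that you do not explicitly verify that $AA'$ is nonnegative with positive diagonal entries, but this is immediate (and your bound $b_{jj}\ge a_{jj}a'_{jj}>0$ already gives the latter).
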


\begin{proof}
  Let $A$, $A'$, $K_A$, $\epsilon_A, \epsilon_{A'}$, and $m$ be as given in the statement. Then, $AA'$ is a nonnegative matrix and its diagonal entries are positive. Indeed, $(AA')_{ii}= \sum_{j=1}^n a_{ij}a'_{ji} \ge a_{ii}a'_{ii}>0$.  Thus, there exists some $\eps_{AA'}>0$ such that $AA'$ is an $(m, \eps_{AA'})$-matrix. 
  To bound $\eps_{AA'}$, note for $j \le m$ and $i \neq j$:
  \begin{align*}
    \frac{(AA')_{ij}}{(AA')_{jj}} \le \frac{\sum_{k}a_{ik}a'_{kj}}{a_{jj}a_{jj}'}
    &= \sum_{k=1}^n \frac{a_{ik}}{a_{jj}} \frac{a_{kj}'}{a_{jj}'}.
  \end{align*}
  We now consider bounds on the terms in this summand depending
  on the value of $k$.
  \begin{itemize}
  \item $\mathbf{(k=j)}$ Here the summand is equal to $\frac{a_{ij}}{a_{jj}}$, which is bounded above by $\eps_A$.
  \item $\mathbf{(k \neq j)}$ We have 
    $\frac{a_{ik}}{a_{jj}} \le K_A$ and $\frac{a_{kj}'}{a_{jj}'} < \eps_{A'}$, so
    the summand is bounded above by $K_A \eps_{A'}$.
  \end{itemize}
  All in all, 
  \[
    \frac{(AA')_{ij}}{(AA')_{jj}} \le \sum_{k} \frac{a_{ik}}{a_{jj}}
    \frac{a_{kj}'}{a_{jj}'} < \eps_A + (n-1)K_A\eps_{A'}\leq \eps_A + nK_A \eps_{A'},
  \]
  so we may take $\eps_{AA'} = \eps_A + nK_A\eps_{A'}$, concluding the proof.
\end{proof}

\subsection{The game: Constructing matrices with increasingly dominant
  diagonal entries}\label{ssec:thegame_folding}

In this section, we introduce a game between Alice and Bob which will involve
constructing $n\times n$, $(m, \epsilon)$-matrices with increasingly dominant
Tier 1 diagonal entries. Similar constructions were used classically,
see e.g. \cite{yoccoz2010interval}. The purpose of this game, which will henceforth be called the \emph{Alice-Bob game}, is to construct a folding sequence whose transition matrices will satisfy the conditions in \Cref{thm:klp_then_ergodic_folding}. 

Fix $n\geq 2$ and $1\leq m\le n$. The rules of the game are as follows.

First, Alice chooses any $\eps_0 > 0$, and Bob chooses an $n \times n$,
$(m,\eps_0)$-matrix $A_0$. Next, Alice chooses $\eps_1>0$, and
Bob chooses an $n \times n$, $(m,\eps_1)$-matrix $A_1$. Continuing
inductively, they construct a sequence of $n \times n$ matrices
$\{A_j\}_{j=0}^\infty$. For $r, s\in \mathbb{Z}_{\geq 0}$ with $r<s$, put
$A_{r,s} = A_rA_{r+1}\cdots A_s$. Also, say $A_{r,r}:= A_r$.
Alice wins the game if there exist $\{\ov{\eps}_r\}_{r \ge 0}$ such that 
\begin{winninglist}
\begin{enumerate}
\item For every $r \ge 0$ and
  for each $s>r$, $A_{r,s}$ is an $(m,\ov{\eps}_r)$-matrix, and
\item $\ov{\eps}_r \to 0$ as $r \to \infty$.
\end{enumerate}
\caption{Conditions on $\{\ov{\eps}_r\}_{r \ge 0}$ for Alice to win the Alice-Bob game.}
\label{list:Alice_win_folding}
\end{winninglist}
Otherwise, Bob wins the game.

\begin{theorem}\label{thm:TheGame_folding}
  Alice has a winning strategy.
\end{theorem}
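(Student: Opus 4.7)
The plan is for Alice to commit to an explicit rule that uses only information available to her, namely the constants $K_{A_0}, \ldots, K_{A_{r-1}}$ determined by Bob's earlier moves. Concretely, I would have her maintain the running product $P_r := \prod_{j=0}^{r-1} n K_{A_j}$ (with $P_0 := 1$) and declare
\[
\epsilon_r := \frac{1}{2^{r+1} P_r}.
\]
This is legal: after Bob plays $A_{r-1}$, Alice can compute $K_{A_{r-1}}$ before she must declare $\epsilon_r$. It is also strictly positive since each $K_{A_j} \ge 1$: the ratio $a_{ii}/a_{ii} = 1$ appears in the max defining $K_A$ for any $i \le m$, so $P_r \ge 1$ (in fact $P_r \ge n^r$).

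The main step is then to iterate \Cref{lem:matrixproduct_folding} and show, by induction on $s - r$ with $s$ fixed, that for every $r \le s$ the product $A_{r,s}$ is an $(m, \delta_{r,s})$-matrix with
\[
\delta_{r,s} \;\le\; \frac{1}{P_r}\sum_{k=r}^{s} 2^{-(k+1)} \;\le\; \frac{2^{-r}}{P_r}.
\]
The base case $s = r$ is tautological from Bob's rules. The inductive step decomposes $A_{r,s} = A_r \cdot A_{r+1,s}$, so \Cref{lem:matrixproduct_folding} gives $\delta_{r,s} \le \epsilon_r + n K_{A_r}\, \delta_{r+1,s}$, and the telescoping identity $n K_{A_r}/P_{r+1} = 1/P_r$ collapses the right-hand side into the asserted geometric sum. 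Setting $\bar{\epsilon}_r := 2^{-r}/P_r$ produces a bound on $\delta_{r,s}$ that is uniform in $s > r$, and since $P_r \ge 1$ we have $\bar{\epsilon}_r \le 2^{-r} \to 0$, so Alice wins.

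The only conceptual hurdle is the information asymmetry between the two players: Alice must announce $\epsilon_r$ \emph{before} Bob reveals $A_r$, so her formula cannot use $K_{A_r}$ itself, even though Bob's Tier 2 entries (which are unconstrained) can make $K_{A_r}$ arbitrarily large. The strategy above deals with this by absorbing the unknown $K_{A_r}$ into $P_{r+1}$, which is precisely the quantity Alice uses when she defines $\epsilon_{r+1}$ in the following round; the extra geometric factor $2^{-(r+1)}$ then buys the slack needed to overcome any amplification that Bob's blown-up off-diagonal entries may cause. Beyond this bookkeeping and a careful induction, I expect the argument to be routine.
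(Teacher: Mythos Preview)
Your argument is correct. Both proofs follow the same high-level plan---choose $\eps_r$ small enough, depending on Bob's previous moves, so that iterating \Cref{lem:matrixproduct_folding} yields a geometric series---but the bookkeeping differs in a genuine way. The paper peels from the \emph{right}, writing $A_{r,s+1}=A_{r,s}A_{s+1}$ and taking $P_s=\max\{K_{A_{r,p}}:0\le r\le p<s\}$, so Alice's choice of $\eps_s$ depends on the $K$-constants of all partial products. You peel from the \emph{left}, writing $A_{r,s}=A_rA_{r+1,s}$ and taking $P_r=\prod_{j<r}nK_{A_j}$, so Alice needs only the $K$-constants of the individual matrices $A_0,\dots,A_{r-1}$; the multiplicative identity $nK_{A_r}/P_{r+1}=1/P_r$ then makes the induction collapse cleanly. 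Your version is slightly more explicit (Alice's formula is closed-form rather than ``choose $\eps_s$ so that\ldots'') and lighter on data, while the paper's right-decomposition is the natural one to generalize to the more elaborate game of \Cref{thm:TheGame}, where the increments in $p_{r,s}$ and $\eps_{r,s}$ involve $K_{r,s}$ and $p_{r,s}$ of the running product.
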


\begin{proof}
    We will prove that Alice can win the game with $\ov\eps_r = 2^{-r}$ for each
    $r\geq 0$. For ease of exposition, we will denote $\eps_{A_{r, s}}$ by
    $\eps_{r, s}$ and $K_{A_{r,s}}$ by $K_{r, s}$. Note then $\eps_r =
    \eps_{r,r}$ and $K_{r,r}=K_{A_r}$. To that end, let Alice choose $\eps_0 =
    2^{-1}$, and for each subsequent $s > 0$, choose $\eps_s$ so
    that \[
      nP_s\eps_s < 2^{-(s+1)},
    \] where $P_s = \max\{K_{r,p} \mid 0 \le r \le p < s\}$. Note, in particular, that for each $\eps_s$, we
    have that $\eps_s \leq 2^{-(s+1)}$.

    We claim that for Alice's choices of $\{\eps_p\}_{p=0}^s$ as above, $A_{r,
      s}$ is an $(m, 2^{-r})$-matrix. To see this, note that by
    \Cref{lem:matrixproduct_folding}, we have for each $r \le p <s$:
    \[
      \eps_{r,p+1} = \eps_{r,p} + nK_{r,p}\eps_{p+1} < \eps_{r,p} + 2^{-(p+2)}.
    \]
    Hence, by telescoping, we see that for each $r \ge 0$ and $s>r$:
    \begin{align*}
        \eps_{r, s} &< \eps_{r,r} + 2^{-(r+2)} + \ldots + 2^{-s} + 2^{-(s+1)} \\
        &\le 2^{-(r+1)} +  2^{-(r+2)} + \ldots + 2^{-s} + 2^{-(s+1)} \\
        &< 2^{-r}.
    \end{align*}
    Thus, we have shown that for every $r\geq 0$ and for each $s > r$, $A_{r, s}$ is an $(m, \ov\eps_r)$ matrix, with $\ov\eps_r = 2^{-r}$, since $2^{-r} \to 0$ as $r\to \infty$, we have that Alice has a winning strategy; our choice of $\{\ov{\eps}_r\}_{r \ge 0}$ satisfies the conditions in \Cref{list:Alice_win_folding}.
\end{proof}

\subsection{Constructing ergodic length measures from diagonal-dominant matrices}

In this section, we first prove a sufficient condition for when the limiting $\mathbb{R}$-tree of a folding sequence admits at least $m$ linearly independent ergodic length measures.

\begin{theorem} \label{thm:klp_then_ergodic_folding}
Let $(G_s)_{s \ge 0}$ be a reduced folding sequence with limiting tree $T$, and let $\calM =
(M^{r,s})_{0 \le r \le s}$ be the direct system of $n \times n$ transition matrices associated with the folding sequence $(G_s)_{s \ge 0}$. Let $1
\le m \le n$.
For each $r \geq 0$, suppose there exists $\bar\epsilon_r > 0$ such that for every $s > r$, $(M^{r,s})^T$ is an $(m, \bar\epsilon_r)$-matrix and $\lim_{r\to \infty}\bar\epsilon_r = 0.$
Then the limiting
tree $T$ admits at least $m$ linearly independent ergodic length measures.
\end{theorem}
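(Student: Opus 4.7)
The strategy is to build $m$ linearly independent length measures on the folding sequence $(G_s)_{s\ge 0}$; by \Cref{thm:NPR-foldingsequence} these transfer to linearly independent length measures on $T$, and since $\bbP\cL(T)$ is a simplex (per the introduction) whose extreme rays are the projective classes of ergodic length measures, having $m$ linearly independent length measures forces at least $m$ linearly independent ergodic ones.

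To build these, I first choose $r_0$ large enough that $\bar\eps_{r_0} < 1/(m-1)$, which is possible because $\bar\eps_r \to 0$. For each $j \in \{1,\ldots,m\}$ and each $\tau > r_0$, I let $\vec v^{(\tau,j)}_r \in \R^{|EG_r|}$ be the $j$-th column of $(M^{r,\tau})^T$ for $r \le \tau$; the associativity $M^{r,\tau} = M^{s,\tau} M^{r,s}$ gives the pullback compatibility $\vec v^{(\tau,j)}_r = (M^{r,s})^T \vec v^{(\tau,j)}_s$ for $r \le s \le \tau$. Normalizing to $\vec w^{(\tau,j)}_r := \vec v^{(\tau,j)}_r / (M^{r_0,\tau})^T_{jj}$, the diagonal dominance of $(M^{r_0,\tau})^T$ as an $(m,\bar\eps_{r_0})$-matrix forces $(\vec w^{(\tau,j)}_{r_0})_j = 1$ and $(\vec w^{(\tau,j)}_{r_0})_i < \bar\eps_{r_0}$ for $i \ne j$---morally, the $j$-th column of $(M^{r_0,\tau})^T$ is nearly a multiple of the $j$-th coordinate vector.

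The critical step is to extract a subsequential limit. For each fixed $r$, I need to show that $\vec w^{(\tau,j)}_r$ is bounded uniformly in $\tau$. The case $r \le r_0$ is immediate via the fixed pullback $\vec w^{(\tau,j)}_r = (M^{r,r_0})^T \vec w^{(\tau,j)}_{r_0}$. For $r > r_0$, the factorization $(M^{r_0,\tau})^T = (M^{r_0,r})^T (M^{r,\tau})^T$ together with the hypothesis that $(M^{r,\tau})^T$ is itself an $(m,\bar\eps_r)$-matrix gives
\[
  (\vec w^{(\tau,j)}_r)_i \;=\; \frac{(M^{r,\tau})^T_{ij}}{(M^{r_0,\tau})^T_{jj}} \;\le\; \frac{(M^{r,\tau})^T_{ij}}{(M^{r_0,r})^T_{jj}\,(M^{r,\tau})^T_{jj}} \;\le\; \frac{\max(1,\bar\eps_r)}{(M^{r_0,r})^T_{jj}},
\]
independent of $\tau$. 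A Cantor diagonalization then produces a single subsequence $\tau_k \to \infty$ for which $\vec\ell^j_r := \lim_k \vec w^{(\tau_k,j)}_r$ exists for every $r \ge 0$, and continuity of the compatibility relations makes $\vec\ell^j$ a length measure inheriting the concentration bounds at $r_0$.

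Linear independence of $\vec\ell^1,\ldots,\vec\ell^m$ then follows from a standard diagonal-dominance argument at level $r_0$: in any vanishing combination $\sum c_j \vec\ell^j = 0$, taking the coefficient $c_k$ of maximum absolute value and reading the $k$-th coordinate at level $r_0$ yields $|c_k| \le (m-1)\bar\eps_{r_0}|c_k| < |c_k|$, forcing all $c_j = 0$. The main obstacle is the uniform-in-$\tau$ boundedness for $r > r_0$; this is where the hypothesis must be used in its full strength---diagonal dominance at \emph{every} starting level $r$, not merely at $r_0$---so that the $j$-th column of $(M^{r_0,\tau})^T$ cannot drift mass uncontrollably into other coordinates as $\tau \to \infty$.
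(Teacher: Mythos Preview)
Your proof is correct and follows the same core idea as the paper: for each $j\le m$ take the $j$-th column of $(M^{r_0,\tau})^T$, normalize by its diagonal entry so that the $(m,\bar\eps_{r_0})$-condition pins the vector near the $j$-th coordinate axis, extract a subsequential limit, and read off linear independence from diagonal dominance.

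The one genuine difference is in how the limit vectors are promoted to length measures on the whole sequence. The paper truncates so that $r_0=0$, identifies the space of length measures with the nested intersection $\mathcal{C}_0=\bigcap_{k}(M^{0,k})^T(\R^n_{\ge 0})$ at the base level, and observes (implicitly using that these cones are decreasing and closed) that the limit columns land in $\mathcal{C}_0$. You instead keep track of the normalized columns at \emph{every} level $r$, establish the uniform-in-$\tau$ bound at each $r$ via the factorization $(M^{r_0,\tau})^T=(M^{r_0,r})^T(M^{r,\tau})^T$ and the $(m,\bar\eps_r)$-hypothesis, and then Cantor-diagonalize. Your route is a bit longer but more self-contained: it produces the compatible sequence $(\vec\ell^j_r)_{r\ge 0}$ directly, without appealing to the inverse-limit identification the paper invokes. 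The paper's route is shorter but leans on that identification, which it states rather than proves. Either way the argument goes through.
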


Note that whenever Alice wins the Alice-Bob game described above (which \Cref{thm:TheGame_folding} guarantees will happen), the resultant folding sequence has transition matrices which satisfy the conditions of the above theorem. As such, we will frequently use the Alice-Bob game to construct a folding sequence whose transition matrices satisfy the conditions of the above theorem.

\begin{proof}
    Firstly, by \Cref{thm:NPR-foldingsequence} it suffices to show
    that the reduced folding sequence $(G_s)_{s \ge 0}$ admits at least $m$
    linearly independent measures. Indeed, $\calC((G_s)_{s \ge 0})$ is
    defined as the simplicial cone of non-negative vectors in the $n$-dimensional vector space whose vectors are sequences $(\vec{v}_s)_{s \ge 0}$ with $\vec{v}_s \in \R^n$  for every $s \ge 0$ satisfying the compatibility condition
    \[
        \vec{v}_{s} = M_{s+1}^T\vec{v}_{s+1},
    \]
    for every $s \ge 0$, where $M_{s+1}=M^{s,s+1}$. The
    projectivization is a simplex, and if we show $\dim \calC((G_s)_{s
      \ge 0})\geq m$ it will follow that the simplex has at least $m$
    vertices representing linearly independent ergodic measures.
    Denoting by $\calC(G_s)$ the space of length measures on $G_s$, the space $\mathcal{C}((G_s)_{s \ge 0})$ is isomorphic to the following space,
    \[
        \mathcal{C}_0 := \lim_{s \to \infty} \bigcap_{k=0}^s (M^{0,k})^T(\calC(G_k)).
    \]
    On the other hand, we know  $\calC(G_s) \cong \R_{\geq 0}^n$ for every $s \ge 0$, so it follows that
    \[
        \mathcal{C}_0 = \lim_{s \to \infty} \bigcap_{k = 0}^s \left(\text{the nonnegative cone of columns of $(M^{0,k})^T$}\right).
    \]

    Note that we may assume that $\bar\epsilon_0$ is as small as we
    like by truncating the sequence and reindexing. We choose this so
    that any $n\times m$ matrix of the form
    
    \[
    P:=
    \begin{bmatrix}
        1 & * & * & \ldots & * \\
        * & 1 & * & \ldots & * \\
        * & * & 1 & \ldots & * \\
        \vdots & \vdots & \vdots & \ddots & \vdots \\
        * & * & * & \cdots & 1 \\
        * & * & * & \cdots & * \\
        * & * & * & \cdots & * \\
    \end{bmatrix},
    \]
    where $*$ denotes an entry in the interval $[0,\bar\epsilon_0]$, will have $m$
    linearly independent columns.

    Now, we claim that $\mathcal{C}_0$ contains at least $m$ linearly independent
    vectors, by showing that the first $m$ columns of the matrices in the
    sequence $\{(M^{0,k})^T\}_{k=0}^{\infty}$ \emph{projectively}
    converge to columns as above (with scaling separate in each column).

    For each $1 \le i \le m$ and $k \ge 0$, consider the $i$-th column of
    $(M^{0,k})^T$ and divide it by the diagonal entry $(M^{0,k})^T_{ii}$. Denote
    by $C_i^k$ the resulting column vector. We will show that
    \[
        \left[ C_1^k\ C_2^k\ \cdots \ C_m^k \right] \quad \longrightarrow \quad P,
    \] 
    as $k \to \infty$, after passing to some subsequence.
    By construction, the $i$-th entry of $C_i^k$ is 1, for all $k \ge 0$. The other entries of $C_i^k$ are bounded above by $\ov{\eps}_0$. Hence, for each $i$ we can take a further subsequence of the folding sequence to ensure that $C_i^k$ will converge as $k\to \infty$. By passing to further subsequences for each $i=1,\ldots,m$, we may take a subsequence where each $C_1^k,\ldots,C_m^k$ converge and the limit is in the form of $P$. 
    \end{proof}

\subsection{End game: Our folding sequence}

\label{ssec:lowerboundFinale_folding}
In this section, for each $n \ge 4$, we construct a folding sequence of graphs of rank $n$ whose transition matrices satisfy the conditions of \Cref{thm:klp_then_ergodic_folding}.

\begin{theorem} \label{thm:lowerboundFinale_folding}
For each $n \ge 4$, there exists a reduced folding sequence $\calG=(G_k)_{k \ge 0}$ of
graphs of rank $n$ whose limiting tree admits at least $2n-6$ linearly independent ergodic length measures.
\end{theorem}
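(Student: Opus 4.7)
The plan is to construct the folding sequence by playing the role of Bob in the Alice and Bob game from \Cref{thm:TheGame_folding} with $m = 2n-6$. I will design each transition step so that its transposed transition matrix is an $(m, \epsilon)$-matrix with $\epsilon$ as small as Alice requires. Then Alice's winning strategy guarantees that consecutive products $(M^{r,s})^T$ are $(m, \bar\epsilon_r)$-matrices with $\bar\epsilon_r \to 0$, at which point \Cref{thm:klp_then_ergodic_folding} yields at least $2n-6$ linearly independent ergodic length measures on the limit tree $T$.

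Concretely, I would first fix a connected trivalent graph $G$ of rank $n$ (hence with $3n-3$ edges) and designate $2n-6$ of its edges as \emph{Tier 1} and the remaining $n+3$ as \emph{Tier 2}. The key constructive step is to show that for every $\epsilon > 0$, there exists a morphism $f_\epsilon \from G \to G'$ with $G' \cong G$, preserving the Tier 1 / Tier 2 labeling, obtained as a composition of type I folds (passing through intermediate graphs and then back to the combinatorial type of $G$), whose transposed transition matrix $(M_{f_\epsilon})^T$ is an $(2n-6, \epsilon)$-matrix in the sense of \Cref{def:klp_folding}. The heuristic is that folding a Tier 2 edge along a Tier 1 edge $e$ contributes $+1$ to the diagonal entry of $(M_{f_\epsilon})^T$ corresponding to $e$, so by repeating such folds many times I can inflate all Tier 1 diagonal entries while controlling the off-diagonal entries in Tier 1 columns.

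With this building block in hand, I would iterate using \Cref{thm:TheGame_folding}: Alice prescribes thresholds $\{\epsilon_k\}_{k \ge 0}$ following her winning strategy, and I set $G_0 := G$ and $G_{k+1} := f_{\epsilon_k}(G_k)$, concatenating the morphisms into a single folding sequence. By construction, each $(M^{k,k+1})^T$ is an $(2n-6, \epsilon_k)$-matrix, so \Cref{thm:TheGame_folding} (with the identification $A_k = (M^{k,k+1})^T$, under which $A_rA_{r+1}\cdots A_{s-1}$ equals $(M^{r,s})^T$) yields that every $(M^{r,s})^T$ is an $(2n-6, \bar\epsilon_r)$-matrix with $\bar\epsilon_r \to 0$. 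Applying \Cref{thm:klp_then_ergodic_folding} with $m = 2n-6$ then produces the desired $2n-6$ linearly independent ergodic length measures on the limiting tree.

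The principal obstacle is the first stage: exhibiting an explicit rank-$n$ graph $G$ together with a family $\{f_\epsilon\}$ of folding morphisms whose transposed transition matrices realize the prescribed diagonal-dominant structure with precisely $2n-6$ Tier 1 columns. One must arrange the combinatorics so that each Tier 1 column accumulates many copies of its own edge (via folds of Tier 2 edges into it) while no Tier 1 edge is itself folded into another edge (which would create large off-diagonal entries in Tier 1 columns), and so that after sufficiently many such folds the ambient graph can be identified back to $G$ up to relabeling. The value $m = 2n-6$ likely reflects that one needs a pool of at least $n+3$ Tier 2 edges to support the folding operations while maintaining the combinatorial type, consistent with the authors' remark that this lower bound is not expected to be sharp.
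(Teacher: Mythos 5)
Your high-level strategy is exactly the one the paper uses: play Bob's role in the game of \Cref{thm:TheGame_folding} with $m = 2n-6$, then invoke \Cref{thm:klp_then_ergodic_folding}. But the entire substance of the theorem is the construction you defer as ``the key constructive step,'' and your proposal does not carry it out. You acknowledge this yourself by calling it ``the principal obstacle.'' Identifying the framework is not the same as producing a graph $G$ and a family of self-morphisms $\{f_\eps\}$ (each a composition of folds, each returning to the combinatorial type of $G$, each carrying a compatible train track structure so that compositions remain morphisms) with the required transposed-transition-matrix profile. That is where all the work lives.

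Two concrete problems with the sketch you give. First, your heuristic that ``folding a Tier 2 edge along a Tier 1 edge $e$ contributes $+1$ to the diagonal entry of $(M_{f_\eps})^T$ corresponding to $e$'' does not describe what a Stallings fold does to a transition matrix; a single fold identifies two edges and produces a non-square matrix, and the claimed diagonal inflation only emerges from carefully engineered compositions, not from generic folds. What actually makes the $(2n-6,\eps)$ condition hold in the paper's construction is a combination of ``looper'' maps of the form $a_i \mapsto c(\ov{a_{i-1}}b_{i-1}a_{i-1}b_i)^{\alpha_i}a_i$ and $b_i \mapsto c\ov{a_{i-1}}b_{i-1}^{\beta_i}a_{i-1}b_i$ with freely tunable exponents $\alpha_i,\beta_i$, together with ``rein movers'' and a ``rotator,'' all composed into a single train-track self-map $\calF$ of a specific graph $\G_{a_0}$ with $2n-3$ edges (not the trivalent graph with $3n-3$ edges you propose). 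The exponents land directly (up to a factor of $2$) on the Tier-1 diagonal of $M^T$, and the off-diagonal entries in Tier-1 columns are bounded by small constants or by smaller exponents, so Bob can satisfy any $\eps_k$ Alice demands by taking $\alpha_i^{(k)},\beta_j^{(k)}$ large with the $\beta$'s dominating the $\alpha$'s. Second, one must verify that $\calF$ really is a morphism in the sense required (locally injective on edge interiors with respect to a consistent train track structure), which forces specific choices like the $b_i^2$ in the rein mover $\cR_{b_i}$; none of this is visible in your sketch. Until you exhibit such a graph and such a family $\{f_\eps\}$ with an explicit verification of the $(2n-6,\eps)$ condition, the proof is not complete.
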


\begin{proof}
    Let $n \ge 4$. Consider the graph $\G'$ of rank $n-1$, defined and labeled
    as in the left graph of \Cref{fig:lowerbound}. The edges
    $b_0,\ldots,b_{n-3}$ are single-edged loops, and $a_0,\ldots, a_{n-3}$ are
    the edges connecting consecutive $b_i$'s. Now consider a loop $c$, called a
    \emph{rein}. Then for each $e \in E\Gamma'$, we denote by $\G_e$ the graph obtained from $\G'$ by adjoining $c$ to be incident to the edge $e$ such that $(c,e)$ and $(\ov{c},e)$ form the only illegal turns in $\G_e$. See the right graph of \Cref{fig:lowerbound} for an example of $\G_{a_0}$.

\begin{figure}[ht!]
    \centering
    \includegraphics{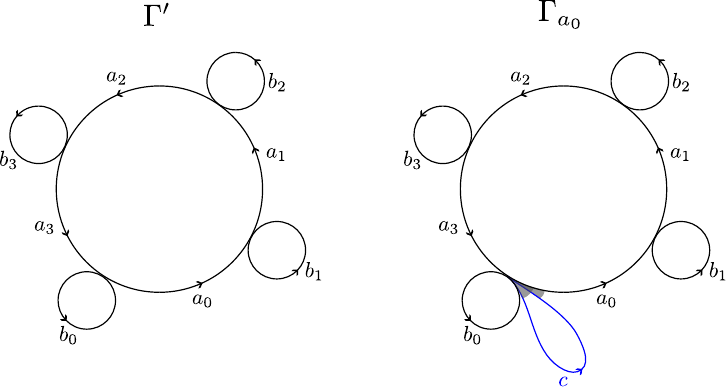}
    \caption{The graph $\G'$ on the left is a graph with $2n-4$ edges with $n-2$
      vertices, where $n=6$. The $a_1,\ldots,a_{n-3}$ edges together form a single loop in the middle, and the $b_0,\ldots,b_{n-3}$ edges are loops incident to each vertex of $\G'$. The graph $\G_{a_0}$ on the right is obtained from $\G'$ by attaching a loop $c$ that shares the same vertex as the initial vertex of $a_0$. It has a train track structure with a single gate $\{c,\ov{c},a_0\}$, which attributes to two illegal turns, shown in gray.}
    \label{fig:lowerbound}
\end{figure}

Now, we consider the following sets of maps between different $\G_{e}$'s. First, consider the $2n-6$ \emph{rein movers} $\cR_{a_0},\ldots,\cR_{a_{n-4}}$ and $\cR_{b_1},\ldots,\cR_{b_{n-3}}$
defined as:
\begin{align*}
    &\cR_{a_i}: \G_{a_i} \to \G_{b_{i+1}},
    &&\cR_{a_i} =
    \begin{cases}
    c \mapsto a_ic\ov{a_i},\\
    \text{Id} & \text{otherwise},
    \end{cases}\\
    &\cR_{b_i}: \G_{b_i} \to \G_{a_i},
    &&\cR_{b_i} =
    \begin{cases}
    c \mapsto b_i^2c\ov{b_i},\\
    \text{Id} & \text{otherwise}.
    \end{cases}
\end{align*}
Notice we have $b_i^2$ in the image of $c$ for $\cR_{b_i}$, to prevent $c$ from
being an invariant conjugacy class of the composition $\calF$ (to be defined below).

Secondly, we fix positive integers $\alpha_1,\ldots,\alpha_{n-3}$ and
$\beta_{1},\ldots,\beta_{n-3}$, and consider the $2n-6$ \emph{loopers} $\cL_{a_1},\ldots,\cL_{a_{n-3}}$ and $\cL_{b_1},\ldots,\cL_{b_{n-3}}$ defined as:
\begin{align*}
    &\cL_{a_i}: \G_{a_i} \to \G_{a_i},
    &&\cL_{a_i} =
    \begin{cases}
        a_i \mapsto c(\ov{a_{i-1}}b_{i-1}a_{i-1}b_i)^{\alpha_{i}}a_i, \\
    \text{Id} & \text{otherwise},
    \end{cases}\\
    &\cL_{b_i}: \G_{b_i} \to \G_{b_i},
    &&\cL_{b_i} =
    \begin{cases}
        b_i \mapsto c\overline{a_{i-1}}b_{i-1}^{\beta_i}a_{i-1}b_i, \\
        \text{Id} & \text{otherwise}.
    \end{cases}
\end{align*}

Finally, we consider the \emph{rotator} $\rho: \G_{a_{n-3}} \to \G_{a_0}$ defined as 
\[
    \rho =
    \begin{cases}
        a_i \mapsto a_{i+1} & \text{for $i=0,\ldots,n-3$,}\\
        b_i \mapsto b_{i+1} & \text{for $i=0,\ldots,n-3$,}\\
        c \mapsto c,
    \end{cases} 
  \]
  where for convenience of notation, we use indices modulo $n-2$, such as $a_{n-2} \equiv a_0$ and $b_{n-2} \equiv b_0$.

Then, our desired train track map $\calF$ is the composition of these maps in the following order:
    \begin{align*}
        \calF&= \rho \circ (\cL_{a_{n-3}} \circ \cR_{b_{n-3}})\circ (\cL_{b_{n-3}} \circ \cR_{a_{n-4}}) \circ \cdots \circ (\cL_{a_1} \circ \cR_{b_1}) \circ (\cL_{b_1} \circ \cR_{a_0})\\
        &= \rho \circ \prod_{i=1}^{n-3}[\cL_{a_{i}} \circ \cR_{b_i} \circ \cL_{b_i} \circ \cR_{a_{i-1}}],
    \end{align*}
    which is a map from $\G_{a_0}$ to itself.

In fact, we can compute the composition as follows. Each of $a_1,
\ldots, a_{n-3}$ or of $b_1, \ldots, b_{n-3}$ is mapped via the associated looper
$\cL_{a_i}$ or $\cL_{b_j}$ only once in the product  $\prod_{i=1}^{n-3}[\cL_{a_{i}} \circ \cR_{b_i} \circ \cL_{b_i} \circ \cR_{a_{i-1}}]$, and only $c$ is altered multiple times via the rein movers. Note that $a_0$ and $b_0$ remain the same until the rotator comes in. For brevity, we write:
\begin{align*}
  \cP_1&:= a_0b_1a_1b_2a_2\cdots b_{n-4}a_{n-4}b_{n-3},\\
  \cP_2&:= a_0b_1^2a_1b_2^2a_2\cdots b_{n-4}^2a_{n-4}b_{n-3}^2,\\
  \cQ_1&:= a_1b_2a_2b_3a_3\cdots b_{n-3}a_{n-3}b_0, \\
  \cQ_2&:= a_1b_2^2a_2b_3^2a_3\cdots b_{n-3}^2a_{n-3}b_0^2, \\
\end{align*}
and for a letter $e$ appearing in $\cP_1$, write $\cP_1[e]$ as the word obtained
from truncating the initial part of $\cP_1$ up to (but not including) $e$. For simplicity,
for $e$ not appearing in $\cP_1$, such as $b_0$ or $a_{n-3}$, define $\cP_1[e]$ as
the empty word. For example, $\cP_1[b_2] = b_2a_2b_3a_3\cdots
b_{n-4}a_{n-4}b_{n-3}$. Define $\cQ_1[e]$ similarly for a letter $e$ appearing in
$\cQ_1$, and let $\cQ_1[e]$ be the empty word if $e$ is not in $\cQ_1$. Define
similarly for $\cP_2[e]$ and $\cQ_2[e]$, for example
$\cQ_2[b_2]=b_2^2a_2b_3^2a_3\cdots b_{n-3}^2a_{n-3}b_0^2$.

Then we can describe the product $\prod_{i=1}^{n-3}[\cL_{a_{i}} \circ \cR_{b_i} \circ \cL_{b_i} \circ \cR_{a_{i-1}}]$ explicitly as follows:
\begin{align*}
    c &\longmapsto \cP_2 c \ov{\cP_1} = a_0b_1^2a_1b_2^2\cdots b_{n-4}^2a_{n-4}b_{n-3}^2c \ov{b_{n-3}a_{n-4}b_{n-4}\cdots b_2a_1b_1a_0},\\
    a_0 &\longmapsto a_0,\\
    b_0 &\longmapsto b_0,\\
    a_k &\longmapsto \cP_2[a_k]c\ov{\cP_1[a_k]}\left(\ov{a_{k-1}}b_{k-1}a_{k-1}b_k\right)^{\alpha_k}a_k,\\
    &\phantom{\!\mapsto}=
    \begin{cases}
        c\left(\ov{a_{n-4}}b_{n-4}a_{n-4}b_{n-3}\right)^{\alpha_{n-3}}a_{n-3} & \text{if $k=n-3$,}\\
        a_k\cdots b_{n-4}^2a_{n-4}b_{n-3}^2c\ov{b_{n-3}a_{n-4}b_{n-4}\cdots a_k}\left(\ov{a_{k-1}}b_{k-1}a_{k-1}b_k\right)^{\alpha_k}a_k & \text{if $k=1,\ldots,n-4,$}
    \end{cases}\\
    b_k &\longmapsto \cP_2[b_k]c\ov{\cP_1[b_k]}\left(\ov{a_{k-1}}b_{k-1}^{\beta_{k}}a_{k-1}b_k\right),\\
    &\phantom{\!\mapsto}=
    \begin{cases}
        b_{n-3}^2c\ov{b_{n-3}}\left(\ov{a_{n-4}}b_{n-4}^{\beta_{n-3}}a_{n-4}b_{n-3}\right) & \text{if $k=n-3$},\\
        b_k^2a_k\cdots b_{n-4}^2a_{n-4}b_{n-3}^2c\ov{b_{n-3}a_{n-4}b_{n-4}\cdots a_kb_k}\left(\ov{a_{k-1}}b_{k-1}^{\beta_{k}}a_{k-1}b_k\right) & \text{if $k=1,\ldots,n-4.$}
    \end{cases}
\end{align*}
Therefore, the image under $\calF$ of each edge can be obtained by applying the
rotator map $\rho$ to the image above, which has the result of increasing the subscripts of the $a_i$'s
and $b_i$'s by 1 where $a_{n-2}\equiv a_0$ and $b_{n-2}\equiv b_0$ while
fixing $c$. Also note for $j=1,2$, we have $\rho(\cP_j) =\cQ_j$, $\rho(\cP_j[a_k]) = \cQ_j[a_{k+1}],$ and $\rho(\cP_j[b_k]) = \cQ_j[b_{k+1}].$ Indeed, the $\calF$-image of each edge is:
\begin{align*}
    c &\longmapsto \cQ_2 c \ov{\cQ_1}= a_1b_2^2a_2b_3^2\cdots b_{n-3}^2a_{n-3}b_0^2c\ov{b_0a_{n-3}b_{n-3}\cdots b_3a_2b_2a_1},\\
    a_0 &\longmapsto a_1,\\
    b_0 &\longmapsto b_1,\\
    a_k &\longmapsto \cQ_2[a_{k+1}]c\ov{\cQ_1[a_{k+1}]}\left(\ov{a_{k}}b_{k}a_{k}b_{k+1}\right)^{\alpha_k}a_{k+1},\\
    &\phantom{\!\mapsto}=
    \begin{cases}
        c\left(\ov{a_{n-3}}b_{n-3}a_{n-3}b_{0}\right)^{\alpha_{n-3}}a_{0} & \text{if $k=n-3$,}\\
        a_{k+1}\cdots b_{n-3}^2a_{n-3}b_{0}^2c\ov{b_{0}a_{n-3}b_{n-3}\cdots a_{k+1}}\left(\ov{a_{k}}b_{k}a_{k}b_{k+1}\right)^{\alpha_k}a_{k+1} & \text{if $k=1,\ldots,n-4,$}
    \end{cases}\\
    b_k &\longmapsto \cQ_2[b_{k+1}]c\ov{\cQ_1[b_{k+1}]}\left(\ov{a_{k}}b_{k}^{\beta_{k}}a_{k}b_{k+1}\right),\\
    &\phantom{\!\mapsto}=
    \begin{cases}
        b_{0}^2c\ov{b_{0}}\left(\ov{a_{n-3}}b_{n-3}^{\beta_{n-3}}a_{n-3}b_{0}\right) & \text{if $k=n-3$},\\
        b_{k+1}^2a_{k+1}\cdots b_{n-3}^2a_{n-3}b_{0}^2c\ov{b_{0}a_{n-3}b_{n-3}\cdots a_{k+1}b_{k+1}}\left(\ov{a_{k}}b_{k}^{\beta_{k}}a_{k}b_{k+1}\right) & \text{if $k=1,\ldots,n-4.$}
    \end{cases}
\end{align*}

Hence, with $(e^1,\ldots,e^{2n-3})=(b_1,\ldots,b_{n-3},a_1,\ldots,a_{n-3},a_0,b_0,c)$, the transition matrix for $\calF$ is:

\begin{center}
  \scalebox{0.85}{$
    M=\begin{pmatrix}
    \setlength\arraycolsep{5pt}
    \begin{matrix}
            \beta_1    &                &                        &            &                &   \\
               4       &  \beta_{2}     &                        &            &                &   \\
               3       &      4         &   \beta_3              &            &                &   \\
            \vdots     &   \vdots       &  \vdots                & \ddots     &                &   \\
               3       &      3         &  3                     & \cdots     & \beta_{n-4}    &   \\
               \ph{.}3\ph{.}       &      \ph{.}3\ph{.}         &  \ph{.}3\ph{.}                     & \cdots     & 4              & \beta_{n-3}
    \end{matrix}
    &
    \rvline 
    &
    \setlength\arraycolsep{8pt}
    \begin{matrix}
          \alpha_{1}   &                      &                 &                &                &      \\
          \alpha_{1}   &   \alpha_{2}         &                 &                &                &      \\
              3        &   \alpha_{2}         &   \alpha_3      &                &                &      \\
           \vdots      &   \vdots             &   \vdots        & \ddots         &                &      \\
              3        &       3              &      3          & \cdots         & \alpha_{n-4}   &        \\
              \ph{1}3\ph{1}        &       \ph{1}3\ph{1}              &      \ph{1}3\ph{1}          & \cdots         & \ph{.}\alpha_{n-4}\ph{.}   &     \alpha_{n-3}
    \end{matrix}
    &
    \rvline
    &
    \begin{matrix}
      0 & 1      & 0 \\
      0 & 0      & 3 \\
      0 & 0      & 3 \\
  \vdots & \vdots & \vdots \\
      0 & 0      & 3 \\
      0 & 0      & 3
    \end{matrix}\\
    \hline 
    \setlength\arraycolsep{7pt}
    \begin{matrix}
        2             &                    &           &          &                  &  \\
        2             & 2                  &           &          &                  &  \\
        2             & 2                  &  2        &          &                  &  \\
        \ph{!}\vdots\ph{!}        & \vdots             &  \ph{!}\vdots\ph{!}   & \ph{!}\ddots\ph{!}   & \ph{\beta_{n-}} & \ph{\beta_{n}} \\
        2             & 2                  &  2        & \cdots   &    2             &  \\
        2             & 2                  &  2        & \cdots   &    2             & 2
    \end{matrix}
    &
    \rvline 
    &
    \setlength\arraycolsep{6.5pt}
    \begin{matrix}
        2\alpha_{1}    &                &                   &            &               &        \\
            3          &  2\alpha_{2}   &                   &            &               &        \\
            2          &      3         & 2\alpha_{3}       &            &               &        \\
            \vdots     &  \vdots        &   \vdots          & \ddots     &               &        \\
            2          &      2         &      2            & \cdots     & 2\alpha_{n-4} &        \\
            \ph{1.}2\ph{1.}          &      \ph{1.}2\ph{1.}         &      \ph{1\!.}2\ph{1\!.}            & \cdots     &      3        & \ph{1}2\alpha_{n-3}\ph{1}
    \end{matrix}
    &
    \rvline
    &
    \begin{matrix}
      1 & 0      & 2 \\
      0 & 0      & 2 \\
      0 & 0      & 2 \\
  \vdots & \vdots & \vdots \\
      0 & 0      & 2 \\
      0 & 0      & 2
    \end{matrix}\\
    \hline
    \setlength\arraycolsep{9pt}
    \begin{matrix}
       0 & 0 & 0 & \cdots & \ph{|}0\ph{|} & \ph{|}0\ph{|}\\
       3 & 3 & 3 & \cdots & \ph{|}3\ph{|} & \ph{|}4\ph{|}\\
       1 & 1 & 1 & \cdots & \ph{|}1\ph{|} & \ph{|}1\ph{|}
    \end{matrix}
    &
    \rvline
    &
    \setlength\arraycolsep{8.3pt}
    \begin{matrix}
        \ph{!}0\ph{!} & \ph{!}0\ph{!} & \ph{!}0\ph{!} & \cdots & \ph{\alpha}1\ph{\alpha} & \ph{\alpha}1\ph{\alpha}           \\
        \ph{1}3\ph{1} & \ph{1}3\ph{1} & \ph{1}3\ph{1} & \cdots & \ph{\alpha}1\ph{\alpha} & \ph{1}\alpha_{n-3}\ph{1}\\
        \ph{!}1\ph{!} & \ph{!}1\ph{!} & \ph{!}1\ph{!} & \cdots & 1 & 1            
    \end{matrix}
    &
    \rvline
    &
    \begin{matrix}
        0 & 0 & 0\\
        0 & 0 & 3\\
        0 & 0 & 1
    \end{matrix}
    \end{pmatrix}.
    $}
  \end{center}

Hence, the transpose of $M$ is of the following form:

\begin{center}
  \scalebox{0.8}{$
    M^T=\begin{pmatrix}
    \setlength\arraycolsep{5pt}
    \begin{matrix}
       \ph{\ 1}\beta_{1}\ph{\ } &  \ph{\ 1}4\ph{\ 1}             &   \ph{\ }3\ph{\ }                    & \cdots     &3              & 3 \\
                       &  \beta_{2}     &   4                    & \cdots     & 3              & 3 \\
                       &                &   \beta_{3}            & \cdots     & 3              & 3 \\
                       &                &                        & \ddots     & \vdots         & \vdots \\
                       &                &                        &            & \beta_{n-4}    & 4 \\
                       &                &                        &            &                & \beta_{n-3}
    \end{matrix}
    &
    \rvline 
    &
    \setlength\arraycolsep{8pt}
    \begin{matrix}
        \ph{2}2\ph{2}             &  \ph{2}2\ph{2}                 &   \ph{1}2\ph{1}       &   \cdots        &   \ph{11}2\ph{11} & \ph{11}2\ph{11} \\
                                  &  2                 &   2       &   \cdots   &   2  & 2 \\
                                  &                    &   2       &   \cdots   &    2      & 2 \\
                                  &                    &           &   \ddots   &   \vdots  & \vdots \\
                                  &                    &           &            &    2      & 2 \\
                                  &                    &           &            &           & 2
    \end{matrix}
    &
    \rvline
    &
    \begin{matrix}
      0 & \ph{\alpha\ \!}3\ph{\alpha\ \!}      & 1 \\
      0 & 3      & 1 \\
      0 & 3      & 1 \\
  \vdots & \vdots & \vdots \\
      0 & 3      & 1 \\
      0 & 4      & 1
    \end{matrix}\\
    \hline 
    \setlength\arraycolsep{7pt}
    \begin{matrix}
          \alpha_{1}   &   \alpha_{2}         &   3             & \cdots         & 3         &     3 \\
                       &   \alpha_{2}         &   \alpha_{3}    & \cdots         & 3         &     3 \\
                       &                      &   \alpha_{3}    & \cdots         & 3         &     3 \\
                       &                      &                 & \ddots         & \vdots    &     \vdots \\
                       &                      &                 &                & \ph{1}\alpha_2\ph{1}  &  \alpha_1 \\
                       &                      &                 &                &           &     \alpha_1
    \end{matrix}
    &
    \rvline 
    &
    \setlength\arraycolsep{6.5pt}
    \begin{matrix}
        2\alpha_{1}    &  3             &  2                 & 2          & \cdots    & 2 \\
                       &  2\alpha_{2}   &  3                 & 2          & \cdots    & 2 \\
                       &                &  2\alpha_3         & 3          & \cdots    & 2 \\
                       &                &  \ddots            & \ddots     & \ddots    & \vdots \\
                       &                &  \ph{\alpha_{n-5}} &            & 2\alpha_{n-4} & 3 \\
                       &                &                    &            &           & 2\alpha_{n-3}
    \end{matrix}
    &
    \rvline
    &
    \begin{matrix}
      0 & 3      & 1 \\
      0 & 3      & 1 \\
      0 & 3      & 1 \\
  \vdots & \vdots & \vdots \\
      0 & 3      & 1 \\
      1 & \alpha_{n-3}      & 1
    \end{matrix}\\
    \hline
    \setlength\arraycolsep{9pt}
    \begin{matrix}
       \ph{\ }0\ph{\ }            & 0 & 0 & \cdots & 0 & \ph{\ }0\ph{\ }\\
        1            & 0 & 0 & \cdots & 0 & 0\\
        0            & 3 & 3 & \cdots & 3 & 3
    \end{matrix}
    &
    \rvline
    &
    \setlength\arraycolsep{8.3pt}
    \begin{matrix}
       1 & 0 & 0 & \cdots & 0 & 0\\
       0 & 0 & 0 & \cdots & 0 & 0\\
       \ph{1\!}2\ph{1\!} & \ph{1}2\ph{1} & \ph{1}2\ph{1} & \cdots & \ph{\alpha\ \!}2\ph{\alpha\ \!} & \ph{\alpha\ \!}2\ph{\alpha\ \!}
    \end{matrix}
    &
    \rvline
    &
    \begin{matrix}
        0 & 0 & 0\\
        0 & 0 & 0\\
        0 & \ph{\alpha\ \!}3\ph{\alpha\ \!} & 1
    \end{matrix}
    \end{pmatrix}.
    $}
  \end{center}

Finally, we form a folding sequence $\calG = (G_k)_{k \ge 0}=
\{\phi_{k}:G_{k} \to G_{k+1}\}_{k \ge 0}$ of graphs of rank $n$ with
$G_k = \G_{a_0}$ and for $k \ge 0$, let $\phi_{k} = \calF^{(k)}$,
which is the map $\calF$ with replacing
$(\beta_1,\ldots,\beta_{n-3},\alpha_1,\ldots,\alpha_{n-3}) =
\left(\beta_1^{(k)},\ldots,\beta_{n-3}^{(k)},\alpha_1^{(k)},\ldots,\alpha_{n-3}^{(k)}\right)$,
where the $\alpha_i^{(k)}$ and $\beta_j^{(k)}$ are chosen following Bob's role in
\Cref{thm:TheGame_folding} with $m=2n-6$. 
We note that by construction, in particular by the rotator $\rho$, the folding sequence does not have a proper invariant subgraph, therefore it is reduced. 
Now, note that for any $\epsilon_k>0$ that Alice chooses, Bob can
choose large enough
$\alpha_i^{(k)}$ and $\beta_j^{(k)}$ so that the transposed matrix is
a $(2n-6,\epsilon_k)$-matrix. 
So, \Cref{thm:TheGame_folding} guarantees the
existence of $\{\ov{\eps_r}\}_{r=0}^\infty$ for the assumptions in
\Cref{thm:klp_then_ergodic_folding}.
Therefore, we conclude that the limiting tree admits at least $m=2n-6$ linearly independent
ergodic length measures. \end{proof}

\section{Lower Bound $2n-6$: Unfolding Sequence}\label{sec:lowerbound_unfolding}
In this section, we estimate the lower bound for the number of linearly independent ergodic measures supported on the legal lamination of a given unfolding sequence of graphs of rank $n$.
\Cref{thm:NPR-unfoldingsequence} reduces our problem to finding the number of independent ergodic measures supported on the unfolding sequence.

\subsection{Diagonal-dominant matrices}
\label{ssec:ddmatrix}
Let $n$ be a positive integer. Similar to \Cref{ssec:ddmatrix_folding}, here we consider a collection of diagonal-dominant $n \times n$ matrices
whose product remains diagonal-dominant. This time however, the
definition is a bit more involved. The diagonal entries will be
ordered from largest to smallest and we will allow entries below the
diagonal to be comparable to the diagonal entry in the same column.

\begin{definition} \label{def:klp}
  Let $p > 1$ and $\eps, \delta>0$, and let $m$ be a positive integer such that $m \le n$. An $n \times n$ nonnegative matrix $A =
  (a_{ij})$ with positive diagonal entries is called an
  \textbf{$(m,p,\eps,\delta)$-matrix} if the following holds:
  \begin{enumerate}[label=(\alph*)]
  \item \textbf{(Diagonal entries)} for $i < j \le m$, we have
    \[
      \frac{a_{jj}}{a_{ii}} < \delta.
    \]
  \item \textbf{(Above diagonal entries)} for $i < j \le m$, we have
    \[
      \frac{a_{ij}}{a_{jj}} < \eps.
    \]
  \item \textbf{(Below diagonal entries)} for $i < j \le m$, we have
    \[
      \frac{a_{ji}}{a_{ii}} < p.
    \]
  \item \textbf{(Tier 2 entries)} for $k \le m < j$ and any $i$, we have
    \[
      \frac{a_{ij}}{a_{kk}} < p\delta.
    \]
  \end{enumerate}
\end{definition}
Recall from \Cref{ssec:ddmatrix_folding} that we refer to the first $m$ columns of the
matrix $A$ as \textbf{Tier 1} columns, and the other ones as \textbf{Tier 2}
columns. 
We will be primarily considering $(m, p, \eps, \delta)$-matrices with
small $\delta$ and $\eps$ and with $\delta$ much smaller than $\eps$. We will
refer to these matrices as \emph{diagonal-dominant} matrices. 
 Also for an $n \times n$ matrix, denote by $K_A$ the upper
bound of the ratios of any entry to any diagonal entry in a Tier 1 column. Namely:
\[
  K_A := \max_{\substack{i,j\le n\\k\le m}} \frac{a_{ij}}{a_{kk}}.
\]

Given a diagonal entry in a Tier 1 column, the ratio of the other entries of $A$ by this entry is
bounded according to the following schematic in \Cref{fig:ratios}.
\begin{figure}[ht!]
  \centering
  \includegraphics[width=.3\textwidth]{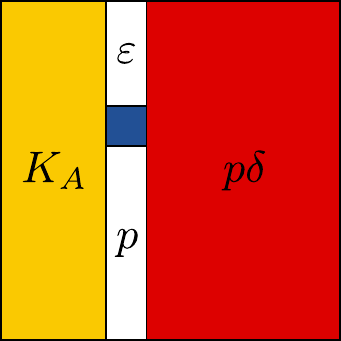}
  \caption{The blue cell represents a given diagonal entry in a Tier 1 column.
    Then each value in the other cells shows an upper bound of the ratio of the entry in
  that cell to the given diagonal entry. Of course, $K_A$ bounds all such
  ratios, but $p$ and $p\delta$ are typically much better bounds.}
  \label{fig:ratios}
\end{figure}

Indeed, the entries above and below the diagonal are controlled by (b) and
(c). An entry to the left is controlled by $K_A$ by definition. For
entries to the right which are in a Tier 1 column, the ratio bounded by $p$ times the
diagonal entry in the same column, which is bounded by $\delta$ times the
original entry by (a). For the entries to the right which are in a Tier 2 column, the ratio is
also bounded by $p\delta$ by (d).  

\begin{lemma}  \label{lem:matrixproduct}
  Let $A$ be an $n \times n$, $(m,p_A,\eps_A,\delta_A)$-matrix with
  $K_A$ defined 
  as above. If $A'$ is an $n \times n$ $(m, p, \epsilon, \delta)$-matrix, then there exist $p_{AA'}>1$, and $\eps_{AA'}, \delta_{AA'}>0$ such that $AA'$ is an
  $(m,p_{AA'}, \eps_{AA'},\delta_{AA'})$-matrix with
  \begin{enumerate}[label=(\arabic*)]
  \item $p_{AA'} = p_A + nK_A\eps + np_A\delta_Ap$,
  \item $\eps_{AA'} = \eps_A + nK_A\eps + np_A\delta_Ap$, and
  \item $\delta_{AA'} = nK_Ap\delta.$
  \end{enumerate}
\end{lemma}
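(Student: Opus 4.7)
The plan is to expand $B=AA'$ entrywise as $b_{ij}=\sum_l a_{il}a'_{lj}$, observe that $b_{ll}\geq a_{ll}a'_{ll}>0$ so the required diagonal positivity holds, and then check each of (a)--(d) for $B$ by bounding ratios of the form $b_{ij}/b_{kk}$ against $\sum_l (a_{il}/a_{kk})(a'_{lj}/a'_{kk})$. The engine is the schematic in \Cref{fig:ratios}, which tells us that for any Tier 1 diagonal entry $a_{kk}$ and any cell $(i,j)$, the ratio $a_{ij}/a_{kk}$ is at most $\epsilon_A$ (above the diagonal in column $k$), $p_A$ (below the diagonal in column $k$), $K_A$ (in a column to the left of $k$), or $p_A\delta_A$ (in a column to the right of $k$, whether Tier 1, by factoring through that column's diagonal and invoking (a), or Tier 2, directly via (d)). The analogous schematic applies to $A'$.

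I would verify (b) and (c) in tandem. For (b), with $i<j\leq m$, split the sum on $l$ into the distinguished index $l=j$, indices $l<j$, and indices $l>j$ (lumping $l\leq m$ with $l>m$). The $l=j$ term contributes exactly $a_{ij}/a_{jj}<\epsilon_A$. Each left term is at most $K_A\cdot\epsilon$, combining the ``left'' bound from $A$ with the above-diagonal bound from $A'$. Each right term is at most $p_A\delta_A\cdot p$, combining the ``right'' bound from $A$ with the below-diagonal bound from $A'$. Summing the at most $n$ terms gives $\epsilon_{AA'}=\epsilon_A+nK_A\epsilon+np_A\delta_A p$ as claimed. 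Condition (c) is identical after swapping the roles of rows and columns, with the distinguished term $l=i$ now contributing $a_{ji}/a_{ii}<p_A$, and the same auxiliary estimates producing the claimed $p_{AA'}$.

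For (a), with $i<j\leq m$, every summand acquires an extra factor of $\delta$: the $l=j$ term from $a'_{jj}/a'_{ii}<\delta$, and every other term by writing $a'_{lj}/a'_{ii}=(a'_{lj}/a'_{jj})(a'_{jj}/a'_{ii})$ and applying (a) to $A'$. The schematic then forces a uniform per-term bound of $K_A\cdot p\cdot \delta$, and summing the at most $n$ terms gives $\delta_{AA'}=nK_A p\delta$. Condition (d) for $B$ is immediate from the schematic alone: for $k\leq m<j$ the second factor $a'_{lj}/a'_{kk}$ is already bounded by $p\delta$ via (d) for $A'$, while $a_{il}/a_{kk}\leq K_A$, so $b_{ij}/b_{kk}\leq nK_A p\delta=\delta_{AA'}\leq p_{AA'}\delta_{AA'}$ since $p_{AA'}>1$.

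The main obstacle is purely organizational bookkeeping in the case split on $l$. The key subtlety is that the ``to the right'' bound $p_A\delta_A$ must absorb both the Tier 1 and Tier 2 contributions in the right tail uniformly; otherwise a cross-term of size $K_A\cdot p$ would appear and spoil the clean target bound for $\epsilon_{AA'}$. Once the schematic is applied uniformly across both matrices and this single bookkeeping point is respected, each of the four subcases collapses to the claimed estimate mechanically.
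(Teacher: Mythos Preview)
Your proposal is correct and follows essentially the same approach as the paper: you bound $(AA')_{ij}/(AA')_{kk}$ by $\sum_l (a_{il}/a_{kk})(a'_{lj}/a'_{kk})$, then split on the position of $l$ relative to the relevant diagonal index and invoke the schematic from \Cref{fig:ratios} term by term. The paper proves (a) first using the uniform ``to the right'' bound $p\delta$ on $a'_{kj}/a'_{ii}$ directly, whereas you make the factoring $a'_{lj}/a'_{ii}=(a'_{lj}/a'_{jj})(a'_{jj}/a'_{ii})$ explicit; this is the same computation, and your remark that the right tail must absorb Tier~1 and Tier~2 uniformly via $p_A\delta_A$ is exactly the point underlying the paper's schematic.
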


\begin{proof}
  Let $A=(a_{ij})$ and $A'=(a'_{ij})$ be as given in the statement. Then $AA'$
  is a nonnegative matrix with positive diagonal entries. 
  We now check that conditions (a--d) in \Cref{def:klp} are satisfied for the matrix $AA'$, with $p_{AA'}$, $\eps_{AA'}$, and $\delta_{AA'}$ as desired.

  We first establish (a). Let $i < j\leq m$. Note that
  \begin{align}
    \frac{(AA')_{jj}}{(AA')_{ii}} \le \frac{\sum_{k}a_{jk}a'_{kj}}{a_{ii}a_{ii}'}
    &= \sum_{k} \frac{a_{jk}}{a_{ii}} \frac{a_{kj}'}{a_{ii}'}\notag\\
    &< \sum_{k} (K_A) \cdot (p\delta) =nK_Ap\delta,\tag{$\ast$}
  \end{align}
  so we can take $\delta_{AA'} = nK_Ap\delta$.

  We now consider (b). Let $i < j\leq m$. Similar to above, we see that
  \begin{align*}
    \frac{(AA')_{ij}}{(AA')_{jj}} \le \frac{\sum_{k}a_{ik}a'_{kj}}{a_{jj}a_{jj}'}
    &= \sum_{k} \frac{a_{ik}}{a_{jj}} \frac{a_{kj}'}{a_{jj}'}.
  \end{align*}
  This time, we consider the value of each summand depending on the value of $k$.  
  \begin{itemize}
  \item $\mathbf{(k<j)}$ We have 
    $\frac{a_{ik}}{a_{jj}} \le  K_A$ and $\frac{a_{kj}'}{a_{jj}'} < \eps$, so
    each such summand is bounded above by $K_A \eps$.
  \item $\mathbf{(k>j)}$ We have 
    $\frac{a_{ik}}{a_{jj}} \le p_A\delta_A$ and $\frac{a_{kj}'}{a_{jj}'} < p$, so
    each such summand is less than $p_A\delta_Ap$.
  \item $\mathbf{(k=j)}$ Here the summand is equal to $\frac{a_{ij}}{a_{jj}}$,
    which is bounded above by $\eps_A$.
  \end{itemize}
  All in all, we have that 
  \[
    \frac{(AA')_{ij}}{(AA')_{jj}} \le \sum_{k} \frac{a_{ik}}{a_{jj}}
    \frac{a_{kj}'}{a_{jj}'} < \eps_A + nK_A\eps + np_A\delta_Ap,
  \]
  so we take $\eps_{AA'} = \eps_A + nK_A\eps + np_A\delta_Ap$.

  In a similar way, we can prove (c). Let $i< j\leq m$, and then
  \begin{align*}
    \frac{(AA')_{ji}}{(AA')_{ii}} &\le \frac{\sum_{k}a_{jk}a'_{ki}}{a_{ii}a_{ii}'}
    = \sum_{k} \frac{a_{jk}}{a_{ii}} \frac{a_{ki}'}{a_{ii}'} \\
    &= \left(  \sum_{k<i} \frac{a_{jk}}{a_{ii}} \frac{a_{ki}'}{a_{ii}'} \right) + \left( \sum_{k>i} \frac{a_{jk}}{a_{ii}} \frac{a_{ki}'}{a_{ii}'}\right) + \frac{a_{ji}}{a_{ii}}\\
    &< nK_A\eps + np_A\delta_Ap + p_A,
  \end{align*}
  so we take $p_{AA'} = p_A + nK_A\eps + np_A\delta_Ap$.

  Finally, we verify (d). Let $t \le m <j$.
  Then for any $i$,
  \[
    \frac{(AA')_{ij}}{(AA')_{tt}} \le \sum_{k} \frac{a_{ik}}{a_{tt}}
    \frac{a'_{kj}}{a'_{tt}} < nK_Ap\delta = \delta_{AA'}< p_{AA'}\delta_{AA'}
  \]
  since $p_{AA'}>p_A>1$. 
\end{proof}

\subsection{The game: Constructing matrices with increasingly dominant
  diagonal entries}\label{ssec:thegame}

As in \Cref{ssec:thegame_folding}, we consider another game by Alice and Bob. Due to the more involved definition of the diagonal dominant matrix used in this section, they engage in a game with more parameters as well.
Alice and Bob play the following game. Fix $n\ge 2$, $m\le n$, and $p>1$. 
First, Alice chooses $(\eps_0,\delta_0)$. Then Bob chooses an $n \times n$,
$(m,p,\eps_0,\delta_0)$-matrix $A_0$. Then Alice chooses $(\eps_1,\delta_1)$ and
Bob chooses an $n \times n$, $(m,p,\eps_1,\delta_1)$-matrix $A_1$. Continuing
inductively, they construct a sequence of $n \times n$ matrices
$\{A_j\}_{j \ge 0}$. For $r<s$, put $A_{r,s} = A_rA_{r+1}\cdots A_s$. Write $A_{r,r}:=A_r$.
Alice wins the game if there exist triples $\{(\ov{p}_r,
\ov{\eps}_r,\ov{\delta}_r)\}_{r \ge 0}$ such that
\begin{winninglist}
\begin{enumerate}
\item for each $s>r$, $A_{r,s}$ is an $(m,\ov{p}_r,\ov{\eps}_r,\ov{\delta}_r)$-matrix,
\item $\{\ov{p}_r\}_{r \ge 0}$ is uniformly bounded,
\item $\ov{\eps}_r \to 0$ as $r \to \infty$, and
\item $\ov{\delta}_r \to 0$ as $r \to \infty$.
\end{enumerate}
\caption{Conditions on $\{(\ov{p}_r,
\ov{\eps}_r,\ov{\delta}_r)\}_{r \ge 0}$ for Alice to win in the Alice-Bob game.}
\label{list:Alice_win_unfolding}
\end{winninglist}
Otherwise, Bob wins the game.

\begin{theorem}\label{thm:TheGame}
  Alice has a winning strategy. 
\end{theorem}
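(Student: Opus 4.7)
My plan is to adapt Alice's winning strategy from the folding-case game (\Cref{thm:TheGame_folding}) to handle the three coupled recursions supplied by \Cref{lem:matrixproduct}. Writing $(p_{r,s}, \epsilon_{r,s}, \delta_{r,s})$ for the parameters of $A_{r,s}$ and $K_{r,s}:=K_{A_{r,s}}$, I will have Alice aim for the targets $\bar{p}_r = 2p$, $\bar{\epsilon}_r = 2^{-r}$, and $\bar{\delta}_r = \delta^{*}\cdot 2^{-r}$, where $\delta^{*} := 1/(16np^2)$. Since $\bar{p}_r$ is uniformly bounded and both $\bar{\epsilon}_r, \bar{\delta}_r \to 0$, these targets witness all four winning conditions.

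At round $s+1$ Alice has observed $A_0,\dots,A_s$, so she knows $P_s := \max\{K_{r,s'} : 0 \leq r \leq s' \leq s\}$. The plan is to have her choose
\[
\epsilon_{s+1} \le \frac{2^{-(s+3)}}{nP_s}\quad\text{and}\quad \delta_{s+1} \le \frac{\delta^{*}\cdot 2^{-(s+1)}}{nP_s\, p},
\]
with the initial round taking $\epsilon_0 = 1/4$ and $\delta_0 = \delta^{*}$. The verification will be by induction on $s$, maintaining three invariants for every $r \leq s$: $p_{r,s} \leq 2p$, $\epsilon_{r,s} \leq 2^{-r}$, and $\delta_{r,s} \leq \delta^{*}\cdot 2^{-s}$. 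Note that $K_{r,s}\geq 1$ (taking $i=j=k$ in the max defining $K$), so $P_s \geq 1$ and Alice's choices are automatically at most $2^{-(s+3)}$ and $\delta^{*}\cdot 2^{-(s+1)}$ respectively.

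The inductive step is a direct computation using \Cref{lem:matrixproduct} with $A = A_{r,s}$ and $A' = A_{s+1}$. The $\delta$-invariant is immediate from the choice above: $\delta_{r,s+1} = nK_{r,s}\,p\,\delta_{s+1} \leq nP_s\,p\,\delta_{s+1} \leq \delta^{*}\cdot 2^{-(s+1)}$. For $p$ and $\epsilon$, the common increment $nK_{r,s}\epsilon_{s+1} + np_{r,s}\delta_{r,s}\,p$ splits into two pieces, each at most $2^{-(s+3)}$: the first by the choice of $\epsilon_{s+1}$, the second because the stage-$s$ invariants give $np_{r,s}\delta_{r,s}\,p \leq 2np^2 \delta^{*} 2^{-s} = 2^{-(s+3)}$ by the defining value of $\delta^{*}$. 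Telescoping these $\leq 2^{-(s+2)}$ increments from $s'=r$ onward yields $p_{r,s+1} \leq p + 2^{-(r+1)} \leq 2p$ and $\epsilon_{r,s+1} \leq \epsilon_r + 2^{-(r+1)} \leq 2^{-r}$, the latter using $\epsilon_r \leq 2^{-(r+1)}$, which is automatic from Alice's choice rule.

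The hardest part, compared with the folding-case game, will be handling the coupling between the $\epsilon$- and $\delta$-recursions: the $np_{r,s}\delta_{r,s}\,p$ term in the $\epsilon$-increment at round $s+1$ is governed by $\delta_{r,s}$, which Alice set one round earlier. Keeping this term small enough forces her to shrink $\delta_s$ (relative to $P_{s-1}$ and the constant $\delta^{*}$) already at round $s$, in anticipation of its use at round $s+1$. Once the decay rate $\delta^{*}\cdot 2^{-s}$ is threaded consistently through both recursions, the induction closes and Alice wins with the advertised bounds.
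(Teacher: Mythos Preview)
Your proposal is correct and follows essentially the same strategy as the paper: Alice looks at all matrices produced so far, sets $P_s=\max K_{r,s'}$, and chooses $\epsilon_{s+1},\delta_{s+1}$ small relative to $P_s$ so that the recursions of \Cref{lem:matrixproduct} telescope. The one genuine difference is in how the $p$-recursion is closed. The paper leaves $\delta_{r,s}<2^{-s}$ and absorbs the $np_{r,s}\delta_{r,s}p$ term multiplicatively, obtaining $p_{r,s+1}\le p_{r,s}(1+np/2^s)$ and then appealing to convergence of the infinite product $\prod_k(1+np/2^k)$. You instead fix the explicit constant $\delta^*=1/(16np^2)$ in advance so that the same term is bounded \emph{additively} by $2^{-(s+3)}$, yielding $p_{r,s+1}\le p_{r,s}+2^{-(s+2)}$ and hence $p_{r,s}\le p+2^{-(r+1)}\le 2p$ directly. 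This buys you a slightly cleaner argument with explicit targets $\bar p_r=2p$, $\bar\epsilon_r=2^{-r}$, $\bar\delta_r=\delta^*2^{-r}$ and no infinite product, at the cost of carrying one more constant through the induction; the paper's version is marginally less explicit but perhaps more transparent about why no tuning of $\delta^*$ is really needed.
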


\begin{proof}
    We will show that Alice can make choices such that conditions
    (1--4) in \Cref{list:Alice_win_unfolding} hold. It will be convenient to simplify notation in this
    proof and denote by $K_{r,s},p_{r,s},\eps_{r,s},\delta_{r,s}$ the corresponding
    constants for the matrix $A_{r,s}$. In particular,
    $\epsilon_{r,r}=\epsilon_r$ and $\delta_{r,r}=\delta_r$.
    We also
    have $p>1$ without the subscript, chosen before the game starts.
  For Alice to choose $(\eps_s,\delta_s)$, it will be important to see all
  matrices $A_r$ for $r<s$. First, we claim the following:

  \begin{claim} \label{claim:smalldelta}
    By choosing $\delta_i$ sufficiently
    small at each step, Alice can ensure that 
    \[
      \delta_{r,s} < \frac{1}{2^s},
    \]
    for all $r\geq 0$ and $s > r$.
  \end{claim}

  \begin{proof}[Proof of the claim]
      \renewcommand{\qedsymbol}{$\triangle$}
      Let Alice choose $\delta_0 = 1$, and for each subsequent $s > 0$, choose $\delta_s$ so that \[nP_sp\delta_s < 2^{-s},\] where $P_s = \max\{K_{r, t} \mid r \le t < s \}$.
      Then by \Cref{lem:matrixproduct} for $A_{r,s} = A_{r,s-1}A_s$, we have
      \[
        \delta_{r, s} \le nK_{r, s-1}p\delta_s \leq nP_sp\delta_s < 2^{-s} <
      2^{-r}
    \]
    holds for each $r \ge 0$ and $s>r$, concluding the claim.
  \end{proof}
In light of \Cref{claim:smalldelta}, set $\ov{\delta}_r:= 2^{-r}$ for $r \ge 0$.
  
  Next, note that the increase in the $p$-constant and the $\eps$-constant of
  $AA'$ is bounded by the same amount
    $nK_A\eps + np_A\delta_Ap.$
  We apply this to $A_{r,s}A_{s+1}=A_{r,s+1}$.
  Then the corresponding increment to $\eps_{r,s}$ and $p_{r,s}$ is:
  \[
    nK_{r,s}\eps_{s+1} + np_{r,s}\delta_{r,s}p.
  \]
  By choosing small $\eps_{s+1}$, Alice can ensure
  that the first term is arbitrarily small. \Cref{claim:smalldelta} ensures the second 
  term to be smaller than $\frac{npp_{r,s}}{2^s}$. Hence, we can take the sum
  of the two terms to be no larger than $\frac{npp_{r,s}}{2^s}$. In other words,
  by choosing small $(\eps_{s+1},\delta_{s+1})$, we have for $r \le s$:
  \begin{align}
    p_{r,s+1} &\le p_{r,s} + \frac{npp_{r,s}}{2^s} = p_{r,s}\left( 1 + \frac{np}{2^s} \right), \label{ineq:p_bound}\tag{$\ast_1$} \\ 
  \eps_{r,s+1} &\le \eps_{r,s} + \frac{npp_{r,s}}{2^s}. \label{ineq:eps_bound}\tag{$\ast_2$}
  \end{align}
  Multiplicatively telescoping Inequality \eqref{ineq:p_bound}, and noting $p_{r,r}=p$,
  \[
    p_{r,s+1} \le p \left( 1 + \frac{np}{2^{r}} \right)\cdots\left( 1 +
      \frac{np}{2^{s-1}} \right)\left( 1 + \frac{np}{2^{s}} \right).
  \]
  Note the infinite product
  \begin{align}
    \label{ineq:infiniteProduct_p}
    \prod_{k = r}^\infty \left( 1+ \frac{np}{2^k} \right) \tag{$\ast\!\ast$}
  \end{align}
  converges, since the infinite sum
    $\sum_{k = r}^\infty \frac{np}{2^k}$
  converges. Take $\ov{p}_r$ to be the infinite
  product \eqref{ineq:infiniteProduct_p} multiplied by $p$. Since
  $p=p_{r,r}<\ov{p}_r$, it follows that $p_{r,s}\le \ov{p}_r$ for all
  $r \ge 0$ and $s \ge r$, and $\{\ov{p}_r\}_{r \ge 0}$ is uniformly bounded
  above by $
    \prod_{k = 0}^\infty \left( 1+ \frac{np}{2^k} \right)$

  Similarly, picking up from Inequality \eqref{ineq:eps_bound}, we have
  \[
  \eps_{r,s+1} \le \eps_{r,s} + \frac{npp_{r,s}}{2^s} \le
  \eps_{r,s} + \frac{np\ov{p}_r}{2^s},
\]
for each $r \ge 0$ and $s \ge r$.
By telescoping with a fixed $r$, and varying $s$ to $r$,
\[
  \eps_{r,s+1} \le \eps_r + \sum_{k=r}^s \frac{np\ov{p}_r}{2^k}.
\]
Therefore, by letting $\ov{\eps}_r$ be 
  $\ov{\eps}_r := \epsilon_r+\sum_{k=r}^\infty \frac{np\ov{p}_r}{2^k},$
we have that $\eps_{r,s} \le \ov{\eps}_r$ for every $r \ge 0$ and $s \ge r$, and that
$\ov{\eps}_r \to 0$ as $r \to \infty$ (since the infinite sum converges to $0$
as $r \to 0$ and Alice can ensure that
$\epsilon_r\to 0$), concluding the proof.
\end{proof}

\subsection{Constructing ergodic currents from diagonal-dominant matrices}

\begin{theorem} \label{thm:klp_then_ergodic}
Let $(G_s)_{s \le 0}$ be a reduced unfolding sequence with legal lamination $\Lambda \subset G_0$, and $\calM = (M^{r,s})_{r \le s \le 0}$ be the inverse system of $n \times n$ transition matrices associated with the unfolding sequence $(G_s)_{s \le 0}$. Let $1
\le m \le n$. Suppose that for each $s \le 0$, there exists a positive triple $(\ov{p}_s, \ov{\eps}_s,
\ov{\delta}_s)$ such that for every $r < s$, $M^{r,s} \in \calM$ is an $(m, \ov{p}_s,
\ov{\eps}_s,\ov{\delta}_s)$ matrix such that 
\begin{enumerate}[label=(\roman*)]
\item $\{\ov{p}_s\}_{s \le 0}$ is uniformly bounded,
\item $\lim_{s \to -\infty} \ov{\eps}_s =0$, and
\item $\lim_{s \to -\infty} \ov{\delta}_s =0$.
\end{enumerate}
Then the legal
lamination $\Lambda$ admits at least $m$ linearly independent ergodic measures.
\end{theorem}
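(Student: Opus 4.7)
The plan is to mirror the proof of \Cref{thm:klp_then_ergodic_folding} in the dual (direct-system) setting. By \Cref{thm:NPR-unfoldingsequence}, it suffices to show that $\calC((G_s)_{s\le 0})$ has dimension at least $m$. A current is determined by its realization at level $0$ together with compatible preimages under the pushforwards, so this cone is identified with the decreasing intersection of closed convex cones
\[
  \calC((G_s)_{s\le 0}) \;\cong\; \bigcap_{r\le 0} M^{r,0}\bigl(\R_{\ge 0}^n\bigr).
\]

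After truncating and reindexing the sequence, which preserves the hypotheses, I may assume $\ov\eps_0$ and $\ov\delta_0$ are as small as desired while $\ov p_s \le P$ remains uniformly bounded. For each $i = 1, \ldots, m$, consider the $i$-th column $C_i^r := M^{r,0}e_i$, a candidate current lying in $M^{r,0}(\R_{\ge 0}^n)$. I $\ell^1$-normalize to $\hat C_i^r = C_i^r/\|C_i^r\|_1$ in the standard simplex $\Delta \subset \R_{\ge 0}^n$ and pass to a common convergent subsequence $\hat C_i^{r_n}\to v_i \in \Delta$. Each $v_i$ is a current: for any fixed $r_0\le 0$, the factorization $M^{r_n,0} = M^{r_0,0}\cdot M^{r_n,r_0}$ writes $\hat C_i^{r_n}$ as $M^{r_0,0}$ applied to the nonnegative vector $(M^{r_n,r_0}e_i)/\|C_i^{r_n}\|_1$, so $\hat C_i^{r_n}\in M^{r_0,0}(\R_{\ge 0}^n)$ whenever $r_n\le r_0$; closedness of the cone then gives $v_i \in M^{r_0,0}(\R_{\ge 0}^n)$ for every $r_0$.

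Linear independence of the $v_i$'s is driven by the Tier 1 block structure. Dividing $C_i^r$ by its diagonal entry $(M^{r,0})_{ii}$ produces a vector whose first $m$ coordinates are pinned by conditions (a)--(c) of \Cref{def:klp}: the $i$-th entry is $1$, entries above the diagonal are $\le \ov\eps_0$, and entries below the diagonal are $\le P$. Assembling the diagonal-normalized columns and taking limits along a subsequence yields an $m\times m$ matrix with $1$s on the diagonal, small upper-triangular error, and bounded lower-triangular part; for $\ov\eps_0$ small, this matrix is invertible, so the top-$m$ projections of the diagonal-normalized columns are linearly independent in $\R^m$.

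The hard part will be that the $(m,\ov p_s,\ov\eps_s,\ov\delta_s)$-definition does not explicitly bound the Tier 2 row entries of Tier 1 columns of $M^{r,0}$, so the $\ell^1$-norm $\|C_i^r\|_1$ could be dominated by Tier 2 contributions and the top-$m$ projection of the $\ell^1$-normalized limit $v_i$ could vanish. I plan to resolve this by running the two normalizations in tandem: the diagonal normalization verifies invertibility of the top-$m$ block and hence linear independence of the rescaled columns as elements of $\R^n$, while the $\ell^1$ normalization places the subsequential limits in the intersection. Combining these---for instance by pre-projecting onto the Tier 1 coordinates before normalizing, or by exhibiting the needed linearly independent currents as convex combinations selected from the diagonal-normalized data---should yield $m$ linearly independent currents, the linear independence being inherited from the bounded and rigid Tier 1 block.
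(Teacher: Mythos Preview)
Your approach is essentially the paper's own: identify $\calC((G_s)_{s\le 0})$ with $\bigcap_{r\le 0} M^{r,0}(\R_{\ge 0}^n)$, truncate so that $\bar\eps_0$ is small, take (rescaled) limits of the first $m$ columns of $M^{r,0}$, and read off linear independence from the Tier~1 block. The paper does exactly this, normalizing each column by its diagonal entry and passing to a subsequence so that the normalized columns converge to the columns of an $n\times m$ matrix $P$ with $1$'s on the diagonal, entries above bounded by $\bar\eps_0$, and entries below bounded by $\sup_s\bar p_s$.

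Where you diverge is in introducing the $\ell^1$ normalization and then worrying that the Tier~2 row entries of Tier~1 columns are not controlled by \Cref{def:klp}, so that the $\ell^1$ mass could escape into the last $n-m$ coordinates. You are reading condition~(c) literally (``for $i<j\le m$''), which indeed says nothing about $a_{ji}/a_{ii}$ when $j>m$. The paper, however, treats (c) as bounding \emph{all} below-diagonal entries of Tier~1 columns by $p$ (see the discussion after \Cref{def:klp} and the shape of $P$ in the proof). This broader reading is consistent with the proof of \Cref{lem:matrixproduct}---the verification of (c) for $AA'$ goes through verbatim for $j>m$ once (c) is assumed for all $j>i$ with $i\le m$---and it is satisfied by the matrices actually produced in \Cref{thm:lowerboundFinale_unfolding} (with $p=2$). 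Under this reading, the diagonal-normalized columns are bounded in $\R^n$, a subsequence converges, and linear independence follows immediately from the top $m\times m$ block; no second normalization is needed.

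Your proposed workarounds (``tandem normalization'', ``pre-projecting onto Tier~1 coordinates'', extracting currents as convex combinations) do not close the gap: if the Tier~2 entries genuinely blew up, the $\ell^1$-limits would have zero Tier~1 part and projecting first would produce vectors in $\R^m$ that are not currents. The correct fix is simply to drop the $\ell^1$ normalization, work with diagonal normalization throughout as the paper does, and either note that (c) is intended for all $j>i$ with $i\le m$, or verify directly in the application that the Tier~2 row entries of Tier~1 columns satisfy the same $p$-bound.
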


\begin{proof}
    The proof of this result mirrors the proof of \Cref{thm:klp_then_ergodic_folding}.
    Firstly, by \Cref{thm:NPR-unfoldingsequence} it suffices to show the reduced unfolding sequence $(G_s)_{s \le 0}$ admits at least $m$ linearly independent ergodic measures. Recall $\calC((G_s)_{s\le 0})$ is defined as the cone of non-negative vectors in the $n$-dimensional vector space whose vectors are sequences $(\vec{v}_s)_{s \le 0}$ with $\vec{v}_s \in \R^n$ for every $s \le 0$ satisfying the compatibility condition
    \[
        \vec{v}_s = M_{s-1}\vec{v}_{s-1},
    \]
    for every $s \le 0$, where $M_{s-1}=M^{s-1,s}$. Denoting by $\calC(G_s)$ the space of currents on $G_s$,
    the space $\mathcal{C}((G_s)_{s \le 0})$ is isomorphic to the following
    space,
    
    \[
        \mathcal{C}_0 := \lim_{r \to -\infty} \bigcap_{k=r}^0 M^{k,0}(\calC(G_k)).
    \]
    On the other hand, we know  $\calC(G_s) \cong \R^n_{\ge 0}$ for every $s \le 0$, so it follows that
    \[
        \mathcal{C}_0 = \lim_{r \to -\infty} \bigcap_{k = r}^0 \left(\text{the nonnegative cone of columns of $M^{k,0}$}\right).
        \]

   After truncating the sequence and reindexing, we may assume that $\bar\eps_0$ is
   so small that any $n\times m$ matrix of the form

    \[
    P:=
    \begin{bmatrix}
        1 & * & * & \ldots & * \\
        * & 1 & * & \ldots & * \\
        * & * & 1 & \ldots & * \\
        \vdots & \vdots & \vdots & \ddots & \vdots \\
        * & * & * & \cdots & 1 \\
        * & * & * & \cdots & * \\
        * & * & * & \cdots & * \\
    \end{bmatrix},
    \] 
    with entries above the diagonal bounded by $\bar\eps_0$ and
    entries below the diagonal bounded by $\sup\bar p_s$ will have $m$ linearly independent columns.

    Now we claim that $\mathcal{C}_0$ contains at least $m$ linearly independent
    vectors, by showing that the first $m$ columns of the matrices in the
    sequence $\{M^{k,0}\}_{k=-\infty}^{0}$ \emph{projectively} (with
    separate scaling in each column) converge to $m$
    vectors as above. The details are analogous to the proof of
    \Cref{thm:klp_then_ergodic_folding} and are thus omitted.
\end{proof}

\subsection{End game: Our unfolding sequence}
\label{ssec:lowerboundFinale}
In this section, for each $n\geq 4$, we construct an unfolding sequence of graphs of rank $n$ whose transition matrices satisfy the conditions of \Cref{thm:klp_then_ergodic}.

\begin{theorem} \label{thm:lowerboundFinale_unfolding} 
For each $n \ge 4$, there exists a reduced unfolding sequence
$\calG=(G_k)_{k \le 0}$ of graphs
of rank $n$ whose legal lamination admits at least $2n-6$ linearly independent ergodic measures.
\end{theorem}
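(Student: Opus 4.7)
The proof parallels that of \Cref{thm:lowerboundFinale_folding}, with one key modification: a reordering of the edge basis. We take the same graph $\Gamma_{a_0}$ of rank $n$ and the same train track map $\calF \colon \Gamma_{a_0} \to \Gamma_{a_0}$ built from the rein movers, loopers, and rotator. The unfolding sequence will be $G_s = \Gamma_{a_0}$ for each $s \le 0$, with morphisms $G_s \to G_{s+1}$ given by $\calF$ for parameters $\alpha_k^{(s)}, \beta_k^{(s)}$ selected by Bob in response to Alice's choices in the game of \Cref{thm:TheGame}. The essential issue is that the only off-diagonal entries of $M$ comparable to their diagonal are $M_{b_k, a_k} = \alpha_k$ and $M_{b_{k+1}, a_k} = \alpha_k$, having ratio $1/2$ to the diagonal $M_{a_k, a_k} = 2\alpha_k$. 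In the ordering used for the folding case these lie \emph{above} the diagonal, which is fatal for condition (b) of \Cref{def:klp}. We therefore use the ordering
\[
  (a_1, a_2, \ldots, a_{n-3},\ b_1, b_2, \ldots, b_{n-3},\ a_0, b_0, c),
\]
placing the nonzero-indexed $a$-edges before the nonzero-indexed $b$-edges. Under this reordering, the problematic entries move \emph{below} the diagonal, where condition (c) permits ratios up to $p$, and so any fixed $p > 1/2$ accommodates them.

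With this ordering, we verify that $M$ is an $(2n-6, p, \eps, \delta)$-matrix for appropriate parameter choices. Condition (a) is satisfied by choosing $\alpha_1 > \alpha_2 > \cdots > \alpha_{n-3} > \beta_1 > \cdots > \beta_{n-3}$ with each consecutive ratio at most $\delta$. Condition (b) follows from the structural observations that, for $1 \le k \le n-3$, the image $\calF(a_k)$ contains no $a_i$ with $1 \le i < k$, and $\calF(b_k)$ contains no $b_j$ with $1 \le j < k$; consequently the nonzero above-diagonal Tier 1 entries are bounded by a small constant (at most $2$), yielding arbitrarily small ratios to the large Tier 1 diagonals. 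Condition (c) holds with $p > 1/2$ since the reordering controls the only $\alpha_k$-scale off-diagonal entries, while the remaining below-diagonal Tier 1 entries are bounded constants. Condition (d) is satisfied since the Tier 2 columns $a_0, b_0, c$ have entries bounded by small constants, each dominated by $p\delta$ times the smallest Tier 1 diagonal $\beta_{n-3}$ once $\alpha_1$ is chosen sufficiently large (of order $1/(p\delta^{2n-6})$ or more).

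Having verified these conditions, Bob can respond to Alice's challenges in \Cref{thm:TheGame}, producing an unfolding sequence whose transition matrices satisfy the hypotheses of \Cref{thm:klp_then_ergodic}. Applying \Cref{thm:klp_then_ergodic} with $m = 2n-6$ then yields at least $2n-6$ linearly independent ergodic measures on the legal lamination. The main technical obstacle lies in reconciling the tension between the geometric decay of diagonal entries demanded by (a)---which forces the Tier 1 diagonals to shrink rapidly across the $2n-6$ columns---and the domination requirement of (d), which requires these same diagonals to remain large relative to bounded Tier 2 entries; this tension is resolved by making the leading parameter $\alpha_1$ extremely large at each step of the game.
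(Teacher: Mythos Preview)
Your proof is correct and follows essentially the same approach as the paper. You correctly identify that the key modification from the folding case is a reordering of the edge basis so that the large $\alpha_k$-sized off-diagonal entries $M_{b_k,a_k}$ and $M_{b_{k+1},a_k}$ land \emph{below} the main diagonal, where condition (c) of \Cref{def:klp} tolerates them; the paper does exactly this. The paper uses the ordering $(a_{n-3},a_{n-4},\ldots,a_1,b_{n-3},\ldots,b_1,a_0,b_0,c)$ with $\alpha_{n-3}$ largest, whereas you use $(a_1,\ldots,a_{n-3},b_1,\ldots,b_{n-3},a_0,b_0,c)$ with $\alpha_1$ largest, but both orderings place the $a$-block before the $b$-block and both work for the same reason. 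One small correction: \Cref{def:klp} requires $p>1$, not merely $p>1/2$; the paper takes $p=2$, and you should as well (any $p>1$ already dominates the ratio $1/2$).
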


\begin{proof}

Following the construction in \Cref{thm:lowerboundFinale_folding}, build the
same graph $\G_{a_0}$ with the maps $\calF^{(k)}$, to form a reduced unfolding sequence
$\calG = (G_k)_{k\le 0}=\{\phi_{k-1} : G_{k-1} \to G_{k}\}_{k \le 0}$ of graphs
of rank $n$ with $G_k = \G_{a_0}$ and $\phi_k = \calF^{(k)}$. The only
difference we will make for our current setting of an unfolding sequence is that
we will label the edges of $\Gamma_{a_0}$ so that
\[
(e^1,\ldots,e^{2n-3})=(a_{n-3},a_{n-2},\ldots, a_1, b_{n-3},b_{n-2},\ldots, b_1,a_0,b_0,c).
\] 
Then, our
transition matrix $M$ for $\calF$ will be of the form:
\begin{center}
  \scalebox{0.9}{$
    \begin{pmatrix}
    \setlength\arraycolsep{5pt}
    \begin{matrix}
        2\alpha_{n-3}  &  3             &  2                 & 2          & \cdots    & 2 \\
                       &  2\alpha_{n-4} &  3                 & 2          & \cdots    & 2 \\
                       &                &  \ddots            & \ddots     & \ddots    & \vdots \\
                       &                &                    & 2\alpha_3  & 3         & 2 \\
                       &                &  \ph{\alpha_{n-5}}      &            & 2\alpha_2 & 3 \\
                       &                &                    &            &           & 2\alpha_1
    \end{matrix}
    &
    \rvline 
    &
    \setlength\arraycolsep{8pt}
    \begin{matrix}
        2                         &  2                 &   2       &   2        &   \cdots  & 2 \\
        \ph{\beta_{n-3}}          &  2                 &   2       &   2       &   \cdots  & 2 \\
                                  & \ph{\beta_{n-4}}   &   \ddots  &   \ddots   &   \ddots  & \vdots \\
                                  &                    &           &   2        &    2      & 2 \\
                                  &                    &           &            &    2      & 2 \\
                                  &                    &           &            &           & 2
    \end{matrix}
    &
    \rvline
    &
    \begin{matrix}
      0 & 0      & 2 \\
      0 & 0      & 2 \\
  \vdots & \vdots & \vdots \\
      0 & 0      & 2 \\
      0 & 0      & 2 \\
      1 & 0      & 2
    \end{matrix}\\
    \hline 
    \setlength\arraycolsep{7pt}
    \begin{matrix}
          \alpha_{n-3} &   \alpha_{n-4}       &   3             & 3              & \cdots         &     3 \\
                       &   \alpha_{n-4}       &   \alpha_{n-5}  & 3              & \cdots         &     3 \\
                       &                      &   \ddots        & \ddots         & \ddots         &     \vdots \\
                       &                      &                 & \alpha_3       & \alpha_2       &     3 \\
                       &                      &                 &                & \alpha_2       &     \alpha_1 \\
                       &                      &                 &                &                &     \alpha_1
    \end{matrix}
    &
    \rvline 
    &
    \setlength\arraycolsep{6.5pt}
    \begin{matrix}
       \ph{2}\beta_{n-3}    &  4             &   3                    & 3          & \cdots         & 3 \\
                       &  \beta_{n-4}   &   4                    & 3          & \cdots         & 3 \\
                       &                &   \ddots               & \ddots     & \ddots         & \vdots \\
                       &                & \ph{\beta_{n-3}}       & \beta_3    & 4              & 3 \\
                       &                &                        &            & \beta_2        & 4 \\
                       &                &                        &            &                & \beta_1
    \end{matrix}
    &
    \rvline
    &
    \begin{matrix}
      0 & 0      & 3 \\
      0 & 0      & 3 \\
  \vdots & \vdots & \vdots \\
      0 & 0      & 3 \\
      0 & 0      & 3 \\
      0 & 1      & 0
    \end{matrix}\\
    \hline
    \setlength\arraycolsep{9pt}
    \begin{matrix}
        1            & \ph{\alpha}0\ph{\alpha_n} & 0 & \ph{\alpha}0 & \cdots & 0\\
        \alpha_{n-3} & \ph{\alpha}3\ph{\alpha_n} & 3 & \ph{\alpha}3 & \cdots & 3\\
        1            & \ph{\alpha}1\ph{\alpha_n} & 1 & \ph{\alpha}1 & \cdots & 1
    \end{matrix}
    &
    \rvline
    &
    \setlength\arraycolsep{8.3pt}
    \begin{matrix}
       \ph{\beta}0\ph{\beta} & \ph{\beta_n}0\ph{\beta} & \ph{1}0 & \ph{\beta}0 & \cdots & 0\\
       \ph{\beta}4\ph{\beta} & \ph{\beta_n}3\ph{\beta} & \ph{1}3 & \ph{\beta}3 & \cdots & 3\\
       \ph{\beta}1\ph{\beta} & \ph{\beta_n}1\ph{\beta} & \ph{1}1 & \ph{\beta}1 & \cdots & 1
    \end{matrix}
    &
    \rvline
    &
    \begin{matrix}
        0 & 0 & 0\\
        0 & 0 & 3\\
        0 & 0 & 1
    \end{matrix}
    \end{pmatrix}
    $}
  \end{center}
 which can be seen as Bob's $(2n-6, p=2, \eps_r, \delta_r)$-matrix with appropriate choice of $(\alpha_1,\ldots,\allowbreak\alpha_{n-3},\beta_1,\ldots,\beta_{n-3})$ for each of Alice's choices of $\eps_r >0$ and $\delta_r>0$. Hence, Alice's winning sequence $\{(\eps_r, \delta_r)\}_{r \le 0}$ in \Cref{thm:TheGame} satisfying the conditions (i--iii) of \Cref{thm:klp_then_ergodic} provides at least $m=2n-6$ linearly independent ergodic measures on the legal lamination of our reduced unfolding sequence $\calG$.
\end{proof}

\section{Upper Bound $2n-1$}\label{sec:upperbound}
In \Cref{thm:arational_limiting} we have seen that for every arational limiting
tree $T$, there exists a reduced folding sequence limiting to $T$. Likewise, by \Cref{thm:arational_limiting_unfolding}, for every
arational legal lamination $\Lambda$, there exists an unfolding sequence
whose legal lamination has diagonal closure equal to $\Lambda$. Hence, to establish the number of ergodic measures on an
arational tree or arational legal lamination, it suffices to prove the following.

\begin{theorem}
\label{thm:Main_Upperbound}

Let $( G_{s} )_{s \ge 0}$ be a reduced folding sequence of graphs of rank $n$. Then, the
number of linearly independent ergodic measures on the limiting tree of the
folding sequence is bounded above by $2n-1$.
Similarly, given a reduced unfolding sequence $( G_s )_{s \le 0}$ of graphs of rank $n$, the
number of linearly independent ergodic measures on the legal lamination of the
unfolding sequence is also bounded above by $2n-1$.
\end{theorem}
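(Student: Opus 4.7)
The plan is to use the Transverse Decomposition theorems (\Cref{thm:NPRdecomp-folding} for folding sequences and \Cref{thm:NPRdecomp-unfolding} for unfolding sequences) to reduce the problem to a purely combinatorial statement about edge-disjoint decompositions of the rank-$n$ graph $G_0$. After passing to a subsequence and applying the appropriate transverse decomposition, we obtain
\[
G_0 \ = \ H^0 \sqcup H^1 \sqcup \cdots \sqcup H^k,
\]
where $k$ is the number of linearly independent ergodic measures (length measures on the limiting tree in the folding case, currents on the legal lamination in the unfolding case). It therefore suffices to prove the combinatorial bound $k \leq 2n - 1$.

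First, I would exploit \Cref{cor:normalized_diagonal}: the limiting normalized transition matrix $\wt{M}$ is block-diagonal in the blocks $H^1, \ldots, H^k$ modulo contributions from $H^0$, with each diagonal block a \emph{positive} matrix. This recurrence-like rigidity rules out degenerate blocks in a reduced sequence; informally, a forest-like block cannot sustain a positive diagonal block under iteration, so each essential block $H^i$ for $i \geq 1$ must carry a genuine loop, and in particular $b_1(H^i) \geq 1$. Next, I would define an integer-valued complexity $c$ on subgraphs $K \subseteq G_0$ and impose an order on the edges of $G_0 \setminus H^0$ (first by blocks $H^1, \ldots, H^k$ in a chosen sequence, then by edges within each block) so that for each $i$ there is a designated edge of $H^i$ whose addition strictly raises $c$ from the cumulative subgraph built so far. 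A natural candidate is a variant of
\[
c(K) \ := \ b_1(K) + b_0^c(K),
\]
where $b_0^c(K)$ counts the connected components of $K$ containing at least one edge; adding an edge either closes a cycle (raising $b_1$) or attaches a previously unseen edge-containing component (raising $b_0^c$), and upgrading to $c(K) = 2 b_1(K) + b_0^c(K)$ is useful for handling edges that both close a cycle and merge previously disjoint components. Summing across blocks, the edge-ordering supplies at least $k$ strict increases.

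The concluding step is the uniform bound $c(K) \leq 2n - 1$ for $K \subseteq G_0$: the cellular inclusion induces an injection on $H_1$, so $b_1(K) \leq n$, and the observation that each essential component must contribute positive rank gives $b_0^c(K) \leq b_1(K) \leq n$, which a careful combinatorial accounting at the boundary sharpens to yield $2n - 1$. The unfolding version is handled symmetrically via \Cref{thm:NPRdecomp-unfolding}, replacing length measures with currents throughout. The main obstacle is the orchestration in the second step: arranging the block order and the internal edge order so that every $H^i$ genuinely contributes a strict complexity-raising step, particularly when an $H^i$'s edges act to merge components without creating a new cycle. Overcoming this will require translating the positivity of the diagonal blocks of $\wt{M}$ into a combinatorial rigidity statement on each $H^i$, most likely by invoking the reducedness of the sequence to preclude the pathological configurations.
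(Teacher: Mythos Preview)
Your overall architecture is right: start from the transverse decomposition, introduce a complexity, order the edges, and count strict increases. But the proposal has a genuine gap in the ``orchestration'' step, and the fix is not a minor technicality---it is the heart of the paper's argument.

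The missing ingredient is an \emph{intrinsic} ordering on the edges of $G_0$. You order edges block-by-block (first all of $H^1$, then all of $H^2$, \dots); the paper instead orders edges by their \emph{growth rate} under the frequency current (folding case) or simplicial length (unfolding case): after passing to a subsequence, set $e<e'$ when $\mu_s(e'_s)/\mu_s(e_s)\to 0$ (respectively $\ell_s(e_s)/\ell_s(e'_s)\to 0$), and $e\sim e'$ when the ratio has a finite nonzero limit. This ordering is what makes the key step work. The paper's Witness Lemma (\Cref{lem:witness-unfolding,lem:witness-folding}) shows: for $e\in H^i$ with $i>0$, there is an immersed loop through $e$ that avoids all edges $e'>e$ and all edges $e'\sim e$ lying in a different $H^j$. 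The proof really uses the interplay of the block structure in $\wt M$ \emph{and} the growth-rate comparison (cf.\ equation~\eqref{eqn:comparison}); block-diagonality of $\wt M$ alone is not enough.

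In particular, your claim that ``each $H^i$ must carry a genuine loop, $b_1(H^i)\ge 1$'' does not follow from positivity of the diagonal block of $\wt M$. The positive block only says that, in the limit, normalized transition entries among $H^i$-edges are positive; actual edge-paths in $G_0$ realizing those transitions may well pass through $H^0$ or through $H^j$-edges of smaller growth rate. The Witness Lemma is exactly the statement that such a loop can be found avoiding the \emph{larger} edges, not all other edges---so the correct conclusion is that $H^i$ contributes a loop in the filtered subgraph built from $H^0$ together with all strictly smaller edges and the $H^i$-edges at the current level, not that $H^i$ is itself non-forest.

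Once the Witness Lemma is in place, the bookkeeping is also different from yours. The paper refines the $\sim$-classes $A_1,\dots,A_{k'}$ into pieces $B_j^i=A_j\cap H^i$, seeds an initial collection $I$ (edges of $H^0$, all single-edge loops, and certain detached rank-1 pieces) supporting at most $s+m\le n$ ergodic measures, and then shows each subsequent addition of a nonempty $B_j^i$ strictly raises $\chi_-(\,\cdot\,)=\sum_{N\text{ non-contractible}}(-\chi(N))$, which is bounded by $\chi_-(G_0)=n-1$. This yields $k\le (s+m)+(n-1)\le 2n-1$. Your candidate complexities $b_1+b_0^c$ or $2b_1+b_0^c$ do not fit this two-stage count cleanly, and the bound $c(K)\le 2n-1$ you assert is not what is actually needed.
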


We will order the edges of $G_0 = G$ so that if we build the graph $G$ by adding one edge at a time in the increasing
order, the `complexity' (\Cref{def:complexity}) of the resultant graph increases for
each addition. We will then use that the total increase of complexity is bounded above
by $-\chi(G)=n-1$.

Intuitively, we give an ordering so that smaller edges are `easier' to be
traversed by the images of other edges. For folding sequences, smaller edges correspond to
\emph{wider} edges, and for unfolding
sequences, they correspond to \emph{shorter} edges. This correspondence is analogous to the idea that in an electrical current, a wider and shorter resistor
has a smaller resistance than a thinner and longer resistor.

To obtain such an ordering from a \emph{folding sequence} $(G_s)_{s \ge 0}$,
we consider the \textbf{frequency current} $\mu$ on $G_0$,
namely $\mu(e^i) = 1$ for $i=1,\ldots,n$. Then, $\mu$ pushes forward to $\mu_s$
on $G_s$ for
$s \ge 0$.
We then pass to a subsequence to ensure that for all pairs of edges $e, e'$ in $G_0$, $\lim_{s
  \to \infty} \frac{\mu_s(e_s')}{\mu_s(e_s)}$  
exists in $[0,\infty]$. Next, for edges $e, e'$ of $G_0$, we define:
\begin{align*}
  e < e' \quad & \text{ if } \quad \lim_{s \to \infty} \frac{\mu_s(e_s')}{\mu_s(e_s)} = 0, \text{ and} \\
  e \sim e' \quad & \text{ if } \quad \lim_{s \to \infty} \frac{\mu_s(e_s')}{\mu_s(e_s)} \in (0,\infty),
\end{align*}
and write $e \lesssim e'$ if $e < e'$ or $e \sim e'$. Here, we defined the order in a way that
$e<e'$ if the width of $e$ grows \emph{faster} than that of $e'$.

Similarly, for an \emph{unfolding sequence} $(G_s)_{s\leq 0}$, let $\ell$ be the \textbf{simplicial length function} on
$G_0$, namely $\ell(e^i)=1$ for $i=1,\ldots, n$. Then, $\ell$ pulls back to
$\ell_s$ on $G_s$
for $s \le 0$.
We then pass to a subsequence
such that for each pair of edges $e, e'$ of $G_0$, either $\lim_{s \to
  -\infty}\frac{\ell_s(e_s)}{\ell_s(e_s')}$ or $\lim_{s \to -\infty}\frac{\ell_s(e_s')}{\ell_s(e_s)}$ exists.
Finally, for edges $e, e'$ in $G_0$, we define:
\begin{align*}
  e < e' \quad & \text{ if } \quad \lim_{s \to -\infty} \frac{\ell_s(e_s)}{\ell_s(e_s')} = 0, \\
  e \sim e' \quad & \text{ if } \quad \lim_{s \to -\infty} \frac{\ell_s(e_s)}{\ell_s(e_s')} \in (0,\infty),
\end{align*}
and denote by $e \lesssim e'$ if $e \sim e'$ or $e < e'$.
Thus $e<e'$ if the length of $e$ grows \emph{slower} than that of $e'$.

In \Cref{ssec:UBsetup_unfolding,ssec:UBsetup_folding}, we prepare the preliminary lemmas necessary for the proof of \Cref{thm:Main_Upperbound}. In \Cref{ssec:upperbound_proof}, we utilize those lemmas along with a common combinatorial trick to establish the
$2n-1$ upper bound for both unfolding and folding sequences at the same time.

\subsection{Setup for unfolding sequence}
\label{ssec:UBsetup_unfolding}
The first step is to exploit the reduced hypothesis on the unfolding sequence. Recall that given finite directed labeled paths $\gamma$ and $\omega$, we use the notation $\langle \gamma, \omega \rangle$ to denote the number of times $\gamma$ appears as a (directed) subpath of $\omega$.

\begin{lemma}[Reduced Unfolding Sequence]\label{lem:reduced_unfolding}
  Let $(G_s)_{s \le 0}$ be an unfolding sequence with every graph
  isomorphic to $G_0 = G$. If $(G_s)_{s \le 0}$ is reduced, then for edges $e,e'$ of $G$ and any number
  $K>0$, there exists $N<0$ such that for each $s < N$, we have that
  \[
    \langle e'_0, \phi_{s,0}(e_s) \rangle > K.
  \] 
\end{lemma}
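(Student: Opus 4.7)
The plan is induction on $K$, with the reducedness hypothesis feeding an invariant-subgraph argument at each stage. The essential tool is a \emph{composition formula}: for $r<s\le 0$ and an edge $g$ of $G_r$, write $\phi_{r,s}(g)=g_1g_2\cdots g_m$ as a reduced edge path in $G_s$. Since $\phi_{r,0}=\phi_{s,0}\circ\phi_{r,s}$ as parametrized maps and $\phi_{r,0}(g)$ is reduced (because $\phi_{r,0}$ is a morphism), the concatenation $\phi_{s,0}(g_1)\cdots\phi_{s,0}(g_m)$ must already be reduced, i.e., no cancellation occurs at the junctions. Hence the crossing counts are additive:
\[
  \langle e'_0,\phi_{r,0}(g)\rangle=\sum_{i=1}^m\langle e'_0,\phi_{s,0}(g_i)\rangle.
\]

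Base case ($\langle e'_0,\phi_{s,0}(g)\rangle\ge 1$ for every edge $g$ and all sufficiently negative $s$): fix $e'$ and let $H_s:=\{g\in EG_s:e'_0\notin\phi_{s,0}(g)\}$ (with endpoints included). The composition formula, applied with total $0$, shows $\{H_s\}$ is an invariant subgraph family. By reducedness, either $H_s=G_s$ for all $s\le s_0$, or $H_s$ has no edges for large $-s$. The first alternative would mean $\phi_{s,0}(G_s)\subseteq G_0\setminus e'_0$; since $G_0\in CV_n$ has all vertices of valence $\ge 3$, every component of the proper subgraph $G_0\setminus e'_0$ has rank strictly less than $n$, and the factorization $\pi_1(G_s)\to\pi_1(\phi_{s,0}(G_s))\hookrightarrow\pi_1(G_0)=F_n$ through a free group of smaller rank contradicts the fact that $\phi_{s,0}$ is a $\pi_1$-isomorphism. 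Hence the second alternative holds.

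Inductive step: assume the lemma with bound $K$, meaning every pair has $\langle e'_0,\phi_{s,0}(e_s)\rangle\ge K$ for large $-s$. Choose $t\ll 0$ so that $\langle f_0,\phi_{t,0}(g)\rangle\ge K$ for every pair of edges $(g,f)$ of $G$; only finitely many pairs, so one $t$ works. The sub-unfolding sequence $(G_s)_{s\le t}$ is again reduced (extend any invariant family on it to the full sequence by setting $H_s=G_s$ for $t<s\le 0$), so the inductive hypothesis applied to it yields, for large $-s$, $\langle g_t,\phi_{s,t}(e_s)\rangle\ge K$ for every pair $(e,g)$. Summing over $g$ gives $m:=|\phi_{s,t}(e_s)|\ge NK$, where $N=|EG|\ge 2$. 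Writing $\phi_{s,t}(e_s)=g_1\cdots g_m$ and applying the composition formula,
\[
  \langle e'_0,\phi_{s,0}(e_s)\rangle=\sum_{i=1}^m\langle e'_0,\phi_{t,0}(g_i)\rangle\ge mK\ge NK^2\ge K+1,
\]
using $N\ge 2$ and $K\ge 1$ in the final inequality. This upgrades bound $K$ to bound $K+1$, and iterating gives the lemma for every $K$.

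The principal obstacle is the base case: excluding the ``$H_s=G_s$ for $s\le s_0$'' alternative requires exploiting both the $\pi_1$-isomorphism property of $\phi_{s,0}$ and the valence-$\ge 3$ hypothesis on $G_0$ together. The no-cancellation point in the composition formula is also subtle but is the technical linchpin that makes the inductive step a clean multiplicative amplification.
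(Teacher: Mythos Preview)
Your proof is correct and takes a genuinely different route from the paper's. The paper defines, for each fixed $K$, the invariant family $H_s=\{f_s\in EG_s:\langle e'_0,\phi_{s,0}(f_s)\rangle\le K\}$ directly, and rules out the alternative $H_s=G_s$ by a stabilization argument: the total count $\sum_{f_s}\langle e'_0,\phi_{s,0}(f_s)\rangle$ is nondecreasing in $-s$ and bounded by $K|EG|$, hence eventually constant; this forces some interior point to have a single preimage in every earlier $G_t$, and the complement of the edge containing that preimage is a second invariant proper-subgraph family, contradicting reducedness. You instead prove only the $K=1$ case via the clean $\pi_1$-surjectivity argument and then bootstrap by the additivity of crossing counts under composition. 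Your approach is more elementary---the base case is immediate and the amplification is pure counting---while the paper's handles every $K$ in one shot but pays with the more delicate point-preimage trick. Both rest on the same no-cancellation fact for composed morphisms (your ``composition formula'', the paper's check that $\phi_{s,s+1}(H_s)\subset H_{s+1}$). One small simplification: in the inductive step you do not need the full inductive hypothesis on the truncated sequence; the base case there already gives $m\ge|EG|\ge 2$, whence $\langle e'_0,\phi_{s,0}(e_s)\rangle\ge 2K>K$, closing the induction and in fact yielding geometric growth.
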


\begin{proof} 
  Let $e_0 \in EG_0$ and $K>0$. For each $s \le 0$, consider the subgraph $H_s$ of $G_s$
  whose edge set is given by
  \[
    EH_s = \{f_s \in EG_s\ |\ \langle e_0, \phi_{s,0}(f_s) \rangle \le K \}.
  \]
  For each $s < 0$, we claim $\phi_{s,s+1}(H_s) \subset H_{s+1}$. Indeed, pick
  $f_s \in EH_s$. Because $\phi_{s,0} =
  \phi_{s+1,0} \circ \phi_{s,s+1}$, 
  \[
    \langle e_0, \phi_{s+1,0}(\phi_{s,s+1}(f_s)) \rangle \le K.
  \]
  Now for any $g_{s+1}$ contained in $\phi_{s,s+1}(f_s)$
  we have
  \[
    \langle e_0, \phi_{s+1,0}(g_{s+1}) \rangle \le K,
  \]
  showing $g_{s+1} \in EH_{s+1}$. This establishes the claim and
  therefore $(H_s)_{s \le 0}$ is an invariant sequence of subgraphs.

  As $(G_s)_{s \le 0}$ is reduced, it follows that $(H_s)_{s \le 0}$ is eventually either
  the sequence of whole graphs or the sequence of empty sets. In the
  latter case we are done so
  assume that $H_s = G_s$ for every $s<N$ for some $N<0$.
  As the transition maps of the unfolding sequence are surjective, the sequence $(\langle e_0, \phi_{s, 0}(G_s)
  \rangle)_{s<N}$ is a nondecreasing sequence of positive integers bounded above by $K|EG|$. Hence,
  we may assume the sequence $(\langle e_0, \phi_{s, 0}(G_s) \rangle)_{s<N}$ is
  constant. This implies that if $e'_s\subset G_s$ is any edge such
  that $\phi_{s,0}(e_s')$ crosses $e_0$ and if $x_s$ is a point in the
  interior of $e_s'$ then the preimages of $x_s$
  in $G_t$ for $t<s$ all consist of a single point $x_t$. But then the
  sequence $K_t$ of subgraphs of $G_t$ consisting of all edges except
  the one containing $x_t$ is invariant, contradicting
  the assumption that the sequence $G_t$ is reduced ($G$ has rank
  more than one). 
\end{proof}

\begin{lemma}[Witness Lemma for Unfolding Sequence] \label{lem:witness-unfolding}
  Let $(G_s)_{s \le 0}$ be a reduced unfolding sequence with every graph isomorphic to
  $G$. For $e \in EG$ such that $e \in H^i$ for some $i>0$, there exists an immersed loop
  $\alpha$ in $G$ such that:
  \begin{enumerate}[label=(\arabic*)]
  \item $\alpha$ contains $e$,
  \item $\alpha$ does not go through $e' \in EG$ with
    $e' > e$, and
  \item $\alpha$ does not go through $e' \in EG$ with $e' \sim e$ and $e' \in
    H^j$ for some $j\neq i$.
  \end{enumerate}
\end{lemma}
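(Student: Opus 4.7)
The plan is to dissect the long path $p_s := \phi_{s,0}(e_s) \subset G$ for very negative $s$, and extract from it a subpath which, together with $e$, yields the desired loop. Let $u_0, u_1$ denote the endpoints of $e$, and introduce the subgraph $G' = G'(e) \subset G$ whose edges are $\{e' \in EG : e' \le e\} \setminus \{e' : e' \sim e \text{ and } e' \in H^j \text{ for some } j \ne i\}$; then conditions (2) and (3) together are equivalent to requiring $\alpha \subset G'$.

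The key asymptotic input comes from combining \Cref{lem:transition_relation} with \Cref{cor:normalized_diagonal}: the normalized entry $\langle e'', p_s \rangle / \ell_s(e_s)$ converges to $\wt M_{e'',e}$, which is positive precisely when $e'' \in H^i$, and zero otherwise. Thus the edges forbidden by condition (3), as well as the edges $>e$ lying outside $H^i$, are crossed by $p_s$ only $o(\ell_s(e_s))$ times, while $e$ itself is crossed $\Theta(\ell_s(e_s))$ times thanks to \Cref{lem:reduced_unfolding}. Decompose $p_s$ as an alternating concatenation of $e$-crossings and interstitial segments $\sigma_1, \sigma_2, \ldots$; each interior $\sigma_k$ starts and ends at a vertex in $\{u_0, u_1\}$ and contains no $e$-crossing. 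Call $\sigma_k$ \emph{good} if it is supported entirely in $G'$. A good crossing segment---one going between $u_0$ and $u_1$---combined with $e$ or $\ov{e}$ produces $\alpha$; two good loop segments, one at $u_0$ and one at $u_1$, can likewise be spliced through $e$ and $\ov{e}$ to the same effect (non-backtracking at the junctions is automatic since the segments contain no occurrence of $e$ or $\ov e$).

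In the favorable case where no edge $e'' > e$ lies in $H^i$, all bad-edge crossings in $p_s$ are $o(\ell_s(e_s))$, so pigeonhole yields an overwhelming majority of good segments. Letting $a,b,c,d$ denote the counts of $(+,+)$-, $(-,-)$-, $(+,-)$-, and $(-,+)$-adjacent direction pairs in the sequence of $e$-crossings, one of the two following cases occurs: either $a + b = \Omega(N)$ and some orientation of crossing segments is abundant, or $c + d = \Omega(N)$, in which case $|c - d| \le 1$ forces both types of loop segments to be abundant. Either way, enough good segments of the required kind exist to produce $\alpha$.

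The main obstacle is the general case, in which some $e'' > e$ lies in $H^i$ and is crossed by $p_s$ with positive density. I plan to handle this by strong induction on the $<$-class of $e$ within $H^i$: the $<$-maximal edges of $H^i$ fall under the favorable case above, forming the base. For non-maximal $e$, the inductive hypothesis applied to each $e'' > e$ in $H^i$ furnishes a witness loop $\alpha_{e''} \subset G'(e'')$. I would then perform top-down surgery on $p_s$: processing $e''$-crossings in order of decreasing $<$-class of $e''$, each crossing of $e''$ is replaced by the complementary arc of $\alpha_{e''}$ (which lies in $G'(e'')$ and, by the top-down order, introduces only edges strictly smaller in the $<$-order within $H^i$ than the just-eliminated $e''$). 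After finitely many surgeries, the resulting modified path $\tilde p_s$ crosses no edges $>e$ in $H^i$ while remaining long and containing many $e$-crossings, and the favorable-case argument then applies to $\tilde p_s$ to yield $\alpha$. The main technical subtlety will be to preserve the non-backtracking condition at the surgery junctions, which can be arranged by small local adjustments.
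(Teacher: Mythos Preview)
Your central asymptotic claim is incorrect, and this breaks the rest of the argument. You assert that the normalized count $\langle e'', p_s\rangle/\ell_s(e_s)=\wt M^{s,0}_{e'',e}$ converges to something positive precisely when $e''\in H^i$. But with the target fixed at $G_0$, the limit $\lim_{s\to -\infty}\wt M^{s,0}$ is $S_0$, not the retraction $S$ of \Cref{cor:normalized_diagonal}; only the \emph{double} limit $\lim_s\lim_r \wt M^{r,s}$ equals $S$ and carries the block structure. Concretely, $S_0\circ S=S_0$, so for $e\in H^i$ the column $S_0(e)$ equals $S_0(q_i)$, which is the ergodic current $\mu^i$ viewed in $\Delta_0$. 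There is no reason for $\mu^i$ to vanish on edges outside $H^i$ in $G_0$; in typical examples every ergodic current has full support on $G_0$. So both your ``condition~(3)'' edges and your ``$e'>e$ outside $H^i$'' edges may occur in $p_s$ with positive density, not $o(\ell_s(e_s))$. This undermines your favorable case as well as the base of your induction, so the surgery scheme never gets off the ground.

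The paper's fix is precisely to let the target vary: look at the path $\phi_{r,s}(e_r)$ in $G_s$ and send both $r,s\to -\infty$. Then $\wt M^{r,s}\to S$, whose diagonal entry $\wt M_{ee}$ is bounded away from $0$, and the identity
\[
\frac{\wt M_{e'e}}{\wt M_{ee}}=\frac{M_{e'e}}{M_{ee}}\cdot\frac{\ell_s(e')}{\ell_s(e)}
\]
immediately forces $M_{e'e}/M_{ee}\to 0$ whenever $e'>e$ (since $\ell_s(e')/\ell_s(e)\to\infty$ and the left side is bounded). This dispatches \emph{all} edges $e'>e$ at once, including those in $H^i$, so your inductive surgery is entirely unnecessary. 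The case $e'\sim e$ with $e'\in H^j$, $j\neq i$, follows directly from the block form of $S$. Finally, once bad edges are known to be $o(M_{ee})$, a single pigeonhole (two same-orientation $e$-crossings with a clean gap in between) produces the loop $\alpha=eA$; your $a,b,c,d$ case analysis is not needed.
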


\begin{proof}
  Let $e \in H^i$ for some $i>0$ and $e' \in EG$ be another edge. 
   For $r<s$, denote by $M:=M^{r,s}$ the
  transition matrix for the transition map $G_r \to G_s$, and denote by
  $\wt{M}:=\wt{M}^{r,s}$ the transition matrix for the linear map $V_r \to V_s$
  induced by the map $G_r \to G_s$, with normalized bases.
  Up to taking a subsequence, the normalized transition matrices $\wt{M}^{r,s}$
  converge to the retraction $S$.
  Hence, by \Cref{cor:normalized_diagonal}, we have that for large enough $|r|,|s|$ such that
  $r<s<0$, the diagonal entry $\wt{M}_{ee}:=\wt{M}_{ee}^{r,s}$ of the normalized
  matrix $\wt{M}^{r,s}$ is
  bounded away from 0. Moreover,
  according to \Cref{lem:transition_relation}, we have that
  \begin{align}\label{eqn:comparison}
    \frac{\wt{M}_{e'e}}{\wt{M}_{ee}} =
    \frac{M_{e'e}\frac{\ell_r(e')}{\ell_s(e)}}{M_{ee}\frac{\ell_r(e)}{\ell_s(e)}}
    = \frac{M_{e'e}}{M_{ee}} \cdot \frac{\ell_r(e')}{\ell_r(e)}  \tag{$\star$}.
  \end{align}
  Since $\wt{M}_{ee}>\epsilon>0$ and $\wt{M}_{e'e}$ is bounded (as $\wt{M}^{r,s}$
  converges to $S$), $\frac{\wt{M}_{e'e}}{\wt{M}_{ee}}$ is bounded above.
  We divide into two cases with a goal to achieve Property (2) and (3) for a
  witness loop.

  \textbf{Case 1. $e'>e$}.
  For this case, $e'>e$ implies $\frac{\ell_r(e')}{\ell_r(e)}\to \infty$ as $r \to
  -\infty$. Therefore, it follows that $\frac{M_{e'e}}{M_{ee}} \to 0$ as $r,s \to -\infty$.

  \textbf{Case 2. $e' \sim e$ but $e' \in H^j$ for some $j \neq i$}.
   Again by
  \Cref{cor:normalized_diagonal}, we have
  \[
    \lim_{r \to -\infty}\lim_{\substack{s<r\\ s \to
        -\infty}}\frac{\wt{M}_{e',e}}{\wt{M}_{e,e}} = 0.
  \]
  However, as $e \sim e'$, the lengths $\ell_r(e)$ and $\ell_r(e')$ are
  comparable as $r \to -\infty$, so by Equation \eqref{eqn:comparison}, it
  follows that
  \[
    \lim_{r \to -\infty}\lim_{\substack{s<r\\ s \to
        -\infty}}\frac{M_{e',e}}{M_{e,e}} = 0.
  \]
  
  All in all, in both cases, the number of $e_s'$ in $\phi_{r,s}(e_r)$ is sublinear in the number of $e_s$
  in $\phi_{r,s}(e_r)$ as $r<s$ and $r,s \to -\infty$. 
  Thus, in the immersed path $\phi_{r,s}(e_r)$ in $G_s$ there
  exist two occurrences of $e_s$'s with the same
  orientation, say $e_{s,1}Ae_{s,2} \subset \phi_{r,s}(e_r)$, such that in 
  the gap $A$ between them
  there is no $e'$ such that $e'>e$ or $e' \sim e$ with
  $e' \in H^j$ for some $j \neq i$. Then $\alpha=e_sA$ is the desired witness loop.
\end{proof}

\subsection{Setup for folding sequence}
\label{ssec:UBsetup_folding}
  Mirroring \Cref{ssec:UBsetup_unfolding}, we set up lemmas for folding sequences.
\begin{lemma}[Reduced Folding Sequence]\label{lem:reduced_folding}
  Let $(G_s)_{s \ge 0}$ be a folding sequence with every graph
  isomorphic to $G$. If $(G_s)_{s \ge 0}$ is reduced, then for edges $e,e'$ of $G$ and any number
  $K>0$, there exists $N>0$ such that
  \[
    \langle e'_s, \phi_{0,s}(e_0) \rangle > K, \]  for each $s > N$.
\end{lemma}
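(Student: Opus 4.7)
The plan is to adapt the invariant-subgraph strategy of the reduced unfolding sequence lemma, but with a different invariant to side-step the fact that the conclusion concerns the image path $\phi_{0,s}(e_0)$ rather than a single edge in $G_s$. Indeed, the direct analog $\{f_r : \sup_{s\ge r}\langle e'_s,\phi_{r,s}(f_r)\rangle \le K\}$ is still invariant, but its ``eventually empty'' case only yields, for each edge $f_r$, some $s$ with large count along $\phi_{r,s}(f_r)$, not the uniform statement over all large $s$ that we need. The resolution is to first prove a stronger positivity statement about transition matrices and then bootstrap.

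The main technical step is to show that $M^{r,s}$ becomes entry-wise positive once $s-r$ is large. For any $r\ge 0$ and edge $f_r\in EG_r$, let $H_s := \phi_{r,s}(\{f_r\})\subset G_s$ be the subgraph spanned by the edges appearing in the path $\phi_{r,s}(f_r)$. Since all finite compositions along a folding sequence are morphisms, the concatenation $\phi_{s,s'}(\phi_{r,s}(f_r))$ equals $\phi_{r,s'}(f_r)$ without cancellation, giving $\phi_{s,s'}(H_s)=H_{s'}$ for $r\le s<s'$. Thus $\{H_s\}$ is invariant in the reducedness sense, and it is nonempty since morphisms do not collapse edges, so reducedness forces $H_s=G_s$ for all sufficiently large $s$. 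Taking this threshold uniformly over the finite set $EG_r$ yields $N_2(r)$ such that every entry of $M^{r,s}$ is at least $1$ for $s\ge N_2(r)$.

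With positivity in hand, set $L_s := \sum_g (M^{0,s})_{g,e_0}$, the number of edges in $\phi_{0,s}(e_0)$, and observe that $L_s$ is nondecreasing since morphisms map edges to paths of length at least $1$. A single application of positivity at $r=0$ only gives $L_s\ge|EG|$, which is the main obstacle to overcome. Bootstrapping via $M^{0,t}=M^{s,t}M^{0,s}$, using that for $t\ge N_2(s)$ each column sum of $M^{s,t}$ is at least $|EG|$, yields $L_t\ge|EG|\cdot L_s$, hence $L_{s_n}\ge|EG|^n$ along a subsequence; combined with monotonicity this forces $L_s\to\infty$. To conclude, given $K>0$ choose $s_0$ with $L_{s_0}>K$ and set $N:=N_2(s_0)$; then for every $t>N$, positivity of $M^{s_0,t}$ gives
\[
  \langle e'_t,\phi_{0,t}(e_0)\rangle=(M^{0,t})_{e'_t,e_0}=\sum_g(M^{s_0,t})_{e'_t,g}(M^{0,s_0})_{g,e_0}\ge L_{s_0}>K.
\]
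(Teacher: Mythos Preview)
Your proof is correct and takes a genuinely different route from the paper's. The paper mirrors its unfolding argument: for each target time $s$ it forms the ``bad'' subgraphs $H_s^i=\{f_i:\langle e_s,\phi_{i,s}(f_i)\rangle\le K\}$, shows $\phi_{i,i+1}(H_s^i)\subset H_s^{i+1}$, and then, assuming $H_s^0\neq\varnothing$ for infinitely many $s$, runs a diagonal subsequence argument to manufacture a single invariant sequence $(H_i)_{i\ge 0}$ on which reducedness can be invoked; a further ad hoc step is then needed to exclude the ``eventually whole'' alternative. Your argument instead isolates a clean intermediate fact---eventual entrywise positivity of $M^{r,s}$---by applying reducedness directly to the forward orbit of a single edge, and then bootstraps: positivity gives $L_t\ge |EG|\cdot L_s$ along a cofinal set, monotonicity of $L_s$ upgrades this to $L_s\to\infty$, and one more use of positivity converts the total count $L_{s_0}>K$ into the desired lower bound on the specific entry $(M^{0,t})_{e',e_0}$. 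This avoids the subsequence bookkeeping entirely, and the positivity statement is a natural standalone lemma. The paper's approach, by contrast, keeps the argument structurally parallel to the unfolding case, at the cost of the extra diagonalization. One small remark: your invocation of reducedness relies on the reading ``eventually whole or eventually empty'' (since your $H_r=\{f_r\}$ is not all of $G_r$); this is exactly how the paper itself uses the definition in its own proof, so the usage is consistent.
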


\begin{proof}
  Fix $K>0$ and $s>0$. Let $e_s \in EG_s$. For $0 \le i \le s$, consider the subgraph $H_s^i
  \subset G_i$ whose edge set is given by:
  \[
    EH_s^i = \{f_i \in EG_i\ |\ \langle e_s, \phi_{i,s}(f_i) \rangle \le K\}.
  \]
  We note here $H_s^s =G_s$. 
  Using the same argument as in the proof of \Cref{lem:reduced_unfolding}, we
  have $\phi_{i,i+1}(H_s^i) \subset H_{s}^{i+1}$ for $0 \le i < s$.

  Now we want to show that all but finitely many $s>0$,
  $H_s^0$ is the empty set, for every choice $e_s \in EG_s$.
  Suppose for the sake of contradiction that there exist infinitely many $s>0$
  such that $H_s^0$ is not empty for some choice of $e_s \in EG_s$.
  As $G_0$ is a finite graph, there could be only finitely many different
  subgraphs of $G_0$. Hence, we may take an infinite subsequence of such $s$'s
  such that $H_s^0$'s are the same nonempty subgraph of $G_0$. By taking a
  further subsequence, we may assume the $H_s^0$'s are constructed with $e_s$ sharing the same
  $e \in EG$. Repeating the same argument, we may assume there exists a sequence
  $(s_j)_{j>0}$ of positive integers such that for each $i\ge 0$, whenever
  defined, $H_{s_j}^i$'s
  are all identical for $j>0$. Hence, for each $i \ge 0$, set $H_i = H_{s_j}^i$
  for any $j>0$ such that $s_j \ge i$. Then as $\phi_{i,i+1}(H_s^i) \subset
  H_s^{i+1}$, it follows that $(H_i)_{i \ge 0}$ is an invariant sequence of
  subgraphs. By the reducibility of $(G_s)_{s>0}$, it follows that $(H_i)_{i\ge
    0}$ is eventually either the sequence of empty sets or the sequence of the whole
  graphs.

  Suppose that $(H_i)_{i \ge 0}$ is eventually the sequence of the whole graphs.
  Namely, say there is $N>0$ such that $H_i = G_i$ for each $i \ge N$. Then
  by the identification above, there exists an increasing sequence $(t_j)_{j \ge
    1}$ such that $t_1 > N$ and $e_{t_j} \in EG_{t_j}$ such that
  \[
    H_N = \{f_N \in EG_N\ |\ \langle e_{t_j}, \phi_{N, t_j}(f_N)  \rangle \le K\}
  \]
  for each $j \ge 1$. Since $H_N = G_N$, we have
  \[
    \langle e_{t_j}, \phi_{N, t_j}(G_N) \rangle \le |EG| \cdot K
  \]
  for each $j \ge 1$. Recall we have reduced to the case where $H_N$'s are
  constructed with the same $e \in EG$. 
  Then as $\phi_{N,t_j}$'s are
  surjective, it follows that the sequence $(\langle e_{t_j}, \phi_{N,t_j}(G_N)
  \rangle)_{j \ge 1}$ is a nondecreasing sequence, bounded above by $K|EG|$.
  Hence it is eventually constant. Say it is constant for $j \ge M$.
  This means that for $i \ge t_M$, no folding is happening on the preimages of
  $e_{t_j}$ in $G_i$, where $j>0$ is any integer such that $t_j>i$. Now for $i \ge t_M$, take the subgraph $H'_i$
  of $G_i$ whose edge set consists of the edges not mapped over $e_{t_j}$ for
  every $j$ such that $t_j > i$. Then $(H'_i)_{i \ge t_M}$ is a sequence of
  invariant subgraphs, which cannot be the whole graph as $\phi_{i, t_j}$ is
  surjective. Hence by the reducibility of $(G_s)_{s \ge 0}$, $(H_i')_{i \ge
    t_M}$ is the sequence of empty graphs. Running the same argument with
  different choice of $e_{t_j}$ that appears in the identifications of $H_N$, we
  conclude that every sequence $(H_i')$ associated to each choice of $e_{t_j}$
  is the sequence of empty sets. However, this implies that eventually there is no
  folding happening on $(G_s)_{s \ge 0}$, contradiction.

  All in all, we conclude that $(H_i)_{i \ge 0}$ is eventually a sequence of
  empty sets. By taking larger $K >0$ and running the same argument, we have
  $\lim_{s \to \infty}\langle e_s, \phi_{0,s}(f_0) \rangle = \infty$ for every
  choice of $e,f \in EG$, as desired.
\end{proof}

We have very similar witness lemma for the folding sequence. 
\begin{lemma}[Witness Lemma for Folding Sequence]\label{lem:witness-folding}
  Let $(G_s)_{s \ge 0}$ be a reduced folding sequence with every graph isomorphic to
  $G$. For $e \in EG$ with $e \in H^i$ for some $i > 0$, there exists an immersed loop
  $\alpha$ in $G$ such that:
  \begin{enumerate}[label=(\arabic*)]
  \item $\alpha$ contains $e$,
  \item $\alpha$ does not go through $e' \in EG$ with
    $e' > e$, and
  \item $\alpha$ does not go through $e' \in EG$ with $e' \sim e$ and $e' \in
    H^j$ for some $j\neq i$ such that $j>0$.
  \end{enumerate}
\end{lemma}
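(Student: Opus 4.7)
I will mirror the proof of \Cref{lem:witness-unfolding}, making the small adjustments forced by the folding form of \Cref{lem:transition_relation} (in which the normalizations involve the frequency current $\mu$ rather than the simplicial length function $\ell$). After passing to a subsequence and applying \Cref{cor:normalized_diagonal}, I may assume the normalized transition matrices $\wt M^{r,s}$ converge to the retraction $\wt M$ and that the diagonal entry $\wt M^{r,s}_{e,e}$ stays bounded below by a positive constant for $r,s$ large. The witness loop $\alpha$ will be extracted as a subpath $e_s A$ of the immersed path $\phi_{r,s}(e_r)\subset G_s\cong G$, where $A$ is the gap between two same-orientation occurrences of $e_s$ that is free of bad edges.

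The core estimate is that, for every bad edge $e'$ (as in conditions (2) or (3)), the ratio
\[
\frac{M^{r,s}_{e',e}}{M^{r,s}_{e,e}} \;\longrightarrow\; 0
\]
as $r,s\to\infty$. The folding normalization in \Cref{lem:transition_relation} gives
\[
\frac{\wt M^{r,s}_{e,e'}}{\wt M^{r,s}_{e,e}} \;=\; \frac{\mu_s(e)}{\mu_s(e')} \cdot \frac{M^{r,s}_{e',e}}{M^{r,s}_{e,e}},
\]
so the $M$-ratio equals $(\mu_s(e')/\mu_s(e)) \cdot (\wt M^{r,s}_{e,e'}/\wt M^{r,s}_{e,e})$. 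In Case 1 ($e'>e$), the definition of the folding order yields $\mu_s(e')/\mu_s(e)\to 0$, while the $\wt M$-ratio is bounded since $\wt M^{r,s}$ converges and $\wt M^{r,s}_{e,e}$ is bounded below. In Case 2 ($e'\sim e$ with $e'\in H^j$ for some $j\neq i$, $j>0$), the factor $\mu_s(e')/\mu_s(e)$ is bounded in a positive interval, whereas the block-diagonal structure of $\wt M$ from \Cref{cor:normalized_diagonal} forces $\wt M^{r,s}_{e,e'}\to 0$. Either way the $M$-ratio vanishes.

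To produce the loop I then invoke a pigeonhole argument. Applying \Cref{lem:reduced_folding} to the truncated folding sequence $(G_t)_{t\ge r}$ (which is itself reduced, since any invariant subgraph sequence there extends to an invariant one for the original by taking $H_t=G_t$ for $t<r$) gives $M^{r,s}_{e,e}=\langle e_s,\phi_{r,s}(e_r)\rangle\to\infty$ as $s\to\infty$. At least half of these occurrences of $e_s$ in $\phi_{r,s}(e_r)$ share an orientation, and by the previous paragraph $\sum_{e'\text{ bad}}M^{r,s}_{e',e}$ is sublinear in $M^{r,s}_{e,e}$; consequently the average number of bad edges in a same-orientation gap tends to $0$, so for sufficiently large $r,s$ some gap $A$ contains no bad edge. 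The resulting subpath $\alpha=e_sA$ is immersed (the only concern at the basepoint is a backtracking, which cannot occur since $\phi_{r,s}(e_r)$ is itself immersed) and satisfies conditions (1)--(3). The main technical subtlety is coordinating the double limit so that the finitely many ratio estimates hold simultaneously, which is routinely handled by passing to a diagonal subsequence.
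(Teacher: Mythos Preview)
Your proposal is correct and follows essentially the same approach as the paper, which proves this lemma simply by referring back to \Cref{lem:witness-unfolding} with the normalized transition matrix for the map $V_s\to V_r$; you have accurately fleshed out the index-swapping forced by the transpose in the folding normalization of \Cref{lem:transition_relation}, and your case analysis and pigeonhole extraction of the loop match the unfolding argument exactly.
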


\begin{proof}
  This is the same as \Cref{lem:witness-unfolding} except we use 
  $\wt{M}:=\wt{M}^{r,s}$ the normalized transition matrix for the linear map
  $V_s \to V_r$ induced by
  the map $G_r \to G_s$.
\end{proof}

  \subsection{Upper bound $2n-1$}
  \label{ssec:upperbound_proof}

Suppose there are $k$ linearly independent ergodic probability measures on the
limiting tree of a reduced folding sequence, $(G_s)_{s\geq 0}$, or on the legal lamination of a reduced unfolding sequence, $(G_s)_{s\leq 0}$.
Then applying \Cref{thm:NPRdecomp-folding} or
\Cref{thm:NPRdecomp-unfolding}, respectively, we obtain the transverse
decomposition $G=H^0\sqcup H^1 \sqcup \ldots \sqcup H^k$.
Now we bound $k$ by carefully measuring the \emph{complexity} of a graph in the following way.

\begin{definition}\label{def:complexity}
Let $G$ be a (possibly disconnected) non-contractible graph and $N_1,\ldots,N_k$
be the \textit{non-contractible} connected components of $G$. Define the
\textbf{complexity}, $\chi_-(G)$, of $G$ as:
\[
    \chi_-(G) := \sum_{i=1}^k -\chi(N_i)=
\sum_{i=1}^k \left(\rk(N_i)-1\right),
\]
where $\chi(N)$ is the Euler characteristic of $N$, which is equal to $1-\rk(N)$
for a non-contractible graph $N$.
\end{definition}
  
  Now we prove \Cref{thm:Main_Upperbound} for both folding and unfolding sequences at once.
  \begin{proof}[Proof of \Cref{thm:Main_Upperbound}.]
    Let $A_1,\ldots,A_{k'}$ be the equivalence classes of edges in $G \setminus
    H^0$ such that if $e \in A_i$ and $e' \in A_j$ with $i<j$, then $e < e'$.
    Subdivide each $A_j$ further, by letting $B_j^i:= A_j \cap H^i$ for each
    $i=1,\ldots,k$. It could be that some of $B_j^i$'s are empty.
    Then take the initial collection $I$ from $G$ as
    \begin{itemize}
    \item All edges in $H^0$.
    \item All single-edged loops. Say there are $s$ such loops.
    \item All rank 1 components of $B_j^i$'s that are not attached to any of $H^0$, the $s$
  single-edged loops, or $B_{j'}^{i'}$ for either 1) $j'<j$ or 2) $j'=j$ and $
  i'<i$. Say there are $m$ such components.
\end{itemize}

Then we attach $B_j^i$'s (those that are not part of $I$) to $I$ one by one in this order:
\[
  B_{1}^1,B_1^2,\ldots,B_1^{k},B_2^1,B_2^2,\ldots,B_2^k,\ldots, B_{k'}^1,B_{k'}^2\ldots,B_{k'}^k.
  \]

  We observe that by our choice of the initial collection, and
  \Cref{lem:witness-folding,lem:witness-unfolding}, each addition of
  $B_j^i$ contributes to the increase of complexity of
  $\chi_-$ at least by 1. Since $\chi_-(G)=n-1$ and $\chi_-(I) \ge 0$, it
  follows that there can be at most $n-1$ addition of $B_j^i$'s, which means
  that at most $n-1$ new ergodic measures can be added, as each $B_j^i = A_j\cap
  H^i$ supports at most one ergodic measure (as it could be empty).

  On the other hand, the initial collection $I$ of edges can support at most $s+m$
  ergodic measures. This is because $H^0$ supports none, each single loop can
  admit at most one ergodic measure, and each $B_j^i$ can support at most one
  ergodic measure. However, as $I$ has total rank at least $s+m$, it follows that $s+m
  \le n$.

  All in all, the number $k$ of ergodic measures supported on $G$ is:
  \[
    k \le (s+m) + (n-1) \le 2n-1,
  \]
  which proves the upper bound of $2n-1$. The equality is possible only
  if $s+m = n$ and if
  each addition of $B_j^i$ exactly increases the $\chi_-$ term by 1. \end{proof}

\section{Arationality and Nongeometricity} \label{sec:arationality}
\subsection{Folding sequence}
Recall that an $\R$-tree $T$ is \textbf{arational} if for every proper free
factor $H \le \bbF$, the action $\bbF \curvearrowright T$ restricted to $H$ is
free and discrete. In this section, we prove that our construction in
\Cref{ssec:lowerboundFinale_folding} can be improved to have the limiting tree be both non-geometric and arational.
\begin{theorem} \label{thm:arational_folding}
    There exists a folding sequence whose limiting tree is non-geometric, arational and admits $2n-6$ linearly independent ergodic length measures.
\end{theorem}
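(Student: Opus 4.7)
The plan is to modify the folding sequence $\calG = (G_k)_{k \ge 0}$ from \Cref{ssec:lowerboundFinale_folding} so that its limit tree $T$ becomes arational and non-geometric, while still admitting the $2n-6$ linearly independent ergodic length measures produced by \Cref{thm:lowerboundFinale_folding}. Following the outline sketched in the introduction, the approach is a diagonal construction: enumerate the countably many conjugacy classes of proper free factors $\{[H_k]\}_{k \ge 1}$ of $F_n$, and at step $k$ of the folding sequence insert an auxiliary homotopy equivalence $\psi_k : \Gamma_{a_0} \to \Gamma_{a_0}$, realized (after subdivision) as a sequence of folds, which forces $H_k$ to act freely and discretely on $T$.

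Concretely, I would insert $\psi_k$ between the applications of $\calF^{(k)}$ and $\calF^{(k+1)}$ and choose it so that some primitive element $h_k \in H_k$ is realized as a short legal loop with respect to the train track structure after $\psi_k$. Since each $H_k$ has representatives as proper subgraphs of $\Gamma_{a_0}$ (up to conjugacy), such a $\psi_k$ can always be built by pre-folding in a neighborhood of the appropriate subgraph. A standard limit argument then shows the translation length $\ell_T(h_k)$ is bounded below by a positive constant coming from the length of the short legal loop, and varying $h_k$ over a generating set of $H_k$ forces $H_k$ to act freely and discretely on its minimal invariant subtree in $T$. Carrying this out for every $k$ yields arationality of $T$ by \Cref{lem:propsTstars}(ii).

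The central technical task is to verify that these insertions preserve the diagonal-dominance of the transition matrices required by \Cref{thm:TheGame_folding,thm:klp_then_ergodic_folding}. Each $\psi_k$ contributes a transition matrix whose entries are bounded by a constant depending only on $k$, so the effect on Alice's constants $K_A$ and $\ov\epsilon_r$ is finite. Since Bob is free at step $k+1$ to choose $\alpha_i^{(k+1)}, \beta_j^{(k+1)}$ as large as he likes, he can absorb this finite multiplicative error and still produce a transposed matrix satisfying the $(2n-6, \epsilon_{k+1})$-condition for any $\epsilon_{k+1}$ that Alice prescribes. Hence Alice's winning strategy persists for the modified sequence, and \Cref{thm:klp_then_ergodic_folding} still yields $2n-6$ linearly independent ergodic length measures on $T$.

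For non-geometricity: for odd $n$ no geometric arational $F_n$-tree exists; for $n \ge 10$ even the Lenzhen--Masur bound gives at most $\tfrac{3n-4}{2} < 2n-6$ ergodic measures on any geometric arational tree, so non-geometricity is automatic. For the remaining cases $n \in \{6, 8\}$, I would enrich the enumeration to include the countably many candidate "puncture" conjugacy classes arising from embeddings of $F_n$ as the fundamental group of some $S_{g,1}$, and insert additional correcting maps ensuring each such class has positive translation length in the limit. The principal obstacle I anticipate is precisely the simultaneous bookkeeping of infinitely many arationality and non-geometricity corrections: each $\psi_k$ must be tailored to the current train track structure induced by $\calF^{(k)} \circ \cdots \circ \calF^{(0)}$, and must be small enough relative to Alice's current $\epsilon_k$ that Bob can still compensate at the next step without losing diagonal-dominance.
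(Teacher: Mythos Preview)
Your proposal has two genuine gaps that the paper's argument avoids.

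First, the claim that ``each $H_k$ has representatives as proper subgraphs of $\Gamma_{a_0}$ (up to conjugacy)'' is false. A proper free factor is represented by a subgraph of a fixed marked graph only when it is compatible with that particular free splitting; a generic proper free factor of $F_n$ is \emph{not} carried by any subgraph of $\Gamma_{a_0}$. So the construction of $\psi_k$ by ``pre-folding in a neighborhood of the appropriate subgraph'' does not get off the ground.

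Second, and more seriously, even if you could arrange that a generating set of $H_k$ consists of legal loops with translation length bounded below, this does \emph{not} force the $H_k$-action on its minimal subtree to be discrete. Legality of generators does not propagate to arbitrary words: concatenations can create illegal turns, and the minimal $H_k$-subtree could still have dense orbits. Arationality requires free \emph{and discrete} action of every proper free factor, and your argument only addresses a lower bound on finitely many translation lengths.

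The paper's approach sidesteps both issues. Rather than working element by element, it tracks the Stallings core $SH_t$ of the $H$-cover along the folding path and observes that the number of illegal turns in $SH_t$ is nonincreasing. By inserting explicit bounded folds (e.g.\ $c \mapsto b_i^k c \overline{b_i}^{k+1}$ or $c \mapsto b_i^s \overline{a_{i-1}} b_{i-1} a_{i-1} \overline{b_i}^s c$, with $k,s$ bounded in terms of $\mathrm{rk}\,H$ via root-closure) one forces the number of illegal turns to strictly drop, eventually to zero. Once $SH_t$ is entirely legal it stabilizes under further folding, so the minimal $H$-subtree is simplicial---this gives freeness and discreteness simultaneously. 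Running the same argument for maximal cyclic subgroups rules out elliptic elements and hence yields non-geometricity uniformly for all $n$, without the case analysis and enumeration of puncture classes you propose for $n\in\{6,8\}$. Your observation that bounded insertions are absorbed by Bob's subsequent choices in the Alice--Bob game is correct and is exactly how the paper preserves the $2n-6$ ergodic measures.
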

We argue that the limiting tree of our folding sequence can be arranged to be arational and free, after perhaps slightly modifying the folding sequence, but without affecting the winning strategy for Alice in the Alice-Bob game, as described in \Cref{thm:TheGame_folding}. 

We endow the folding sequence $(G_i)_{i \ge 0}$ given in
\Cref{ssec:lowerboundFinale_folding} with a metric by assigning arbitrary
positive lengths to the edges of $G_i$'s such that they are consistent under
pulling back. We then scale these lengths so that the volume of $G_0$ is 1. Under this
metric, all folding maps become isometric immersions on each edge, and such a
sequence always converges to an $\R$-tree in $\ov{CV_n}$ by
  \Cref{lem:folding_limiting_tree}. 
  
Take $G_t$ as an induced folding \emph{path} of the folding sequence
$(G_i)_{i\ge 0}$, and reparametrize the folding path $G_t$ so that the
integer values of $t$ correspond to the times when illegal turns arrive at a
valence $\ge 4$ vertex. Thus, $t$ is not an integer if and only if $G_t$ has a
vertex with two gates.

We start with the following observation.
\begin{lemma}
    In this pullback metric on the folding sequence $(G_i)_{i\ge 0}$, every edge
    of $G_i$ has positive length.
\end{lemma}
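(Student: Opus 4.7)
The key observation is that if $\ell_{s+1}$ is positive on every edge of $G_{s+1}$, then $\ell_s$ is positive on every edge of $G_s$. Indeed, each folding map $\phi_s\colon G_s \to G_{s+1}$ is a morphism, hence an immersion on every edge, so the image $\phi_s(e)$ is a nondegenerate edge path in $G_{s+1}$ for each $e\in EG_s$. The pullback compatibility
\[
\ell_s(e) \;=\; \sum_{e'\in EG_{s+1}} (M^{s,s+1})_{e',e}\,\ell_{s+1}(e')
\]
equals the $\ell_{s+1}$-length of this nontrivial path, and is therefore positive whenever all $\ell_{s+1}(e')$ are positive. By downward induction, it suffices to exhibit a single length measure on the folding sequence whose entries on $G_{s_k}$ are positive for some unbounded sequence of indices $s_k$.

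To produce such a length measure, I would apply \Cref{thm:NPR-foldingsequence} to identify the space of length measures on $(G_i)_{i\ge 0}$ with the space of equivariant length measures on the limit tree $T$, and then take the length measure on $T$ induced by its intrinsic metric $d$ from \Cref{lem:folding_limiting_tree}. Under this identification, $\ell_s(e)$ corresponds to the $d$-length of the arc in $T$ associated with a lift of $e$, so the claim reduces to checking that this arc is nondegenerate, i.e., that no edge of any $G_s$ is collapsed in the passage from the topological limit $\hat{T}$ to the $\R$-tree $T$.

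The main obstacle is justifying this non-collapsing. This is where the mixing property of the limit tree of a reduced folding sequence enters, as recorded in the discussion following \Cref{thm:NPR-foldingsequence}: since any closed interval in $\hat{T}$ is covered by finitely many translates of any other, no non-degenerate interval can have pseudo-distance zero, and hence no edge lift is collapsed in forming $T$. For the concrete folding sequence of \Cref{ssec:lowerboundFinale_folding}, mixing should be verifiable directly from \Cref{lem:reduced_folding}: the image path $\phi_{0,s}(e_0)$ crosses every fixed edge arbitrarily many times, which prevents the associated interval of $\hat{T}$ from shrinking in the pseudo-metric and so forces $\ell_s(e)>0$ for all $s$ and $e$.
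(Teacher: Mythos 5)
Your opening observation is correct and is the contrapositive of the step the paper actually uses: positivity propagating backward along the folding sequence is the same as zero lengths propagating forward, because each $\phi_s$ is an immersion on edges. From there, though, your route diverges sharply from the paper's, and the divergence introduces real gaps.

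The paper's proof is short and entirely concrete: it reads off from the transition matrix in the proof of \Cref{thm:lowerboundFinale_folding} that $a_1$ maps over $a_1,a_2,b_1,b_2$, propagates zero lengths forward $n-3$ steps so that every edge of some $G_{i+n-3}$ has length zero, and then pulls back to contradict $\vol(G_0)=1$. This directly treats whatever pullback metric has been fixed and uses no machinery about the limit tree.

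Your proposal has two substantive problems. First, a scope issue: you reduce the lemma to ``exhibiting a single length measure'' that is positive on $G_{s_k}$ for an unbounded family of indices, but the lemma is a claim about the length measure that has already been fixed. Exhibiting some other positive length measure does not show the given one is positive, and if the exhibited measure is meant to \emph{be} the given one, then the reduction is vacuous and you are simply restating the claim. Second, the heart of your argument rests on the mixing property of the limit tree, which the paper asserts in the discussion after \Cref{thm:NPR-foldingsequence} but does not prove; replacing a two-line matrix computation with an appeal to this unproven dynamical fact undoes exactly what makes the paper's proof self-contained. Moreover, the step ``reduce to showing the arc of $T$ associated with a lift of $e_s$ is nondegenerate'' is nearly circular: the $d$-length of that arc \emph{equals} $\ell_s(e)$ by construction of the pseudo-metric, so non-degeneracy of the arc is not an independent, more tractable condition than $\ell_s(e)>0$. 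The elementary forward-propagation argument through the explicit transition matrix is both more direct and actually closes the proof.
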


\begin{proof}
    Suppose that $a_1$ in some $G_i$ has length zero. Then all edges in
    $G_{i+1}$ that $a_1$ maps over also have length 0. From the first matrix in
    the proof of \Cref{thm:lowerboundFinale_folding}, we see that the edges
    $a_1$ maps over include $a_1,a_2,b_1,b_2$. By the same argument, edges $a_1,
    \ldots, a_{n-3}, b_1, \ldots, b_{n-3}$ have length 0 in $G_{i+n-4}$, and
    then all edges have length 0 in $G_{i+n-3}$, contradicting that the length
    function on this graph pulls back to $G_0$ with volume 1. A similar argument works if
    any other edge in $G_i$ in place of $a_1$ has length 0.
\end{proof}

To prove \Cref{thm:arational_folding}, we claim a slightly stronger assertion, but we recall first how we count the number of illegal turns. A gate containing $k$ oriented edges contributes $k-1$ illegal turns. To find the number of illegal turns in a graph, we add those up over all gates. 
\begin{lemma}\label{lem:arational_main}
    Let $H \le F_n \cong \pi_1(G_0)$ be a proper free factor, or a maximal
    cyclic subgroup. Then the folding sequence $(G_i)_{i \ge 0}$ can be arranged
    so that the Stallings core $SH_i$ of the $H$-cover of $G_i$ has no illegal turns for large $i$.
\end{lemma}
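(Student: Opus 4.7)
The plan is to exploit the freedom in choosing the integer parameters $\alpha_j^{(i)}$ and $\beta_j^{(i)}$ in the loopers $\cL_{a_j}, \cL_{b_j}$ appearing in $\calF^{(i)}$. At a well-chosen finite stage $i_H$, we will make these parameters large enough to force the finitely many illegal turns of $SH_{i_H}$ to be folded away by $\calF^{(i_H)}$, and then argue that this condition persists for all subsequent stages, giving ``no illegal turns for large $i$''.

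First I would prove a persistence statement: if $SH_i$ has no illegal turns with respect to the induced train track structure on $G_i$, then $SH_j$ has no illegal turns for every $j > i$. The argument goes through the $H$-covers, where the folding map $G_i \to G_{i+1}$ lifts to an equivariant morphism that restricts to a morphism $SH_i \to SH_{i+1}$. An illegal turn in $SH_{i+1}$ would lift to two distinct directions at a preimage vertex of $SH_i$ whose images in the limit tree coincide, contradicting the hypothesis that these directions already lie in distinct gates at stage $i$. Hence once illegal turns disappear, they do not reappear as we fold further.

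Second, at the chosen stage $i_H$, I would use the fact that $SH_{i_H}$ is a finite graph with only finitely many pairs of competing directions at vertices. Choosing the looper exponents $\alpha_j^{(i_H)}, \beta_j^{(i_H)}$ sufficiently large causes each relevant edge to be wrapped many times under $\calF^{(i_H)}$, so that in the cover the images of any two competing directions travel along genuinely different long initial segments and therefore land in distinct gates at $G_{i_H+1}$. This is fully compatible with Alice's winning strategy in \Cref{thm:TheGame_folding}, because Alice's constraint is only an upper bound on off-diagonal-to-diagonal ratios of the transpose transition matrix, and enlarging the looper exponents only enlarges diagonal entries. The main obstacle is the gate-tracking in the persistence step, which I would handle by a direct covering-space argument; this is the primary technical ingredient, and its details need to be verified separately in the case $H$ is a proper free factor and in the case $H$ is a maximal cyclic subgroup.
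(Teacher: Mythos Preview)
Your persistence step is essentially right: once $SH_i$ is legal it stays legal, and the paper uses a closely related monotonicity (the number of illegal turns in $SH_t$ is nonincreasing along the folding path).

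The genuine gap is in your second step. Enlarging the looper exponents $\alpha_j^{(i)},\beta_j^{(i)}$ does \emph{not} separate the directions of an illegal turn in $SH_i$. The only illegal gate in $\Gamma_{a_0}$ is $\{c,\bar c,a_0\}$; under $\calF$ one computes that $c$, $\bar c$, and $a_0$ all begin with $a_1$ (from $\cQ_2$, $\cQ_1$, and $a_0\mapsto a_1$ respectively), and this initial agreement is completely independent of the $\alpha_j,\beta_j$, which only affect later letters of the images of $a_j,b_j$ for $j\ge 1$. So two directions at a vertex of $SH_i$ lifting $\{c,a_0\}$ or $\{c,\bar c\}$ will still have coinciding images in $G_{i+1}$ no matter how large you make the looper exponents; the illegal turn simply persists. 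Your sentence ``the images of any two competing directions travel along genuinely different long initial segments'' is exactly backwards: being in the same gate means their images have the \emph{same} initial segment, and the loopers do not touch that segment.

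What the paper actually does is quite different. It \emph{inserts new maps} into the folding sequence (such as $c\mapsto b_i^k c\,\bar b_i^{\,k+1}$ or $c\mapsto b_i^s\bar a_{i-1}b_{i-1}a_{i-1}\bar b_i^{\,s}c$), where the exponent $k$ or $s$ is read off from the local structure of $SH_t$ near the offending vertex $v$. These insertions push the rein $c$ past the finite $b_i$-tail present in the core, so that the illegal turn leaves the core. A case analysis (illegal turn through a lift of $b_i$, through a lift of $a_i$, or of type $\{c,\bar c\}$) is needed, and the root-closed property of $H$ together with the hypothesis that $H$ is a \emph{proper} free factor or maximal cyclic is used essentially in the $a_i$-case to rule out the possibility that the core contains a full copy of $\Gamma'\cong G_t\setminus c$. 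Your proposal never invokes root-closedness or properness, which is another sign that the mechanism you describe cannot be the right one.
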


The reason why \Cref{lem:arational_main} implies \Cref{thm:arational_folding} comes from definitions. 
Indeed, eventually having no illegal turns in the Stallings cores implies that the sequence $(SH_i)_{i \ge 0}$ becomes eventually constant. Since the minimal $H$-subtree is the limit of the universal covers of the Stallings cores $SH_i$, it follows that the limiting $H$-subtree on which $H$ acts has to be a simplicial tree; which is free and discrete. As the same holds for every maximal cyclic subgroup, this implies that there cannot be an elliptic element, showing that the limiting tree is non-geometric.

\begin{proof}[Proof of \Cref{lem:arational_main}]
    Firstly, we observe the number of illegal turns may change only at discrete
    times. We further claim that in $SH_t$ the number of illegal turns never
    increases as $t \to \infty$. This is because $SH_t$ immerses into $G_t$, so
    a neighborhood of a vertex in $SH_t$ is embedded in that of the image vertex
    in $G_t$. If furthermore the immersion $SH_t \to G_t$ is a homeomorphism of
    the neighborhoods, then the number of illegal turns does not change in this
    neighborhood at time $t$. Otherwise, if the map is just an embedding of a
    neighborhood to a proper subset of the neighborhood of the image vertex,
    then the number of illegal turns may decrease at time $t$.
    Therefore, it suffices to argue that it can be arranged that the number of illegal turns in $SH_t$ \emph{eventually drops}.
    We remark here that the only ways to increase the number of illegal turns in
    the folding path are characterized in \cite{bestvina2014hyperbolicity}, and
    a typical scenario is in \Cref{fig:increasing_illegals}.

    \begin{figure}[ht!]
        \centering
        \includegraphics[width=.6\textwidth]{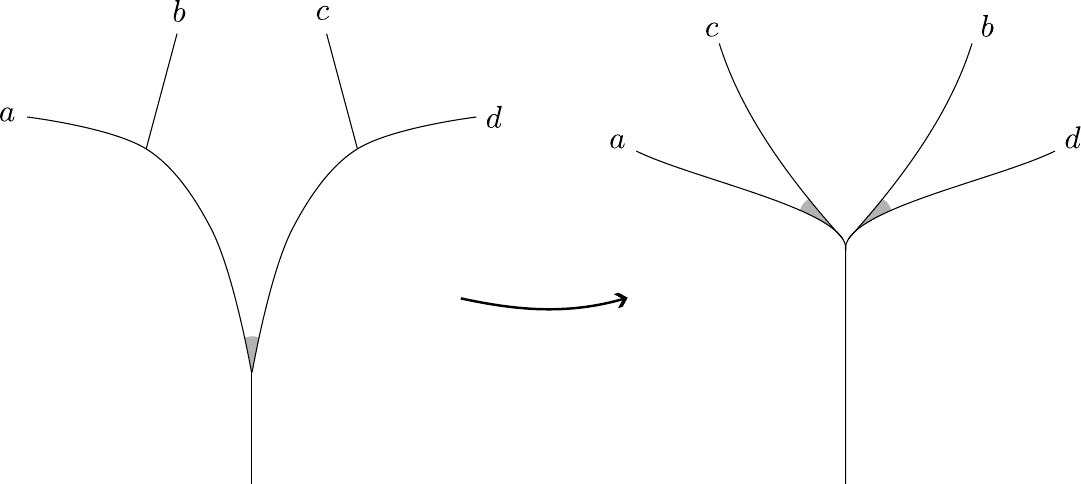}
        \caption{(cf. \cite[Figure 5]{bestvina2014hyperbolicity}.) A typical
          scenario for the number of illegal turns (colored in gray) to increase
          after folding. This situation does not happen in our folding sequence;
          all of our valence $\ge 3$ vertices never merge to valence $\ge 5$
          vertices during the folding.}
        \label{fig:increasing_illegals}
    \end{figure}

    Now, let $v$ be a vertex in $SH_t$ at which an illegal turn exists. We will
    insert folding maps in the folding sequence $(G_s)_{s \ge 0}$ such that the number of illegal turns at $v$ in $SH_t$ drops, while ensuring that Alice still has a winning strategy. We divide the cases based on the edges incoming to the vertex $v$.

    \textbf{Case 1. An illegal turn at $v$ in $SH_t$ enters $v$ through some
      lift of $b_i$.} We examine what other edges in $SH_t$ are attached at $v$.
    If the loop $b_i$ lifts to a \emph{loop} in $SH_t$ at $v$, then the illegal turn would have entered $v$ on the previous step $SH_{t-1}$, and we would rewind to that time and move on to the appropriate subcase of Case 
    2. So, we may assume that $b_i$ does not lift to a loop in $SH_t$ based at $v$. 
    Then, as $H$ is \emph{root-closed} (i.e., $h^k \in H$ for some $k \neq 0$
    implies $h \in H$), no power of $b_i$ lifts to a loop based at $v$. Let $k
    \ge 0$ be the maximum $k$ such that there is a path $b_i^k$ in $SH_t$ that
    starts at $v$. In $SH_t$, the edge $b_i$ could form an illegal turn with
    $a_{i}$ or $\ov{a_{i-1}}$, as well as with $c$ or $\ov{c}$. If either $\{a_i,b_i\}$ or $\{\ov{a_{i-1}},b_i\}$ are illegal, 
    we
    wait; namely, for larger $t'$, we can guarantee that the image of $v$ in
    $SH_{t'}$ has no illegal turn $\{a_i,b_i\}$ or $\{\ov{a_{i-1}},b_i\}$.
    Hence, without loss of generality, it suffices to deal with the case when the turn $\{\ov{c},b_i\}$
    forms an illegal turn at $v$. Recall that our folding sequence must be constructed in such a way that the resultant transition matrices satisfy the conditions required to preserve Alice's winning strategy, as described in
    \Cref{thm:TheGame_folding}. We first try inserting the fold $c
    \mapsto b_i^k c \ov{b_i}^{k+1}$ in the folding sequence. Then, the number of
    illegal turns decreases in $SH_t$, as illustrated in
    \Cref{fig:decreasing_illegals_1}.
    \begin{figure}
        \centering
        \includegraphics[width=.7\textwidth]{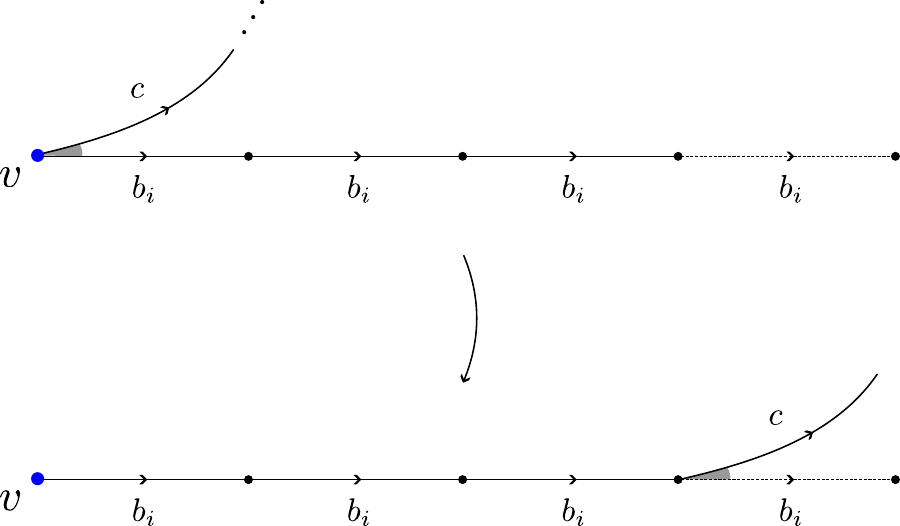}
        \caption{Illustration of folding $c$ over $b_i$ by $k=3$ times as a result of inserting $c \mapsto b_i^3c\ov{b_i}^4$. There is an illegal turn (in gray) between $c$ and $b_i$ in $G_t$, but not in the latter \emph{core} since the edge $b_i$ incident to $c$ is not in the core anymore, as drawn in dashed edge.}
        \label{fig:decreasing_illegals_1}
    \end{figure}
    
    If $k$ is uniformly bounded throughout the folding sequence, 
    then the proof of \Cref{thm:TheGame_folding}
    shows that Alice still has a winning strategy for 
    this modified folding sequence.
    Otherwise, 
    we observe that the 
    bounded rank condition on $H$ implies that the number of vertices of valence
    $\ge 3$ in $SH_t$ is bounded.
    Therefore, on the path $b_i^k$, we can find a path $b_i^s$ of bounded length
    $s \ge 0$ from $v$ such that this subpath ends at a valence $2$ vertex. In this case, we insert the fold 
    $c \mapsto b_i^s \ov{a_{i-1}} b_{i-1} a_{i-1} \ov{b_i}^{s} c$. With this fold, the
    number of illegal turns in $SH_t$ also decreases, as shown in
    \Cref{fig:decreasing_illegals_2}. As $s$ is uniformly bounded by $\rk H$, 
    Alice's winning strategy described in the proof of \Cref{thm:TheGame_folding} is still valid, and so the resultant folding sequence still satisfies the conditions in \Cref{list:Alice_win_folding}.
    This concludes Case 1.
    \begin{figure}[ht!]
        \centering
        \includegraphics[width=.7\textwidth]{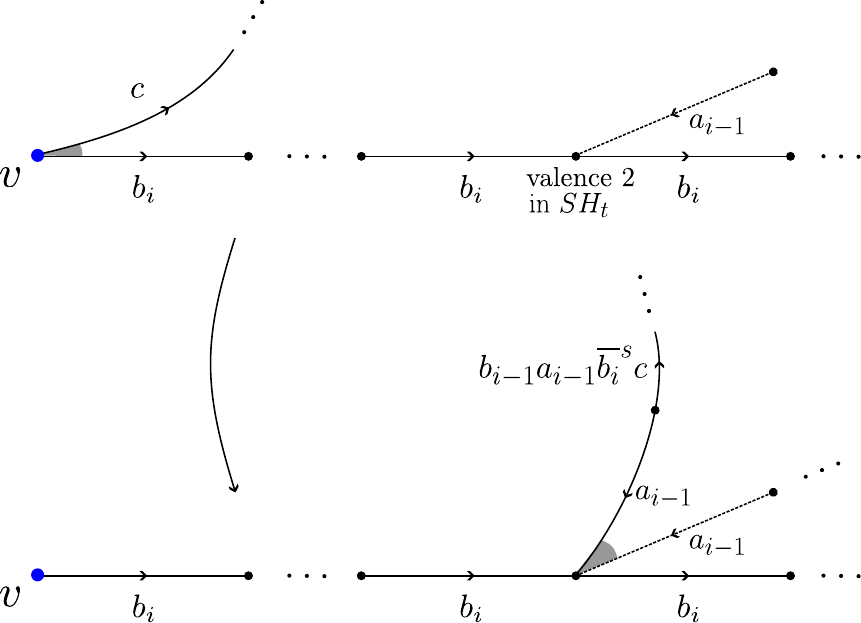}
        \caption{Illustration of folding $c$ over $b_i$ by $s$ times as a result of inserting $c \mapsto b_i^s \ov{a_{i-1}} b_{i-1} a_{i-1} \ov{b_i}^{s} c$. The picture below illustrates that the illegal turn between $c$ and $b_i$ in $G_t$ has already been moved to outside the core, where the dashed edge is an edge not in the core. Hence, the resulting graph after the folding will have less illegal turns in the core.}
        \label{fig:decreasing_illegals_2}
    \end{figure}

    \textbf{Case 2. An illegal turn at $v$ in $SH_t$ enters $v$ through some
      lift of $a_i$.} As in the discussion of Case 1, we can wait if there is an
    illegal turn $\{a_i,b_{i}\}$. Hence, it remains to check when the turn
    $\{\ov{c},a_i\}$ forms an illegal turn at $v$. If $b_i$ is not attached as a
    loop in $SH_t$ at $v$, then we can convert the illegal turn to
    $\{\ov{c},b_i\}$ by inserting $c \mapsto b_i^kc\ov{b_i}^{k+1}$ or $c \mapsto
    b_i^s\ov{a_{i-1}}b_{i-1}a_{i-1}\ov{b_i}^s c$ as in Case 1. In this case, if
    $b_i$ is not attached to $v$ at all in $SH_t$, this conversion results in
    a decrease of the number of illegal turns in $SH_t$. Even if $b_i$ is
    attached to $v$ (but not as a loop), following Case 1, the insertion of such
    maps will still move the illegal turns to outside the core.

    On the other hand, if $a_{i-1}$ is not attached to $v$ in $SH_t$, then we can insert $c \mapsto \ov{a_{i-1}}b_{i-1}a_{i-1} c$ to first convert the illegal turn to $\{c,\ov{a_{i-1}}\}$. Then, having no $a_{i-1}$ attached to $v$ prevents $\{c,\ov{a_{i-1}}\}$ from being illegal in the core, so we decrease the number of illegal turns in this case as well.

    All in all, we may assume that the core $SH_t$ has both (1) the loop $b_{i}$
    at $v$ and (2) $a_{i-1}$ at $v$, as well as $a_i$ and $c$. In this case, we notice that a neighborhood of the vertex $v$ in $SH_t$ is locally homeomorphic to a neighborhood of the image vertex in $G_t$
    (after possibly deleting the $c$ loop). Hence, continuing the folding path,
    $c$ eventually moves along $a_i$ to the next vertex. At this stage,
    similarly we can either reduce the number of illegal turns, or we see this
    new vertex is incident with the loop $b_{i+1}$ and $a_{i+1}$. If the latter
    case keeps happening, the core $SH_{t}$ contains a locally homeomorphic copy
    of $\Delta:=\ov{G_t \setminus c}$ (which is isomorphic to $\G'$ in
    \Cref{fig:lowerbound}), namely the subgraph of $G_t$ minus the
    loop $c$. As the core is compact, it follows that $SH_t$ contains a finite
    cover of $\Delta$ as a subgraph, which implies that $H$ contains a finite-index subgroup of the free factor $\pi_1(\Delta)$. By the root-closed condition on
    $H$, it follows that $H$ contains the full factor $\pi_1(\Delta)$. If
    $H$ is a maximal cyclic subgroup of $F_n$, then we already have a contradiction by
    comparing the rank, so consider the case when $H$ is a proper free factor.
    Here, $\pi_1(\Delta)$ being a maximal proper free factor in $\pi_1(G_t)$
    implies that $H= \pi_1(\Delta)$. However, $\pi_1(\Delta)$ keeps changing with
    folding, as $\Delta$ is already legal. This is a contradiction. Therefore, we decrease the number
    of illegal turns, rather than seeing a copy of $G_t$ minus $c$ inside the
    core.

    \textbf{Case 3. A 1-gate illegal turn at $v$ in $SH_t$ consists of lifts of $c$ and $\ov{c}$.}
    We note that excluding Case 1 and 2, it remains to consider this case, since every illegal turn either involves $c$ or $\ov{c}$ (or both). 
    For this case, we insert $c \mapsto b_ic\ov{b_i}^2$ in the folding path. In doing so, the number of illegal turns decreases in $SH_t$ so that all turns in $SH_t$ are now legal. Indeed, inserting such a map splits the $c$ and $\overline{c}$ edges so they are now incident to different vertices, as in \Cref{fig:decreasing_illegals_3}.
    \begin{figure}[ht!]
        \centering
        \includegraphics[width=.5\textwidth]{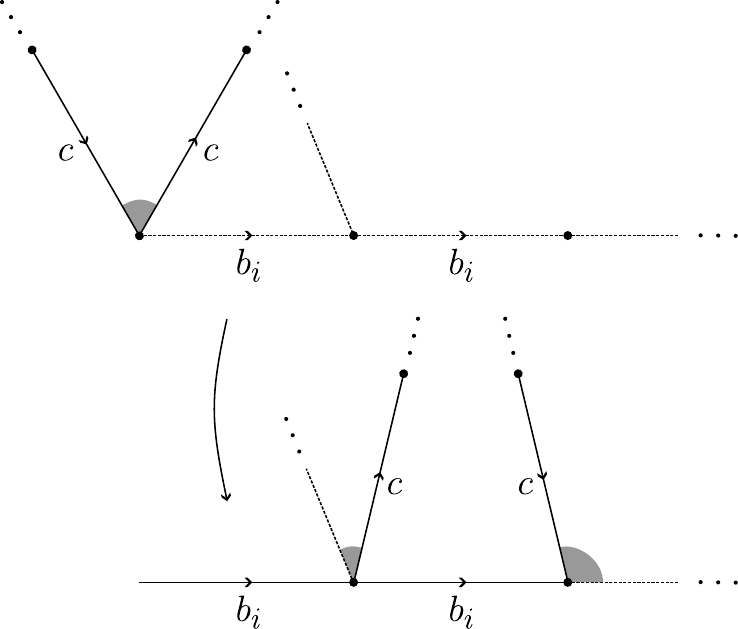}
        \caption{Illustration of Case 3. Inserting $c \mapsto b_ic\ov{b_i}^2$ splits the edges $c,\ov{c}$ so that the number of illegal turns in the core decreases.}
        \label{fig:decreasing_illegals_3}
    \end{figure}
    
    To conclude, we note that each time we insert additional folds, the new folding sequence still satisfies the conditions in \Cref{list:Alice_win_folding} required for Alice to have a winning strategy described in \Cref{thm:TheGame_folding}.
    Therefore, 
    the resultant folding sequence still has transition matrices which satisfy the hypotheses of \Cref{thm:klp_then_ergodic_folding}. Furthermore, each time a new fold is inserted for a particular free factor, it does not affect any of the folds or free factors that have been adjusted previously. This is because each time we insert a new fold into the folding sequence, it occurs later in time from any previous fold that has been inserted. Once an illegal turn becomes legal, it stays legal regardless of any additional foldings.
\end{proof}

\subsection{Unfolding sequence} In this section, we consider the unfolding sequence given in \Cref{ssec:lowerboundFinale}. We show that our construction can be modified to ensure that the legal lamination of our unfolding sequence is arational and non-geometric. In particular,
we will consider the inverse of the unfolding sequence given in \Cref{ssec:lowerboundFinale}, and equip it with an
appropriate train track structure to make it a folding sequence.
Then, the same proof as in \Cref{lem:arational_main} shows that we can arrange
the unfolding sequence such that its inverse folding sequence has arational
limiting tree. This shows that an arational tree is in the accumulation set of
the unfolding sequence. Hence, by \cite[Theorem 1.1]{bestvina2015boundary}, the
unfolding sequence descends into the free factor complex as a quasi-geodesic
limiting to a point on the boundary. Therefore, the accumulation points of the
unfolding sequence lie in a single simplex consisting of arational trees, concluding that
the legal lamination of our unfolding sequence is arational.
Thus, it suffices to investigate the inverse of our unfolding sequence and
an appropriate train track structure on it.

We first compute the inverses of the rein movers, loopers, and the rotator given
in the proof of \Cref{thm:lowerboundFinale_folding}. The inverses of the rein movers $\cR_{a_0},\ldots,\cR_{a_{n-4}}$ and $\cR_{b_{1}},\ldots,\cR_{b_{n-3}}$ are:
\begin{align*}
    &\cR_{a_i}^{-1}: \G_{b_{i+1}} \to \G_{a_i},
    &&\cR_{a_i}^{-1} =
    \begin{cases}
    c \mapsto \ov{a_i}ca_i,\\
    \text{Id} & \text{otherwise},
    \end{cases}\\
    &\cR_{b_i}^{-1}: \G_{a_i} \to \G_{b_i},
    &&\cR_{b_i}^{-1} =
    \begin{cases}
    c \mapsto \ov{b_i}^2cb_i,\\
    \text{Id} & \text{otherwise}.
    \end{cases}
\end{align*}

Secondly, the inverses of the loopers $\cL_{a_1},\ldots,\cL_{a_{n-3}}$ and $\cL_{b_1},\ldots,\cL_{b_{n-3}}$ are:
\begin{align*}
    &\cL_{a_i}^{-1}: \G_{a_i} \to \G_{a_i},
    &&\cL_{a_i}^{-1} =
    \begin{cases}
        a_i \mapsto (\ov{b}_i\ov{a_{i-1}}\ov{b_{i-1}}a_{i-1})^{\alpha_i}\ov{c}a_i \\
    \text{Id} & \text{otherwise},
    \end{cases}\\
    &\cL_{b_i}^{-1}: \G_{b_i} \to \G_{b_i},
    &&\cL_{b_i}^{-1} =
    \begin{cases}
        b_i \mapsto \ov{a_{i-1}}\ov{b_{i-1}}^{\beta_i}a_{i-1}\ov{c}b_i \\
        \text{Id} & \text{otherwise}.
    \end{cases}
\end{align*}

Finally, we compute the inverse of the \emph{rotator} $\rho$:
\[
    \rho^{-1}: \G_{a_0} \to \G_{a_{n-3}},
    \qquad \qquad
    \rho =
    \begin{cases}
        a_i \mapsto a_{i-1} & \text{for $i=0,\ldots,n-3$,}\\
        b_i \mapsto b_{i-1} & \text{for $i=0,\ldots,n-3$,}\\
        c \mapsto c.
    \end{cases}
\]

Then our desired map $\calF^{-1}$ for the inverse folding sequence is just the inverse of the folding map $\calF$ of our unfolding sequence:
    \begin{align*}
        \calF^{-1} &= \left(\rho \circ (\cL_{a_{n-3}} \circ \cR_{b_{n-3}})\circ (\cL_{b_{n-3}} \circ \cR_{a_{n-4}}) \circ \cdots \circ (\cL_{a_1} \circ \cR_{b_1}) \circ (\cL_{b_1} \circ \cR_{a_0})\right)^{-1}\\
        &= \left(\prod_{i=1}^{n-3}[\cR_{a_{i-1}}^{-1} \circ \cL_{b_i}^{-1} \circ \cR_{b_i}^{-1} \circ \cL_{a_{i}}^{-1}] \right) \circ \rho^{-1},
    \end{align*}
    which is a map from $\G_0$ to $\G_0$.

To make it a train track map, we endow the train track structure as follows. For
oriented edges $e_1,e_2$ of the graph $\G_e$, denote by $\G_{e_1,e_2}$ the graph
$\G_{e_1}$(which is defined in \Cref{ssec:lowerboundFinale_folding}), but with different
train track structure: it has two illegal turns given by either (1) if neither
$e_1$ nor $e_2$ is $c$ or $\ov{c}$, then the two illegal turns are given by the
two gates $\{e_1,e_2\}$ and $\{c,\ov{c}\}$, or (2) otherwise, the two illegal turns are given by the one gate
$\{e_1,e_2\} \cup \{c, \ov{c}\}$ consisting of three oriented edges. See
\Cref{fig:lowerbound_inverses} for an example.

\begin{figure}[ht!]
    \centering
    \includegraphics[width=.8\textwidth]{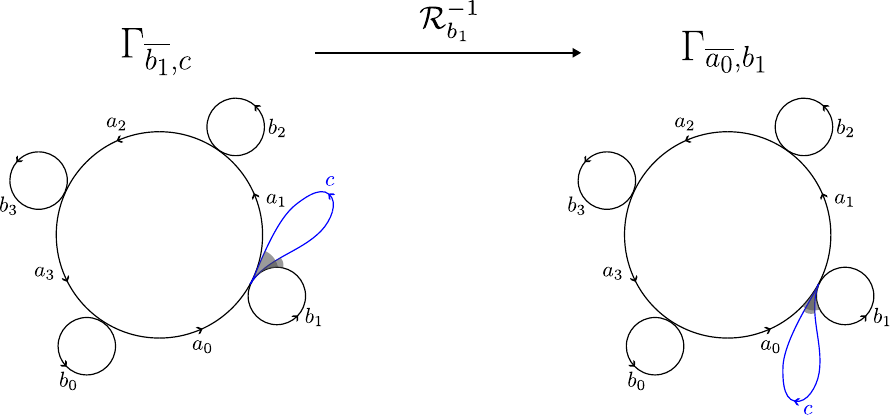}
    \caption{An illustration of the inverse $\cR_{b_1}^{-1} : \G_{\ov{b_1},c} \to \G_{\ov{a_0},b_1}$ of the rein mover map $\cR_{b_1} : \G_{b_1} \to \G_{a_1}$. Here note $\G_{\ov{b_1},c} \cong \G_{a_1}$ and $\G_{\ov{a_0},b_1} \cong \G_{b_1}$ as graphs, but they have different train track structures.}
    \label{fig:lowerbound_inverses}
\end{figure}

Then we rewrite the domain and codomain of the rein movers and loopers as follows.
\begin{align*}
    &\cR_{a_i}^{-1}: \G_{\ov{a_i},c} \longrightarrow \G_{\ov{b_i}, a_i} &&\cR_{b_i}^{-1}: \G_{\ov{b_i},c} \longrightarrow \G_{\ov{a_{i-1}}, b_i}\\
    &\cL_{a_i}^{-1}: \G_{\ov{b_i},a_i} \longrightarrow \G_{\ov{b_i}, c}  && \cL_{b_i}^{-1}: \G_{\ov{a_{i-1}}, b_i} \longrightarrow \G_{\ov{a_{i-1}},c} \\
    &\rho^{-1}: \G_{\ov{b_0}, a_0} \longrightarrow \G_{\ov{b_{n-3}}, a_{n-3}}.
\end{align*}

Then $\calF$ becomes a train track map, making the inverse of our unfolding
sequence into a folding sequence. Following the proof of
\Cref{lem:arational_main}, we can show its limiting tree is arational with some
insertion of maps to the original unfolding sequence such that the inverses of
the inserted maps are folding maps(e.g. inserting $c \mapsto
\ov{b}_i^kcb_i^{k+1}$ to the unfolding sequence, has the desired inverse $c \mapsto
b_i^k c \ov{b}_i^{k+1},$ which is a folding map with respect to the train track
structure on the inverse folding sequence and it is the same insertion as in
Case 1 of the proof of \Cref{lem:arational_main}.) while maintaining the lower
bound for the number of ergodic measures. This concludes that the legal lamination of our unfolding sequence is arational as well. Hence we just proved:

\begin{theorem} \label{thm:arational_unfolding}
    There exists an unfolding sequence whose legal lamination is non-geometric, arational and admits $2n-6$ linearly independent ergodic currents.
\end{theorem}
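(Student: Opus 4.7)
The plan is to bootstrap off \Cref{thm:arational_folding} by treating the unfolding sequence $(G_k)_{k \le 0}$ from \Cref{thm:lowerboundFinale_unfolding} in reverse. Using the inverse maps $\cR^{-1}_{a_i}, \cR^{-1}_{b_i}, \cL^{-1}_{a_i}, \cL^{-1}_{b_i}, \rho^{-1}$ displayed just above, together with the non-standard train track structure on $\G_{e_1,e_2}$ (which couples the inverse-fold gate with the $\{c,\ov{c}\}$ gate), the reversed sequence becomes a legitimate folding sequence in the sense of \Cref{ssec:sequences}. I would then apply the strategy of \Cref{lem:arational_main} to this inverse folding sequence, inserting, for each proper free factor or maximal cyclic subgroup $H \le F_n$, extra folding maps that force the Stallings core $SH_t$ of the $H$-cover to be illegal-turn-free for all sufficiently large $t$.

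The key observation is that each insertion (e.g., $c \mapsto b_i^k c \ov{b_i}^{k+1}$ or the variant from Case 2 of \Cref{lem:arational_main}) has a well-defined inverse that is a valid unfolding move for the original sequence. Because the required multiplicities are bounded in terms of the rank of $H$, the inserted moves can be absorbed into Bob's choices in \Cref{thm:TheGame} with only a bounded perturbation of the transition matrices, and the diagonal-dominance hypothesis of \Cref{thm:klp_then_ergodic} persists. Hence the modified unfolding sequence still supports $2n-6$ linearly independent ergodic currents on its legal lamination. On the folding side, the Stallings-core control implies that the limit tree $T$ of the modified inverse folding sequence is both free (hence non-geometric) and discrete on every proper free factor, i.e., $T$ is arational and lies in the accumulation set of the modified unfolding sequence.

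Finally, to transfer arationality from a single accumulation point to the entire legal lamination, I would invoke \Cref{prop:projectionQG}: the projection of the unfolding sequence to $FF_n$ is a reparametrized quasi-geodesic, and the existence of an arational accumulation point forces it to converge to a point in $\partial FF_n$ by \cite[Theorem 1.1]{bestvina2015boundary}. Every accumulation point of the unfolding sequence then lies in the single simplex of trees equivalent to $T$, so \Cref{lem:arational_legal_lamination} combined with \Cref{lem:arational_minimal} and \Cref{lem:propsTstars} identify the diagonal closure of the legal lamination with $L(T)$, which is arational. The main obstacle will be a careful case-by-case verification that each of the three insertion scenarios in \Cref{lem:arational_main} transports through the reversal intact; in particular, checking that the folded edges really do share exactly one vertex under the prescribed train track structure on $\G_{e_1,e_2}$, and that the cumulative effect of all the bounded insertions leaves Alice's winning strategy in \Cref{thm:TheGame} intact.
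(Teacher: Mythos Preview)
Your proposal is correct and follows essentially the same approach as the paper: reverse the unfolding sequence using the train track structures $\G_{e_1,e_2}$ to obtain a bona fide folding sequence, run the Stallings-core argument of \Cref{lem:arational_main} with bounded insertions whose inverses are valid unfolding moves absorbed into Bob's play in \Cref{thm:TheGame}, and then use the free factor complex projection to pass from a single arational accumulation point to arationality of the legal lamination. The paper's write-up is somewhat terser on the final step (it cites only \cite[Theorem 1.1]{bestvina2015boundary} and the single-simplex conclusion), whereas you additionally spell out the role of \Cref{lem:arational_legal_lamination}, \Cref{lem:arational_minimal}, and \Cref{lem:propsTstars}, but the argument is the same.
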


\bibliography{bib}
\bibliographystyle{plain}
\end{document}